\documentclass[11pt]{article}
\RequirePackage[OT1]{fontenc}
\RequirePackage{amsthm,amsmath,natbib}
\RequirePackage[colorlinks]{hyperref}
\RequirePackage{hypernat}
\usepackage{amsfonts}
\usepackage{amssymb}
\usepackage{amstext}
\usepackage{parskip}
\usepackage{fullpage}
\bibpunct{(}{)}{;}{a}{,}{,}

\bibliographystyle{ims}
\usepackage{times}
\usepackage{graphicx}
\usepackage{epsfig}


\newcommand{\R}{\ensuremath{\mathbb R}}

\newcommand{\Sc}{\ensuremath{S^c}}

\newcommand{\prob}[1]{\ensuremath{{\mathbb P}\left(#1\right)}}

\newcommand{\expct}[1]{\ensuremath{{\mathbb E}#1}}

\newcommand{\size}[1]{\ensuremath{\left|#1\right|}}

\newcommand{\argmin}{\operatorname{argmin}}

\newcommand{\e}{\epsilon}

\newcommand{\silent}[1]{}

\newcommand{\ip}[1]{\;\langle{\,#1\,}\rangle\;}

\newcommand{\T}{{\mathcal T}}

\newcommand{\Id}{\ensuremath{\textsf{Id}}}

\newcommand{\basepen}{\ensuremath{\lambda_{\sigma,a,p}}}

\newcommand{\lars}{\textsf{LARS}}

\newcommand{\init}{\text{\rm init}}
\newcommand{\drop}{{\mathcal D}}
\newcommand{\dropS}{\ensuremath{{S_{\drop}}}}

\newcommand{\inv}[1]{\frac{1}{#1}}
\newcommand{\abs}[1]{\left\lvert#1\right\rvert}
\newcommand{\twonorm}[1]{\left\lVert#1\right\rVert_2}

\newcommand{\shtwonorm}[1]{\lVert#1\rVert_2}
\newcommand{\norm}[1]{\left\lVert#1\right\rVert}

\newcommand{\bens}{\begin{eqnarray*}}
\newcommand{\eens}{\end{eqnarray*}}
\newcommand{\ben}{\begin{eqnarray}}
\newcommand{\een}{\end{eqnarray}}

\newcommand{\beq}{\begin{equation}}
\newcommand{\eeq}{\end{equation}}

\def\E{{\textbf{E}}}
\def\supp{\mathop{\text{\rm supp}\kern.2ex}}
\def\argmin{\mathop{\text{arg\,min}\kern.2ex}}

\let\hat\widehat
\let\tilde\widetilde


\numberwithin{equation}{section}
\theoremstyle{plain}
\newtheorem{theorem}{Theorem}[section]
\newtheorem{assumption}{Assumption}[section]
\newtheorem{lemma}[theorem]{Lemma}
\newtheorem{proposition}[theorem]{Proposition}

\newtheorem{remark}[theorem]{Remark}
\newtheorem{corollary}[theorem]{Corollary}

{\end{eqnarray}\end{subequations}\hskip-4.0pt}%
\setlength{\parskip}{8pt}
\setlength{\parindent}{0pt}
\newenvironment{proofof}[1]{\hspace*{20pt}{\it Proof}{ of #1}.\hskip10pt}{\qed\vskip5pt}
\newenvironment{proofof2}{\hskip10pt}{\qed\vskip5pt}

\begin{document}

\title{Thresholded Lasso for high dimensional variable selection and
statistical estimation \footnote{
A preliminary version of this paper with title:
Thresholding Procedures for High Dimensional Variable Selection and
Statistical Estimation, has appeared in 
Proceedings of Advances in Neural Information Processing Systems 22, (NIPS 2009).
This research was supported by the Swiss National Science Foundation (SNF)
Grant 20PA21-120050/1. 
}
}

\author{Shuheng Zhou \\
  \vspace{-.3cm}\\
Seminar f\"{u}r Statistik, Department of Mathematics, ETH Z\"{u}rich, CH-8092, Switzerland
}

\date{Feburary 8, 2010} 

\maketitle

\begin{abstract}
\noindent 
Given $n$ noisy samples with $p$ dimensions, where $n \ll p$, we show 
that the multi-step thresholding procedure based on the 
Lasso -- we call it the {\it Thresholded Lasso}, can accurately estimate a 
sparse vector $\beta \in \R^p$ in a linear model $Y = X \beta + \epsilon$, 
where $X_{n \times p}$ is a design matrix normalized to have column 
$\ell_2$ norm $\sqrt{n}$, and $\epsilon \sim N(0, \sigma^2 I_n)$.
We show that under the restricted eigenvalue (RE) condition 
(Bickel-Ritov-Tsybakov 09), it is possible to achieve the $\ell_2$ loss 
within a logarithmic factor of the ideal mean square error one would 
achieve with an {\em oracle }  while selecting a sufficiently sparse model -- 
hence achieving {\it sparse oracle inequalities}; the oracle would supply perfect 
information about which coordinates are non-zero and which are above the 
noise level. In some sense, the Thresholded Lasso recovers the choices 
that would have been made by the $\ell_0$ penalized least squares 
estimators, in that it selects a sufficiently sparse model without sacrificing 
the accuracy in estimating $\beta$ and in predicting $X \beta$. We also 
show for the Gauss-Dantzig selector (Cand\`{e}s-Tao 07), if $X$ obeys a
uniform uncertainty principle and if the true parameter is sufficiently sparse,
one will achieve the sparse oracle inequalities as above, while
allowing at most $s_0$ irrelevant variables in the model  in the worst case,
where $s_0 \leq s$ is the smallest integer such that for $\lambda = \sqrt{2 \log
p/n}$, $\sum_{i=1}^p \min(\beta_i^2, \lambda^2 \sigma^2) \leq s_0 \lambda^2
\sigma^2$.  Our simulation results on the Thresholded Lasso match our
theoretical analysis excellently.
\end{abstract}

{\bf Keyword.}
Linear regression,
Lasso,
Gauss-Dantzig Selector,
$\ell_1$ regularization,
$\ell_0$ penalty,
multiple-step procedure,
ideal model selection,
oracle inequalities,
restricted orthonormality,
statistical estimation,
thresholding,
linear sparsity,
random matrices

\section{Introduction}
\label{sec:introduction}
In a typical high dimensional setting, the number of variables $p$ is much 
larger than the number of observations $n$. This challenging setting appears
in linear regression, signal recovery, covariance selection in graphical 
modeling, and sparse approximations.
In this paper, we consider recovering $\beta \in \R^p$ in the following 
linear model:
\beq
\label{eq::linear-model}
Y = X \beta + \epsilon,
\eeq
where $X$ is an $n \times p$ design matrix, $Y$ is a vector
of noisy observations and $\epsilon$ is the noise term. 
We assume throughout this paper that $p \ge n$ (i.e. high-dimensional), 
$\epsilon \sim N(0, \sigma^2 I_n)$, and the columns of $X$ are 
normalized to have $\ell_2$ norm $\sqrt{n}$.
Given such a linear model, two key tasks are to identify the relevant
set of variables and to estimate $\beta$ with bounded $\ell_2$ loss.
In particular, recovery of the sparsity pattern 
$S = \supp(\beta) := \left\{j \,:\, \beta_j \neq 0\right\}$, 
also known as variable (model) selection, 
refers to the task of correctly identifying the support set
(or a subset of ``significant'' coefficients in $\beta$) 
based on the noisy observations.

Even in the noiseless case, recovering $\beta$ (or its support) 
from $(X, Y)$ seems impossible when $n \ll p$. 
However, a line of recent research shows that
when $\beta$ is sparse: when it has a relatively small number of 
nonzero coefficients and when the design matrix $X$ is also sufficiently 
nice,  it becomes possible~\cite{Donoho:cs,CRT06,CT05,CT06}.
One important stream of research,  which we also adopt here, requires 
computational feasibility for the estimation methods, among which the 
Lasso and the Dantzig selector are both well studied and shown with 
provable nice statistical properties; see 
for example~\cite{MB06,GR04,Wai08,ZY07,CT07,vandeG08,MY09,BRT09,RWL08}.
For a chosen penalization parameter $\lambda_n \geq 0$, regularized 
estimation with the $\ell_1$-norm penalty, 
also known as the Lasso \citep{Tib96} or 
Basis Pursuit~\citep{Chen:Dono:Saun:1998} 
refers to the following convex optimization problem  
\begin{eqnarray}
\label{eq::origin} \; \; 
\hat \beta = \arg\min_{\beta} \frac{1}{2n}\|Y-X\beta\|_2^2 + 
\lambda_n \|\beta\|_1,
\end{eqnarray}
where the scaling factor $1/(2n)$ is chosen by convenience; The Dantzig 
selector~\citep{CT07} is defined as,
\begin{gather}
\label{eq::DS-func}
(DS) \; \; \arg\min_{\hat{\beta} \in \R^p} \norm{\hat{\beta}}_1
 \;\; \text{subject to} \;\; 
\norm{\inv{n} X^T (Y - X \hat{\beta})}_{\infty} \leq \lambda_n.
\end{gather}
Our goal in this work is to recover $S$ 
as accurately as possible: we wish to obtain $\hat{\beta}$ such that 
$|\supp(\hat{\beta}) \setminus S|$ 
(and sometimes $|S \triangle \supp(\hat{\beta})|$ also) is small, 
with high probability, while at the same time 
$\shtwonorm{\hat{\beta} - \beta}^2$ is bounded within logarithmic factor of 
the ideal mean square error one would achieve with an oracle which would 
supply perfect information about which coordinates are non-zero and which 
are above the noise level (hence achieving the {\it oracle inequality} as 
studied in~\cite{Donoho:94,CT07}); We deem the bound on $\ell_2$-loss 
as a natural criteria for evaluating a sparse model when it is not exactly $S$.
Let {\bf $s = |S|$}.

Given $T \subseteq \{1, \ldots, p\}$, 
let us define $X_T$ as the $n \times |T|$ 
submatrix obtained by extracting columns of $X$ indexed by $T$; similarly, 
let $\beta_T \in \R^{|T|}$, be a subvector of $\beta \in \R^p$ confined to $T$.
Formally, we propose and study a {\bf Multi-step Procedure}: 
First we obtain an initial estimator $\beta_{\init}$ using the Lasso as 
in~\eqref{eq::origin} or the Dantzig selector as in~\eqref{eq::DS-func}, 
with $\lambda_n = d \sigma \sqrt{2\log p/n}$, for some constant $d > 0$.
\begin{enumerate}
\item
We then threshold the estimator $\beta_{\init}$ with $t_0$,
with the general goal such that, we get a set $I_1$ with cardinality at 
most $2s$; in general, we also have $|I_1 \cup S| \leq 2s$, where 
$I_1 =  \left\{j \in \{1, \ldots, p\}: \beta_{j, \init} \geq t_0 \right\}$ 
for some $t_0$ to be specified. Set $I = I_1$.
\item
We then feed $(Y, X_{I})$ to either the Lasso estimator 
as in~\eqref{eq::origin} or the ordinary least squares (OLS) estimator to
obtain $\hat{\beta}$, where we set
$\hat\beta_{I} = (X_I^T X_{I})^{-1} X_{I}^T Y$ and 
$\hat{\beta}_{I^c} = 0$.
\item
Possibly threshold $\hat{\beta}_{I_1}$ with 
$t_1 = 4 \lambda_{n} \sqrt{|I_1|}$ to obtain $I_2$, and
repeat step~$2$ with $I = I_2$ to obtain $\hat\beta_{I}$; set other 
coordinates to zero and return $\hat{\beta}$.
\end{enumerate}
Our algorithm is constructive in that it relies neither on the unknown 
parameters $s$ and {\bf $\beta_{\min} := \min_{j \in S} |\beta_{j}|$},
nor the exact knowledge of those that characterize the incoherence 
conditions on $X$; instead, our choice of $\lambda_n$ and thresholding 
parameters only depends on $\sigma, n$, and $p$, and some crude 
estimation of certain parameters, which we will explain in later 
sections.
In our experiments, we apply only the first two steps with the Lasso as an 
initial estimator, which we refer to as the {\it Thresholded Lasso} estimator; 
the Gauss-Dantzig selector is a two-step procedure with the Dantzig selector 
as $\beta_{\init}$~\cite{CT07}. We apply the third step only when 
$\beta_{\min}$ is sufficiently large, so as to get a very sparse model 
$I \supset S$ (cf. Theorem~\ref{THM:RE}).
We now formally define some incoherence conditions in 
Section~\ref{sec:cond-intro} and elaborate on our goals in
Section~\ref{sec:oracle-intro}, where we also outline the rest of this section.

\subsection{Incoherence conditions}
\label{sec:cond-intro}
For a matrix $A$, let $\Lambda_{\min}(A)$ and $\Lambda_{\max}(A)$
denote the smallest and the largest eigenvalues respectively. 
We refer to a vector $\upsilon \in \R^{p}$ with at most 
$s$ non-zero entries, where $s \leq p$, as a {\bf $s$-sparse} vector.
Occasionally, we use $\beta_T \in \R^{|T|}$, where  
$T \subseteq \{1, \ldots, p\}$, 
to also represent its $0$-extended version $\beta' \in \R^p$
such that $\beta'_{T^c} = 0$ and $\beta'_{\T} = \beta_T$; 
for example in~\eqref{eq::beta-diamond} below.
We assume
\beq
\label{eq::eigen-admissible-s}
\Lambda_{\min}(2s) \;
\stackrel{\triangle}{=} \;
\min_{\upsilon \not= 0; 2s-\text{sparse}} \; \;
\frac{\twonorm{X \upsilon}^2}{n \twonorm{\upsilon}^2} > 0,
\eeq
where $n \geq 2s$ is necessary, as any submatrix with more than $n$ 
columns must be singular. In general, we also assume that 
\ben
\label{eq::eigen-max}
\Lambda_{\max}(2s) \; \stackrel{\triangle}{=} \;
\max_{\upsilon \not=0; 2s-\text{sparse}} \;
\frac{\twonorm{X \upsilon}^2}{n \twonorm{\upsilon}^2} < \infty.
\een
\cite{CT05} define the $s$-{\em restricted isometry constant} 
$\delta_s$ of $X$ to be the smallest quantity such that
for all $T \subseteq \{1,\ldots, p\}$ with $|T| \leq s$ and coefficients 
sequences $(\upsilon_j)_{j \in T}$, it holds that
\ben
\label{eq::RIP-define}
(1 - \delta_s) \twonorm{\upsilon}^2 \leq \twonorm{X_T \upsilon}^2/n \leq 
(1 + \delta_s) \twonorm{\upsilon}^2;
\een
The $(s, s')$-{\em restricted orthogonality constant} $\theta_{s, s'}$ is the 
smallest quantity such that for all disjoint sets $T,T' \subseteq \{1, \ldots, p\}$ of cardinality $|T| \leq s$ and $|T'| \leq s'$,  
\ben
\label{label:correlation-coefficient}
\frac{\abs{{\ip{X_T c, X_{T'} {c'}}}}}{n} 
\leq \theta_{s, s'} \twonorm{c} \twonorm{c'}
\een
holds, where $s + s' \leq p$. 
Note that $\theta_{s, s'}$ and $\delta_s$ are non-decreasing in $s, s'$ and 
small values of $\theta_{s, s'}$ indicate that disjoint subsets 
covariates in $X_T$ and $X_{T'}$ span nearly orthogonal subspaces 
(See Lemma~\ref{lemma:parallel} for a general bound on $\theta_{s,s'}$.)
For $\delta_s$,  it holds that
$1 - \delta_s \leq \Lambda_{\min}(s) \leq \Lambda_{\max}(s) \leq 1 + \delta_s$.
Hence $\delta_{2s} < 1$ implies that 
condition~\eqref{eq::eigen-admissible-s} holds.
As a consequence of these definitions, for any subset $I$, we have
\begin{eqnarray}
\label{eq::eigen-cond}
& & 
\Lambda_{\max}(|I|) 
\geq \Lambda_{\max}  \left({X_I^T X_I}/{n}\right) \geq 
\Lambda_{\min}  \left({X_I^T X_I}/{n}\right) \geq \Lambda_{\min}(|I|)
\end{eqnarray}
where $\Lambda_{\min}(|I|)  \geq \Lambda_{\min}(2s) > 0$ and 
$\Lambda_{\max}(|I|)  \leq \Lambda_{\max}(2s)$ for $|I| \leq 2s$.
We next introduce some conditions on the design, namely,
the Restricted Eigenvalue (RE) condition by~\cite{BRT09} and
the Uniform Uncertainly Principle by~\cite{CT07}
which we use throughout this paper.
\begin{assumption}
\textnormal{(\bf{Restricted Eigenvalue Condition $RE(s, k_0, X)$}
~\citep{BRT09})} 
\label{def:BRT-cond}
For some integer $1\leq s \leq p$ and a number $k_0 >0$, 
it holds for all $\upsilon \not=0$,
\beq
\label{eq::admissible}
\inv{K(s, k_0)} \stackrel{\triangle}{=}
\min_{\stackrel{J_0 \subseteq \{1, \ldots, p\},}{|J_0| \leq s}}
\min_{\norm{\upsilon_{J_0^c}}_1 \leq k_0 \norm{\upsilon_{J_0}}_1}
\; \;  \frac{\norm{X \upsilon}_2}{\sqrt{n}\norm{\upsilon_{J_0}}_2} > 0.
\eeq
\end{assumption}
\begin{assumption}
\textnormal{(\bf{A Uniform Uncertainly Principle})~\citep{CT07}} 
\label{def:CT-cond}
For some integer $1 \leq s < n/3$, assume $\delta_{2s} + \theta_{s, 2s} < 1$,
which implies that $\lambda_{\min}(2s) > \theta_{s, 2s}$ 
given that $1 - \delta_{2s} \leq \Lambda_{\min}(2s)$.
\end{assumption}
If $RE(s, k_0, X)$ is  satisfied with $k_0 \geq 1$,
then~\eqref{eq::eigen-admissible-s} must hold; 
Bounds on prediction loss and $\ell_p$ loss, 
where $1 \leq p \leq 2$, for estimating the parameters are derived for both 
the Lasso and the Dantzig selector in both linear and nonparametric 
regression models;  see~\cite{BRT09}.
We now define oracle inequalities in terms of $\ell_2$ loss as 
explored in~\cite{CT07}, where  they show such inequalities hold for
the Dantzig selector under the UUP (cf. Proposition~\ref{prop:DS-oracle}).
\subsection{Oracle inequalities}
\label{sec:oracle-intro}
Consider the least squares estimators 
$\hat{\beta}_I = (X_I^T X_{I})^{-1} X_{I}^T Y$, where $|I| \leq s$.
Consider the {\it ideal} least-squares estimator $\beta^{\diamond}$
\begin{eqnarray}
\label{eq::beta-diamond}
& & \beta^{\diamond} = \argmin_{I \subseteq \{1, \ldots, p\},\; |I| \leq s} 
\E \twonorm{\beta- \hat{\beta}_I}^2
\end{eqnarray}
which minimizes the expected mean squared error.
It follows from~\cite{CT07} that for $\Lambda_{\max}(s) < \infty$ 
\begin{eqnarray}
\label{eq::ME-diamond}
\E \twonorm{\beta- \beta^{\diamond}}^2 \geq 
\min \left(1, {1}/{\Lambda_{\max}(s)} \right)
\sum_{i=1}^p \min(\beta_i^2, \sigma^2/n).
\end{eqnarray}
Now we check if for $\Lambda_{\max}(s) < \infty$, 
it holds with high probability that
\begin{eqnarray}
\label{eq::log-MSE}
\shtwonorm{\hat{\beta} - \beta}^2 & = &  
O(\log p) \sum_{i=1}^p \min(\beta_i^2, \sigma^2/n), \; \text{ so that }\\
\label{eq::log-MSE-E}
\shtwonorm{\hat{\beta} - \beta}^2 
& = & O(\log p) \max(1, \Lambda_{\max}(s))
\E\twonorm{\beta^{\diamond}- \beta}^2
\end{eqnarray}
holds in view of ~\eqref{eq::ME-diamond}.
These bounds are meaningful since
$$\sum_{i=1}^p \min(\beta_i^2, \sigma^2/n) = \min_{I \subseteq \{1, \ldots, p\}} 
\twonorm{\beta - \beta_{I}}^2 + \frac{|I| \sigma^2}{n}$$
represents the squared bias and variance.
Define $s_0$ as the smallest integer such that 
\ben
\label{eq::define-s0}
\sum_{i=1}^p \min(\beta_i^2, \lambda^2 \sigma^2) \leq 
s_0 \lambda^2 \sigma^2, \text{ where } \; \lambda = \sqrt{ 2 \log p/n}.
\een
A consequence of this definition is: $|\beta_j| < \lambda \sigma$ for all 
$j > s_0$, if we order $|\beta_1| \geq |\beta_2| ... \geq |\beta_p|$
(cf.~\eqref{eq::beta-2-small}). 
We define a quantity $\basepen$ for each 
$a >0$, by which we bound the 
maximum correlation between the noise and covariates of $X$, which 
we only apply to $X$ with column $\ell_2$ norm bounded by $\sqrt{n}$; 
For each $a \geq 0$, let
\ben
\label{eq::low-noise}
&  &
\; \; \; \; {\T_a} := 
\biggl \{\e: \norm{{X^T \e}/{n}}_{\infty} \leq \basepen, 
\text{ where }
\basepen = \sigma \sqrt{1 + a} \sqrt{{2\log p}/{n}}\biggr \},
\een
we have (see~\cite{CT07}) $\prob{\T_a}  \geq 1 - (\sqrt{\pi \log p} p^a)^{-1}.$ 

The main theme of our paper is to explore oracle inequalities of 
the thresholding procedures under conditions as described above.
For the Lasso estimator and the Dantzig selector,
under the sparsity constraint, such oracle results have been 
obtained in a line of recent work for either the prediction error or the 
$\ell_p$ loss,  where $1 \leq p \leq 2$; see for example~\cite{BTW07a, BTW07b, BTW07c,CT07,CP09,BRT09,CWX09,Kol09,Kol07,vandeG08,ZH08,Zhang09,GB09,GBZ10} under  conditions stated above, or other variants.

Along  this line, we prove new results for both the Lasso as an initial 
estimator and for the thresholded estimators.
In Section~\ref{sec:exact-intro} and~\ref{sec:sparse-oracle},
we show oracle results for the Thresholded Lasso and the Gauss-Dantzig 
selector in terms of achieving the {\em sparse oracle inequalities} which we 
shall formally define in Section~\ref{sec:sparse-oracle}.
While the focus  of the present paper is on variable selection and oracle 
inequalities in terms of $\ell_2$ loss, prediction errors are also explicitly 
derived in Section~\ref{sec:intro-pred}; there we introduce the oracle 
inequalities in terms of prediction error and show a natural interpretation 
for the Thresholded Lasso estimator when relating to the $\ell_0$ penalized 
least squares estimators, in particular, ones that have been studied  
by~\cite{FG94}; see also~\cite{Barr:Birg:Mass:1999,BM97,BM01} for 
subsequent developments.  In Section~\ref{sec:type-II-intro}, 
we discuss recovery of a subset of strong signals.

\subsection{Variable selection under the RE condition} 
\label{sec:exact-intro}
Our first result in Theorem~\ref{THM:RE} shows 
that consistent variable selection is possible under the RE condition.
We do not impose any extra constraint on $s$ besides what is 
allowed in order for~\eqref{eq::admissible} to hold. 
Note that when 
$s > n/2$, it is impossible for the restricted eigenvalue assumption to 
hold as $X_{I}$ for any $I$ such that $|I| = 2s$ becomes singular in this case. 
Hence our algorithm is especially relevant if one would like 
to estimate a parameter $\beta$ such that $s$ is very close to $n$;
See Section~\ref{sec:linear-sparsity} for such examples.
Our analysis builds upon the rate of convergence bounds for $\beta_{\init}$
derived in~\cite{BRT09}. 
The first implication of this work and also one of the motivations for  analyzing 
the thresholding methods is: under Assumption~\ref{def:BRT-cond}, 
one can obtain consistent variable selection for very significant values of 
$s$, if only a few extra variables are allowed to be included 
in the estimator $\hat{\beta}$. Note that we did not optimize the lower bound 
on $s$ as we focus on cases when the support $S$ is large.
\begin{theorem}
\label{THM:RE}
Suppose that $RE(s, k_0, X)$ holds with $K(s, k_0)$, where $k_0 = 1$ for 
the Dantzig selector and $= 3$ for the Lasso.
Suppose $\lambda_n \geq f \basepen$ for $\basepen$ as 
in~\eqref{eq::low-noise}, where $f = 1$ for the Dantzig selector,
and $= 2$ for the Lasso. Let $s \geq K^4(s, k_0)$. 
Suppose $\beta_{\min} := \min_{j \in S} |\beta_{j}| \geq B_4 \lambda_n \sqrt{s},$
where $B_4 = 4 \sqrt{2} \max(K(s, k_0), 1) + \max\left(4 K^2(s, k_0), 
{\sqrt{2}}/{f\Lambda_{\min}(2s)} \right)$. 
Then on $\T_a$, the multi-step procedure returns $\hat{\beta}$ such that
for $B_3 = (1+a) (1+  1/(16 f^2\Lambda_{\min}^2(2s)))$,
\bens
& & 
S \subseteq I := \supp(\hat{\beta}), \; \text{ where } \;
|I \setminus S| <  1/(16 f^2\Lambda_{\min}^2(2s)) \; \; \; \text {and } \\
\label{eq::meaningful-bound}
& & 
\shtwonorm{\hat{\beta} - \beta}^2 \; \leq \;  
{\basepen^2 |I|}/{\Lambda_{\min}^2(|I|) } \; \leq \;
B_3 (2\log p/ n) s \sigma^2 /(\Lambda_{\min}^2(|I|)).
\eens
\end{theorem}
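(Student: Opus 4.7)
The plan is to run the three steps of the procedure in sequence, converting at each stage an $\ell_2$ bound on the current estimate into a support-level statement via a Markov-type counting inequality, and then checking that the numerical constants collapse into the stated $B_3$ and $B_4$. The reusable device is: if an estimator $\beta^\ast$ satisfies $\shtwonorm{\beta^\ast - \beta} \leq r$, then a threshold $t$ applied to $|\beta^\ast|$ retains all of $S$ as soon as $\beta_{\min} - t \geq \norm{\beta^\ast-\beta}_\infty$, while the number of false positives is at most $r^2/t^2$ (since every $j\notin S$ surviving the threshold contributes at least $t^2$ to $r^2$).

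\textbf{Step 1 (threshold $\beta_{\init}$).} The $RE(s,k_0,X)$ condition together with $\lambda_n \geq f\basepen$ gives, on $\T_a$, the BRT $\ell_2$ rate (from~\cite{BRT09}) of the form $\shtwonorm{\beta_{\init}-\beta}\leq C_0 K^2(s,k_0)\lambda_n\sqrt{s}$ with $C_0$ depending only on $k_0$. Choose $t_0\asymp C_0 K^2(s,k_0)\lambda_n$. The counting inequality gives $|I_1\setminus S|\leq s$, and the crude bound $\norm{\beta_{\init}-\beta}_\infty\leq\shtwonorm{\beta_{\init}-\beta}$ together with the lower bound $\beta_{\min}\geq B_4\lambda_n\sqrt{s}$ (using the $4\sqrt{2}\max(K,1)+4K^2$ term and the hypothesis $s\geq K^4(s,k_0)$ to absorb the $t_0$ contribution against $\lambda_n\sqrt{s}$) delivers $S\subseteq I_1$. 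In particular $|I_1|\leq 2s$.

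\textbf{Step 2 (OLS on $I_1$, then second threshold).} Since $|I_1|\leq 2s$, the Gram matrix $X_{I_1}^T X_{I_1}/n$ has smallest eigenvalue at least $\Lambda_{\min}(2s)>0$, and since $S\subseteq I_1$ the model $Y=X_{I_1}\beta_{I_1}+\e$ is exact. The OLS refit $\tilde\beta$ on $I_1$ therefore obeys
\[
\shtwonorm{\tilde\beta-\beta} \leq \frac{\shtwonorm{X_{I_1}^T\e/n}}{\Lambda_{\min}(|I_1|)} \leq \frac{\sqrt{|I_1|}\,\basepen}{\Lambda_{\min}(|I_1|)} \leq \frac{\sqrt{2s}\,\basepen}{\Lambda_{\min}(2s)},
\]
using $\norm{X^T\e/n}_\infty\leq\basepen$ on $\T_a$. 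Applying the second threshold $t_1=4\lambda_n\sqrt{|I_1|}$, the counting inequality gives
\[
|I_2\setminus S| \leq \frac{\shtwonorm{\tilde\beta-\beta}^2}{t_1^2} \leq \frac{|I_1|\,\basepen^2/\Lambda_{\min}^2(2s)}{16\lambda_n^2|I_1|} \leq \frac{1}{16 f^2\Lambda_{\min}^2(2s)},
\]
which is precisely the advertised false-positive count. Requiring $\beta_{\min}-t_1\geq\norm{\tilde\beta-\beta}_\infty$ yields $\beta_{\min}\geq \lambda_n\sqrt{s}\bigl(4\sqrt{2}+\sqrt{2}/(f\Lambda_{\min}(2s))\bigr)$, which matches the second term in the definition of $B_4$, so $S\subseteq I_2$.

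\textbf{Step 3 and bookkeeping.} Set $I:=I_2$. Repeating the OLS computation of Step 2 on $X_I$ (still satisfying $S\subseteq I$ and $|I|\leq 2s$) produces the final $\hat\beta$ with $\shtwonorm{\hat\beta-\beta}^2\leq|I|\basepen^2/\Lambda_{\min}^2(|I|)$. Substituting $|I|\leq s+1/(16f^2\Lambda_{\min}^2(2s))\leq s\bigl(1+1/(16f^2\Lambda_{\min}^2(2s))\bigr)$ and $\basepen^2=(1+a)\sigma^2\cdot 2\log p/n$ produces the stated bound with $B_3=(1+a)\bigl(1+1/(16f^2\Lambda_{\min}^2(2s))\bigr)$. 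The real work is bookkeeping rather than technique: two genuinely different lower bounds on $\beta_{\min}$ arise — one from thresholding $\beta_{\init}$ at scale $K^2\lambda_n$, the other from thresholding $\tilde\beta$ at scale $\lambda_n\sqrt{|I_1|}$ against OLS noise of size $\sqrt{|I_1|}\basepen/\Lambda_{\min}(2s)$ — and these must be merged into the single expression $B_4$ while handling the Lasso ($k_0=3$, $f=2$) and Dantzig ($k_0=1$, $f=1$) cases uniformly through $K(s,k_0)$. The condition $s\geq K^4(s,k_0)$ is the exact slack needed to swallow the $t_0$ term into the dominant $\lambda_n\sqrt{s}$ scale.
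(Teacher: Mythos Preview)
Your proposal is essentially correct and follows the same architecture as the paper: invoke the BRT rate for $\beta_{\init}$, threshold to obtain a set of size $\leq 2s$ containing $S$, run OLS to get the clean $\sqrt{|I_1|}\,\basepen/\Lambda_{\min}$ error, threshold again at $4\lambda_n\sqrt{|I_1|}$ to push the false-positive count down to $1/(16f^2\Lambda_{\min}^2(2s))$, and finish with a second OLS. Your Step~2 computation is exactly the paper's (see the bound on $|\hat S_2\setminus S|$ in the proof of Theorem~\ref{THM:GENERIC}).

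Two small deviations from the paper are worth noting. First, the paper separates the BRT input into an $\ell_2$ bound on $S$ and an $\ell_1$ bound on $S^c$, namely $\norm{\upsilon_{\init,S^c}}_1\leq B_1\lambda_n s$, and counts first-step false positives via $\norm{\cdot}_1/t$ rather than your $\norm{\cdot}_2^2/t^2$; the role of $s\geq K^4(s,k_0)$ in the paper is to make $|\hat S_1\setminus S|\leq B_1\sqrt{s}/4<s$ (an $\ell_1$ step), whereas in your $\ell_2$ version the same hypothesis is used, as you say, to absorb $t_0\asymp K^2\lambda_n$ into the $\lambda_n\sqrt{s}$ scale of $\beta_{\min}$. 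Second, the paper's actual procedure sets the first threshold adaptively via a preliminary count $\hat S_0=\{j:\beta_{j,\init}>4\lambda_n\}$ and then $t_0=4\lambda_n\sqrt{|\hat S_0|}$, which avoids any dependence on the unknown $K(s,k_0)$; your fixed $t_0\asymp K^2\lambda_n$ proves the same theorem but for a slightly different (oracle-flavoured) threshold rule. Neither point affects the mathematics of the bound, and the constants collapse to the same $B_3,B_4$.
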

In Section~\ref{sec:experiments}, our simulation results using the 
Thresholded Lasso show that the exact recovery rate of the support
is very high for a few types of random matrices once the number of samples 
passes a certain threshold. 
We note that the oracle inequality as in~\eqref{eq::log-MSE}
is also achieved given that $\beta_{\min} \geq \sigma/\sqrt{n};\; \text{ hence } \; 
\sum_{i=1}^p \min(\beta_i^2, \sigma^2/n) = s \sigma^2/n.$
We next extend {\it model selection consistency}  beyond the notion of 
exact recovery of the support set $S$ as we introduced earlier, which has
been considered in~\cite{MB06,ZY07,Wai08}; Instead of having to make 
strong assumptions on either the signal strength, for example, on 
$\beta_{\min}$, or the incoherence conditions (or both), we focus on 
defining a meaningful criteria for  
{\it model selection consistency}  when both are relatively weak.

\subsection{Thresholding that achieves sparse oracle inequalities} 
\label{sec:sparse-oracle}
The natural question upon obtaining Theorem~\ref{THM:RE}
is: is there a good thresholding rule that enables us to obtain a
sufficiently {\it sparse} estimator $\hat{\beta}$ which 
satisfies the {\it oracle inequality} as in~\eqref{eq::log-MSE},
when some components of $\beta_S$  (and hence $\beta_{\min}$) are well
 below $\sigma/\sqrt{n}$?
Theorem~\ref{thm:ideal-MSE-prelude} answers this question positively: 
under a uniform uncertainty principle (UUP), thresholding of an initial 
Dantzig selector $\beta_{\init}$ at the level of $C_1 \sqrt{2 \log p/n} \sigma$ 
for some constant $C_1$, identifies a sparse model $I$  of cardinality at most 
$2 s_0$ such that its corresponding least-squares estimator $\hat{\beta}$ 
based on the model $I$ 
achieves the oracle inequality as in~\eqref{eq::log-MSE}.
This is accomplished without any knowledge of the 
significant coordinates or parameter values of $\beta$.
Theorem~\ref{thm:RE-oracle-main} shows that exactly the same type of 
sparse oracle inequalities hold for the Thresholded Lasso under the 
RE condition, which is both surprising but also mostly anticipated; 
this is also the key contribution of this paper. 
For simplicity, we always aim to bound 
$|I| < 2 s_0$ while achieving the oracle inequality 
as in~\eqref{eq::log-MSE}; 
One could aim to bound $|I| < c s_0$ for some other constant $c > 0$.
We refer to estimators that satisfy both constraints as estimators that 
achieve the {\it sparse oracle inequalities}.
Moreover, we note that thresholding of an initial estimator 
$\beta_{\init}$ which achieves $\ell_2$ loss as in~\eqref{eq::log-MSE} 
at the level of  $c_1 \sigma \sqrt{2 \log p/n}$ for some constant $c_1 > 0$, 
will always select nearly the best subset of variables in the spirit of 
Theorem~\ref{thm:ideal-MSE-prelude} and~\ref{thm:RE-oracle-main}; 
Formal statements of such results are omitted.
\begin{theorem}\textnormal{(\bf{Variable selection under UUP})}
\label{thm:ideal-MSE-prelude}
Choose $\tau, a > 0$ and set $\lambda_n = \lambda_{p, \tau} \sigma$,
where $\lambda_{p,\tau} := (\sqrt{1+a} + \tau^{-1}) \sqrt{2 \log p/n}$,
in~\eqref{eq::DS-func}.
Suppose $\beta$ is $s$-sparse with $\delta_{2s} + \theta_{s,2s} < 1 - \tau$.
Let threshold $t_0$ be chosen from the range 
$(C_1\lambda_{p,\tau} \sigma, C_4 \lambda_{p, \tau} \sigma]$ for some
constants $C_1, C_4$ to be defined.
Then with probability at least
$1 - (\sqrt{\pi \log p} p^a)^{-1}$, the 
Gauss-Dantzig selector $\hat{\beta}$ selects a model 
$I := \supp(\hat{\beta})$ such that
we have 
\ben
\label{eq::ideal-size}
|I| & \leq &  2s_0 \; \text{ and } \; |I \setminus S| \leq s_0 \leq s \;
\text{ and } \\ 
\label{eq::ideal-MSE}
\shtwonorm{\hat\beta - \beta}^2 & \leq & 
2 C_3^2 \log p
\left(\sigma^2/n + \sum_{i=1}^p \min(\beta_i^2, \sigma^2/n)\right)
\een
where $C_1$ is defined in~\eqref{eq::DS-constants-1} and 
$C_3$ depends on $a, \tau$, $\delta_{2s}$, $\theta_{s, 2s}$ and $C_4$; see~\eqref{eq::DS-constants-3}.
\end{theorem}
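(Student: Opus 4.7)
The plan is to combine the Dantzig-selector oracle inequality (Proposition~\ref{prop:DS-oracle}) with a Markov-type thresholding argument to control the selected support $I$, and then analyze the OLS refinement on $X_I$ using UUP. First, I would condition on the event $\T_a$, which has probability at least $1 - (\sqrt{\pi \log p}\, p^a)^{-1}$, and invoke Proposition~\ref{prop:DS-oracle} with $\lambda_n = \lambda_{p,\tau}\sigma$ to get a bound of the form $\shtwonorm{\beta_{\init} - \beta}^2 \leq C_0^2 (2\log p)\bigl[\sigma^2/n + \sum_{i=1}^p \min(\beta_i^2,\sigma^2/n)\bigr]$ for some constant $C_0 = C_0(a,\tau,\delta_{2s},\theta_{s,2s})$. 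By~\eqref{eq::define-s0}, the right-hand side is controlled in terms of $s_0$ rather than $s$, which is essential for what follows.

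Next, with $t_0 \in (C_1 \lambda_{p,\tau}\sigma, C_4 \lambda_{p,\tau}\sigma]$ and $I := \{j : |\beta_{j,\init}| \geq t_0\}$, I would decompose $I$ into $I \cap S^c$, $I \cap S_{\text{big}}$, and $I \cap (S \setminus S_{\text{big}})$, where $S_{\text{big}} := \{j \in S : |\beta_j| \geq \lambda\sigma\}$. Ordering $|\beta_1|\geq\cdots\geq|\beta_p|$ and using the defining property of $s_0$ forces $|S_{\text{big}}| \leq s_0$ \emph{directly}, without going through the $\ell_2$ estimate. For any $j$ in the first or third piece, $|\beta_{j,\init} - \beta_j| \geq t_0 - \lambda\sigma \geq t_0/2$ provided $C_1 \geq 2$; Markov's inequality then bounds each such piece by $4\shtwonorm{\beta_{\init}-\beta}^2/t_0^2$. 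Calibrating $C_1$ sufficiently large (in terms of $C_0$, $a$, $\tau$) to reduce these counts to $\leq s_0/2$ yields~\eqref{eq::ideal-size}: $|I \setminus S| \leq s_0$ and $|I| \leq 2s_0$.

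Finally, since $|I| \leq 2s_0 \leq 2s$, UUP gives $\Lambda_{\min}(X_I^T X_I / n) \geq 1 - \delta_{2s}$, and the decomposition $\hat\beta_I - \beta_I = (X_I^T X_I)^{-1} X_I^T(\epsilon + X_{S\setminus I}\beta_{S\setminus I})$ splits the OLS error into a noise term, bounded on $\T_a$ by $\basepen\sqrt{|I|}/(1-\delta_{2s})$, and an omitted-signal term, bounded via the restricted orthogonality constant by $\theta_{s,2s}\shtwonorm{\beta_{S\setminus I}}/(1-\delta_{2s})$. Adding $\shtwonorm{\beta_{S\setminus I}}^2$, itself controlled by $\sum_j \min(\beta_j^2, t_0^2)$ up to a small Markov residual for indices $j\in S$ with $|\beta_j| > 2t_0$ that nonetheless escape $I$, then assembles~\eqref{eq::ideal-MSE} with $C_3$ absorbing $C_0, C_4, \tau, \delta_{2s}, \theta_{s,2s}$. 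The main obstacle is precisely the calibration of $C_1$ (and the dependence of $C_4$ on it): a direct combination of Proposition~\ref{prop:DS-oracle} with Markov only produces $|I \setminus S| = O(s_0\log p)$, so the split into $S_{\text{big}}$ (handled combinatorially via the definition of $s_0$) and its complement (handled analytically via the $\ell_2$ bound) is indispensable for removing the spurious $\log p$ factor and obtaining the tight $s_0$ bound in~\eqref{eq::ideal-size}.
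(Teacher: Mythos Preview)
Your proposal has a genuine gap in the counting step for $|I|$. You invoke Proposition~\ref{prop:DS-oracle} to get $\shtwonorm{\beta_{\init}-\beta}^2 \le C_0^2\cdot 2\log p\bigl[\sigma^2/n+\sum_i\min(\beta_i^2,\sigma^2/n)\bigr]$ and then apply $\ell_2$-Markov with threshold $t_0\asymp\lambda_{p,\tau}\sigma$. But the bracket on the right is not $\asymp s_0\sigma^2/n$; in general $\sum_i\min(\beta_i^2,\sigma^2/n)$ can be as large as $s_0\lambda^2\sigma^2 = 2s_0(\log p)\sigma^2/n$ (take many coefficients of size $\sigma/\sqrt{n}$), so Proposition~\ref{prop:DS-oracle} only yields $\shtwonorm{\beta_{\init}-\beta}^2 \lesssim \lambda_{p,\tau}^2\sigma^2\, s_0\log p$. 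Dividing by $t_0^2\asymp\lambda_{p,\tau}^2\sigma^2$ then gives $|I\setminus S_{\text{big}}| \lesssim s_0\log p$, and your split into $S_{\text{big}}$ versus its complement does \emph{not} remove this factor: the $\ell_2$-Markov piece on $I\cap S^c$ and $I\cap(S\setminus S_{\text{big}})$ still carries the extra $\log p$. ``Calibrating $C_1$ sufficiently large'' would require $C_1\asymp\sqrt{\log p}$, which is not a constant.

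The paper's proof avoids this by \emph{not} using the final $\ell_2$ oracle inequality of Proposition~\ref{prop:DS-oracle} for the counting. Instead it extracts from the Cand\`es--Tao analysis the intermediate $\ell_1$ bound $\norm{h_{T_0^c}}_1 \le C_1\lambda_{p,\tau}\sigma\, s_0$ (eq.~\eqref{eq::DS-T0c-1-bound}), where $h=\beta_{\init}-\beta_{T_0}$ and $T_0$ indexes the $s_0$ largest coefficients of $\beta$. An $\ell_1$-Markov then gives $|I\cap T_0^c|\le \norm{h_{T_0^c}}_1/t_0 < s_0$ directly, with no $\log p$ loss, and $|I\cap T_0|\le s_0$ trivially (Lemma~\ref{lemma:threshold-DS}). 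Likewise the control of $\twonorm{\beta_{\drop}}$ needed in the OLS step uses the companion $\ell_2$ bound $\twonorm{h_{T_0}}\le C_0'\lambda_{p,\tau}\sigma\sqrt{s_0}$ (eq.~\eqref{eq::DS-T01-2-bound}); this is again sharper than what Proposition~\ref{prop:DS-oracle} alone provides and is what feeds into Lemma~\ref{prop:MSE-missing}. In short, the key input is the $\ell_1$ bound on $h_{T_0^c}$, which is strictly finer information than the $\ell_2$ oracle inequality you are quoting; your argument cannot close without it.
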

\begin{theorem}\textnormal{(\bf{Ideal model selection for the Thresholded Lasso})}
\label{thm:RE-oracle-main}
Suppose $RE(s_0, 6, X)$ holds with $K(s_0, 6)$, and
conditions~\eqref{eq::eigen-admissible-s} and~\eqref{eq::eigen-max} hold.
Let $\beta_{\init}$ be an optimal solution to~\eqref{eq::origin} with 
$\lambda_{n} = d_0 \sqrt{2 \log p/n} \sigma \geq 2 \basepen$, where 
 $a \geq 0$ and $d_0 \geq 2 \sqrt{1 + a}$.
Suppose that we choose $t_0 = C_4 \lambda \sigma$, for some 
constant $C_4 \geq D_1$, where $D_1= \Lambda_{\max}(s-s_0) + 9 K^2(s_0, 6)/2$; set $I =  \left\{j \in \{1, \ldots, p\}: \beta_{j, \init} \geq t_0 \right\}$. 
Then for $\drop := \{1, \ldots, p\} \setminus I$ and 
$\hat\beta_{I} = (X_I^T X_{I})^{-1} X_{I}^T Y$, we have on $\T_a$:
\bens
|I| & \leq  & s_0 (1 + D_1/C_4)   < 2 s_0, \; \; |I \cup S|   \leq  s + s_0 \; \text { and } \\
\shtwonorm{\hat\beta - \beta}^2 & \leq &  
2 D_3^2 \log p (\sigma^2/n + \sum_{i=1}^p \min(\beta_i^2, \sigma^2/n))
\eens
where $D_3$ depends on $a$,  
$K(s_0, 6)$, $D_0$ and $D_1$ as in~\eqref{eq::D0-define} 
and~\eqref{eq::D1-define}, $\Lambda_{\min}(|I|)$, $\theta_{s, 2s_0}$, 
and $C_4$; see~\eqref{eq::D3-constant}. 
\end{theorem}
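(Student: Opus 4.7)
The plan has three stages: (i) invoke the RE-based oracle rate for the initial Lasso to bound $\shtwonorm{\beta_{\init}-\beta}$ in terms of $s_0$ rather than $s$; (ii) convert this $\ell_2$ bound into two counting arguments capping $|I|$ and $|I \setminus S|$; and (iii) combine with the standard least-squares decomposition on $X_I$ to bound $\shtwonorm{\hat\beta-\beta}^2$. Let $S_0$ denote the index set of the top $s_0$ coordinates of $|\beta|$; by~\eqref{eq::define-s0}, $|S_0| \leq s_0$, $S_0 \subseteq S$, and $|\beta_j| \leq \lambda\sigma$ for $j \notin S_0$, so $\sum_{j \notin S_0}\beta_j^2 \leq \sum_j \min(\beta_j^2, \lambda^2 \sigma^2) \leq s_0 \lambda^2 \sigma^2$. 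Under $RE(s_0, 6, X)$ with $\lambda_n \geq 2\basepen$, a BRT09-style oracle inequality (using $6 = 2\cdot 3$ instead of $3$ to absorb the non-sparse tail $\beta_{S_0^c}$) yields on $\T_a$ that $\shtwonorm{\beta_{\init} - \beta}^2 \leq D_0 K^2(s_0, 6) s_0 \lambda_n^2$ for a constant $D_0$ as in~\eqref{eq::D0-define}.

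\textbf{Counting.} For each $j \in I \setminus S_0$ we have $|\beta_j| \leq \lambda\sigma$ and $|\beta_{j,\init}| \geq t_0 = C_4 \lambda \sigma$, hence $(\beta_{j,\init}-\beta_j)^2 \geq (C_4 - 1)^2 \lambda^2 \sigma^2$. Summing over $I \setminus S_0$ and comparing with stage (i) gives
\[
|I \setminus S_0|\,(C_4 - 1)^2 \lambda^2 \sigma^2 \; \leq \; D_0 K^2(s_0, 6) s_0 \lambda_n^2.
\]
Using $\lambda_n = d_0 \lambda \sigma$ and the definition $D_1 = \Lambda_{\max}(s - s_0) + \tfrac{9}{2} K^2(s_0, 6)$, this reduces to $|I \setminus S_0| \leq s_0 D_1/C_4$, whence $|I| \leq s_0 (1 + D_1/C_4) < 2 s_0$. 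Since $S_0 \subseteq S$, also $|I \setminus S| \leq |I \setminus S_0| \leq s_0$, and therefore $|I \cup S| \leq s + s_0$.

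\textbf{Least-squares error and main obstacle.} With $\hat\beta_I = (X_I^T X_I)^{-1} X_I^T Y$ and $\hat\beta_{I^c} = 0$,
\[
\hat\beta_I - \beta_I \; = \; (X_I^T X_I)^{-1} X_I^T X_{S \setminus I}\, \beta_{S \setminus I} \; + \; (X_I^T X_I)^{-1} X_I^T \epsilon,
\]
so $\shtwonorm{\hat\beta - \beta}^2 = \shtwonorm{\hat\beta_I - \beta_I}^2 + \shtwonorm{\beta_{I^c}}^2$. On $\T_a$ the noise term is bounded by $|I|\basepen^2/\Lambda_{\min}^2(|I|)$ via $\norm{X_I^T\epsilon}_\infty \leq n\basepen$ and~\eqref{eq::eigen-cond}, while the cross term is bounded by $\theta_{s, 2 s_0}^2 \shtwonorm{\beta_{I^c}}^2 / \Lambda_{\min}^2(|I|)$ by~\eqref{label:correlation-coefficient} with $|I| \leq 2 s_0$ and $|S \setminus I| \leq s$. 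For the residual I split along $S_0$: on $I^c \setminus S_0$, $\beta_j^2 \leq \min(\beta_j^2, \lambda^2 \sigma^2)$ slots directly into the target; on $I^c \cap S_0$, the inequality $\beta_j^2 \leq 2(\beta_j - \beta_{j,\init})^2 + 2 \beta_{j,\init}^2 \leq 2(\beta_j - \beta_{j,\init})^2 + 2 t_0^2$ charges the first piece against stage (i)'s $\ell_2$ bound (contribution $O(s_0 \lambda^2 \sigma^2)$) and the second against at most $s_0$ coordinates (contribution $O(C_4^2 s_0 \lambda^2 \sigma^2)$). The main obstacle is the bookkeeping of these constants so that they aggregate into the single $D_3$ of~\eqref{eq::D3-constant}, producing the stated $2 D_3^2 \log p$ factor after substituting $\sigma^2/n = \lambda^2 \sigma^2/(2 \log p)$; in particular, one must track how $C_4 \geq D_1$ propagates through the residual term without spoiling the oracle rate $\sum_j \min(\beta_j^2, \sigma^2/n)$.
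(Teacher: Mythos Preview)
Your three–stage skeleton (Lasso oracle bound $\to$ counting $\to$ OLS error with missing variables) is exactly the paper's architecture: Theorem~\ref{thm:RE-oracle} supplies the Lasso bounds, Lemma~\ref{lemma:threshold-RE} does the counting and controls $\twonorm{\beta_\drop}$, and Lemma~\ref{prop:MSE-missing} finishes the $\ell_2$ error. Stage (iii) of your proposal is essentially Lemma~\ref{prop:MSE-missing} plus the split of $\twonorm{\beta_\drop}^2$ along $S_0$, which is how the paper bounds $\twonorm{\beta_\drop}$ in Lemma~\ref{lemma:threshold-RE}.

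The gap is in stage (ii). Your counting is $\ell_2$-based: from $(\beta_{j,\init}-\beta_j)^2 \geq (C_4-1)^2\lambda^2\sigma^2$ on $I\setminus S_0$ you get
\[
|I\setminus S_0| \;\leq\; \frac{\twonorm{\beta_{\init}-\beta}^2}{(C_4-1)^2\lambda^2\sigma^2},
\]
which is of order $s_0 \cdot \text{const}/(C_4-1)^2$, not $s_0 D_1/C_4$. The sentence ``this reduces to $|I\setminus S_0|\leq s_0 D_1/C_4$'' is not justified and in fact does not follow: an $\ell_2$ oracle bound of size $O(s_0\lambda^2\sigma^2)$ cannot produce a count linear in $1/C_4$. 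The paper obtains the exact $D_1/C_4$ by a different route: it works with $h=\beta_{\init}-\beta_{T_0}$ (so $h_{T_0^c}=\beta_{\init,T_0^c}$) and uses the $\ell_1$ bound $\norm{h_{T_0^c}}_1\leq D_1\lambda\sigma s_0$ from Theorem~\ref{thm:RE-oracle}. Then the $\ell_1$ counting
\[
|I\cap T_0^c| \;\leq\; \frac{\norm{\beta_{\init,T_0^c}}_1}{t_0} \;=\; \frac{\norm{h_{T_0^c}}_1}{C_4\lambda\sigma} \;\leq\; \frac{D_1 s_0}{C_4}
\]
gives the stated bound directly, and with $C_4\geq D_1$ yields $|I|\leq s_0(1+D_1/C_4)<2s_0$. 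Relatedly, your stage (i) quotes an $\ell_2$ bound ``$\twonorm{\beta_{\init}-\beta}^2\leq D_0 K^2(s_0,6)s_0\lambda_n^2$'' which is neither what Theorem~\ref{thm:RE-oracle} says nor what you need; the relevant outputs of Theorem~\ref{thm:RE-oracle} are the pair $\twonorm{h_{T_{01}}}\leq D_0\lambda\sigma\sqrt{s_0}$ and $\norm{h_{T_0^c}}_1\leq D_1\lambda\sigma s_0$, and it is the second of these that drives the counting and pins down the constant $D_1$ appearing in the theorem.
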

Our analysis for Theorem~\ref{thm:ideal-MSE-prelude} builds 
upon~\cite{CT07}, which show that so long as $\beta$ is sufficiently
sparse the Dantzig selector as in~\eqref{eq::DS-func} achieves 
the oracle inequality as in~\eqref{eq::log-MSE}.
Note that allowing $t_0$ to be  chosen from a range 
(as wide as one would like, with the cost of increasing 
the constant $C_3$ in~\eqref{eq::ideal-MSE}), saves us from having 
to estimate $C_1$, which indeed depends on $\delta_{2s}$ and 
$\theta_{s, 2s}$.
The same comment applies to Theorem~\ref{thm:RE-oracle-main} 
for $D_3$.
Assumption~\ref{def:CT-cond} implies that 
Assumption~\ref{def:BRT-cond} holds for $k_0 = 1$ with  $K(s, k_0) = 
\sqrt{\Lambda_{\min}(2s)}/(\Lambda_{\min}(2s) - \theta_{s, 2s}) \leq 
{\sqrt{\Lambda_{\min}(2s)}}/(1 - \delta_{2s} -  \theta_{s, 2s})$ 
(see~\cite{BRT09}). For a more comprehensive comparison
between these conditions, we refer to~\cite{GB09}.
We note that $RE(s_0, 6)$ is imposed on $X$ with {\it sparsity} 
fixed at $s_0$ (rather than $s$) and $k_0 = 6$ in
Theorem~\ref{thm:RE-oracle}. 
Important consequences of this result is shown in 
Section~\ref{sec:intro-pred}. 
The term {\it sparsity oracle inequalities} has also been used in the literature, 
which is targeted at bounding prediction errors of the estimators with the 
best sparse approximation of the regression function known by an oracle; 
see~\cite{BRT09} and more references therein. It would be interesting
to explore such properties for the Thresholded Lasso under the RE 
conditions.

\subsection{Connecting to the $\ell_0$ penalized least squares estimators}
\label{sec:intro-pred}
Now why is the bound of $|I| \le 2 s_0$ interesting? We wish to point out that 
this would make the behavior of the Thresholded Lasso procedure somehow 
mimic that of the $\ell_0$ penalized estimators, which is computational 
inefficient, as we introduce next. 
It is clear that for the least squares estimator based on $I$, 
$\hat\beta_{I} = (X_I^T X_{I})^{-1} X_{I}^T Y$, it holds that 
\ben
&  &\shtwonorm{X \hat{\beta}_I - X \beta}^2
 = \twonorm{P_I (X \beta + \e) - X \beta}^2
 =  \twonorm{(P_I - \Id) X_{I^c} \beta_{I^c} + P_I \e}^2 \\
\label{eq::MSE-pred}
& & \text{ and hence } \; \expct{\shtwonorm{X \hat{\beta}_I - X \beta}}^2/n
 =  \twonorm{(P_I - \Id) X_{I^c} \beta_{I^c}}^2 +  |I| \sigma^2,
\een
which again shows the typical bias and variance tradeoff. 
Consider the best model $I_0$ upon which 
$\hat{\beta}_{I_0} =  (X_{I_0}^T X_{I_0})^{-1} X_{I_0}^T Y$
achieves the minimum in~\eqref{eq::MSE-pred}:
\bens
\label{eq::MSE-pred-best}
I_0 = \argmin_{I \subset \{1, \ldots, p\}} 
\twonorm{(P_I - \Id) X_{I^c} \beta_{I^c}} +  |I| \sigma^2.
\eens
Now the question is: can one do nearly as well as $\hat{\beta}_{I_0}$ 
in the sense of achieving mean square error within $\log p$ factor 
of $\expct{\shtwonorm{X \hat{\beta}_{I_0} - X \beta}}^2$?
It turns out that the answer is yes, if one 
solves the following $\ell_0$ penalized least squares estimator with 
$\lambda_0 = \sqrt{\log p/n}$, as proposed in the RIC
procedure~\citep{FG94}:
\begin{eqnarray}
\label{eq::ell0} \; \; 
\hat \beta = \arg\min_{\beta} \shtwonorm{Y-X\beta}^2 /(2n) + 
\lambda_0^2 \sigma^2  \|\beta\|_0,
\end{eqnarray}
where $\|\beta\|_0$ is the number of nonzero components in $\beta$.
This is shown in a series of papers 
in~\cite{FG94,Barr:Birg:Mass:1999,BM97,BM01}. We refer 
to~\cite{FG94,Barr:Birg:Mass:1999} for other procedures related 
to~\eqref{eq::ell0}. Note that $\shtwonorm{Y-X\beta}^2 \leq 
2 \shtwonorm{ X \hat\beta  - X \beta}^2 + 2 \shtwonorm{\e}^2$; 
hence we only need to look at the tradeoff between
$\shtwonorm{ X \hat\beta  - X \beta}^2$ and $\log p |I|$.
Note that $\shtwonorm{ X \hat\beta  - X \beta}^2$ would be $0$ 
if $\hat{\beta} = \beta$, but  $|I|$ would be large.
Theorem~\ref{THM:PREDITIVE-ERROR} shows that 
$(a)$ the thresholded estimators achieve a balance between 
the ``complexity'' measure $\log p |I|$ and 
$\shtwonorm{ X \hat\beta  - X \beta}^2$ which now have the 
same order of magnitude; $(b)$ and in some sense, variables in model $I$ 
are essential in predicting $X \beta$.
\begin{theorem}
\label{THM:PREDITIVE-ERROR}
Let $I$ be the model selected by thresholding an initial estimator
$\beta_{\init}$, under conditions as described in 
Theorem~\ref{thm:ideal-MSE-prelude} or Theorem~\ref{thm:RE-oracle-main}.
Let $\drop := \{1, \ldots, p\} \setminus I$.
Let $s_0$ be as defined in~\eqref{eq::define-s0} and 
$\lambda = \sqrt{2 \log p/n}$.
For $\hat\beta_{I} = (X_I^T X_{I})^{-1} X_{I}^T Y$ and some constant $C$, 
we have on $\T_a$, 
\bens
\frac{\twonorm{X \hat{\beta}_I - X \beta}}{\sqrt{n}} \leq
\sqrt{\Lambda_{\max}(s)} \twonorm{\beta_{\drop}} + 
\frac{\sqrt{|I| \Lambda_{\max}(|I|)} \basepen}
{\Lambda_{\min}(|I|)} \leq C \lambda \sigma \sqrt{s_0}.
\eens
\end{theorem}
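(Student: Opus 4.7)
The plan is to start from the orthogonal decomposition
$X\hat\beta_I - X\beta = (P_I - \Id)X_{I^c}\beta_{I^c} + P_I\e$
already displayed in the lead-in to Section~\ref{sec:intro-pred}, where $P_I = X_I(X_I^TX_I)^{-1}X_I^T$, and to bound the two resulting terms separately by the triangle inequality. The first term is the bias coming from the variables not selected, and the second is the projected noise.

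For the bias, $P_I-\Id = -(\Id-P_I)$ is a projection of operator norm one, hence
$\twonorm{(P_I-\Id)X_{I^c}\beta_{I^c}} \le \twonorm{X_{I^c}\beta_{I^c}} = \twonorm{X_{S\cap I^c}\beta_{S\cap I^c}}$
since $\beta$ is supported on $S$. Because $|S\cap I^c|\le s$, the eigenvalue bound~\eqref{eq::eigen-cond} gives $\twonorm{X_{S\cap I^c}\beta_{S\cap I^c}}\le\sqrt{n\Lambda_{\max}(s)}\,\twonorm{\beta_\drop}$. For the variance term I would write $P_I\e = X_I(X_I^TX_I)^{-1}X_I^T\e$ and chain two eigenvalue bounds:
$\twonorm{P_I\e}^2 \le n\Lambda_{\max}(|I|)\twonorm{(X_I^TX_I)^{-1}X_I^T\e}^2 \le \frac{\Lambda_{\max}(|I|)}{n\Lambda_{\min}^2(|I|)}\twonorm{X_I^T\e}^2$.
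On the event $\T_a$, each coordinate of $X^T\e/n$ is at most $\basepen$ in absolute value, so $\twonorm{X_I^T\e}^2 \le n^2|I|\basepen^2$; plugging in and dividing by $\sqrt n$ yields exactly the second summand of the claimed first inequality.

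For the concluding inequality $\le C\lambda\sigma\sqrt{s_0}$, I would use the conclusions already granted by Theorems~\ref{thm:ideal-MSE-prelude} and~\ref{thm:RE-oracle-main}: in both, $|I|<2s_0$ and $\Lambda_{\min}(|I|),\Lambda_{\max}(|I|)$ are controlled via the UUP/RE constants. Since $\basepen = O(\sigma\sqrt{2\log p/n}) = O(\lambda\sigma)$, the variance term is immediately $O(\lambda\sigma\sqrt{s_0})$. The remaining ingredient is $\twonorm{\beta_\drop}^2 = O(s_0\lambda^2\sigma^2)$: I would split $\drop$ into the indices with $|\beta_j|<\lambda\sigma$, which contribute at most $\sum_j\min(\beta_j^2,\lambda^2\sigma^2)\le s_0\lambda^2\sigma^2$ directly by~\eqref{eq::define-s0}, and into those with $|\beta_j|\ge\lambda\sigma$, of which there are at most $s_0$ by the ordering remark following~\eqref{eq::define-s0}. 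On the latter set the thresholding rule $|\beta_{j,\init}|<t_0 = O(\lambda\sigma)$ yields $\beta_j^2 \le 2t_0^2 + 2(\beta_j-\beta_{j,\init})^2$, so summing contributes $O(s_0\lambda^2\sigma^2)+2\twonorm{\beta_\init-\beta}^2$, and the $\ell_2$-error bound on the initial estimator established within Theorems~\ref{thm:ideal-MSE-prelude}/\ref{thm:RE-oracle-main} is of exactly this order.

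The routine part is the two-term decomposition and the eigenvalue-chasing used to express each piece in terms of $\Lambda_{\min}(|I|)$, $\Lambda_{\max}(|I|)$, $\basepen$, and $\twonorm{\beta_\drop}$. The main obstacle is the bound $\twonorm{\beta_\drop}^2 = O(s_0\lambda^2\sigma^2)$: one has to argue that thresholding removes only coordinates that are genuinely small or whose initial estimate was close to a small value, and that the few removed ``large'' coordinates are few enough (bounded by $s_0$) and close enough (by the oracle $\ell_2$-loss of $\beta_\init$) for their squared deficit to remain on the scale $s_0\lambda^2\sigma^2$. Once this split is carried out, matching the bias and variance terms at the order $\lambda\sigma\sqrt{s_0}$ is immediate.
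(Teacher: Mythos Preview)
Your proposal is correct and follows essentially the same route as the paper: the decomposition $X\hat\beta_I - X\beta = (P_I-\Id)X_{I^c}\beta_{I^c} + P_I\e$, the projection bound on the bias term via $\Lambda_{\max}(s)$, and the eigenvalue chain on $P_I\e$ are exactly what the paper does. The only cosmetic difference is in the last step: the paper does not re-derive the bound $\twonorm{\beta_\drop}\le C\sqrt{s_0}\lambda\sigma$ inline but simply invokes Lemmas~\ref{lemma:threshold-DS} and~\ref{lemma:threshold-RE}, whose proofs carry out precisely your split into the ``small'' coordinates (bounded via~\eqref{eq::define-s0}) and the ``large'' dropped coordinates (bounded by $t_0\sqrt{s_0}+\twonorm{h_{T_0}}$, equivalent to your coordinate-wise $\beta_j^2\le 2t_0^2+2(\beta_j-\beta_{j,\init})^2$).
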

Comparing~\eqref{eq::ell0}  and~\eqref{eq::origin}, it is clear that 
for entries $\beta_{j, \init} < \lambda_0 \sigma$ in a Lasso estimator,  their 
contributions to the optimization function in~\eqref{eq::ell0} will be larger 
than that  in~\eqref{eq::origin} if $\lambda_n = \lambda_0 \sigma$; 
hence removing these entries from the initial estimator in some sense 
recovers the choices that would have been made by the complexity-based 
function as in~\eqref{eq::ell0}. Put in another way, getting rid of variables 
$\{j : \beta_{j, \init} < \lambda_0 \sigma \}$ from the solution 
to~\eqref{eq::origin} with $\lambda_n \asymp \lambda_0 \sigma$ is
in some way restoring the behavior of~\eqref{eq::ell0} in a brute-force
manner. 
Proposition~\ref{PROP:COUNTING-S0} (by setting $c' = 1$) shows that
the number of variables in $\beta$ at above and around 
$\sqrt{\log p/n} \sigma$ in magnitude is bounded by $2s_0$
(One could choose another target set: for example,
$\{j: |\beta_j| \geq  \sqrt{\log p/(c' n)} \sigma\}$, for some $c' > 1/2$.)
Roughly speaking, we wish to include most of them by leaving $2s_0$ 
variables in the model $I$. 
Such connections will be made precise in our future work.
\subsection{Controlling Type II errors}
\label{sec:type-II-intro}
In Section~\ref{SEC:GENERAL-RE} 
(cf. Theorem~\ref{thm:threshold-general}), 
we show that we can recover a subset $S_L$ of variables accurately,
where $S_L: = \{j: |\beta_j| >  \sqrt{2 \log p/n} \sigma\}$, under 
Assumption~\ref{def:BRT-cond}
when $\beta_{\min,S_L} := \min_{j \in S_L} |\beta_{j}|$ is large enough 
(relative to the $\ell_2$ loss of an initial estimator under the RE condition
on the set $S_L$); in addition, a small number of extra variables from 
$\{1, \ldots, p \} \setminus T_0 =: T_0^c$ 
are possibly also included in the model $I$, where $T_0$ 
denotes positions of the $s_0$ largest coefficients of $\beta$ in 
absolute values.
In this case, it is also possible to get rid of variables from $T_0^c$ entirely
by increasing the threshold $t_0$ while making the lower bound on 
$\beta_{\min,S_L}$ a constant times stronger. We omit such details from the 
paper. Hence compared to Theorem~\ref{THM:RE}, we have relaxed the 
restriction on 
$\beta_{\min}$: rather than requiring all non-zero entries to be large, 
we only require those in a subset $S_L$ to be recovered to be large.
In addition, we believe that our analysis can be extended to cases
when $\beta$ is not exactly sparse, but has entries decaying like a power 
law, for example, as studied by~\cite{CT07}; 
We end with Proposition~\ref{PROP:COUNTING-S0}. For a set 
$A$, we use $|A|$ to denote its cardinality.
\begin{proposition}
\label{PROP:COUNTING-S0}
Let $T_0$ denote 
positions of the $s_0$ largest coefficients of $\beta$ in absolute values.
where $s_0$ is defined in~\eqref{eq::define-s0}.
Let $a_0 = \size{S_L}$  (cf.~\eqref{eq::define-a0}). Then $\forall c' > 1/2$, \; \;
we have
$\size{ \{j \in T_0^c: |\beta_j| \geq \sqrt{\log p/(c' n)} \sigma \}}
\leq  (2c' -1) (s_0  - a_0)$.
\end{proposition}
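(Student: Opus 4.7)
The proof is a sorting/pigeonhole argument using only the definition of $s_0$ and the ordering underlying $T_0$. First I would rewrite the threshold appearing in the statement in terms of $\lambda$: since $\lambda^2 = 2\log p/n$, the condition $|\beta_j| \geq \sqrt{\log p/(c'n)}\,\sigma$ is exactly $\beta_j^2 \geq \lambda^2\sigma^2/(2c')$. Call the set of interest $U := \{j \in T_0^c : \beta_j^2 \geq \lambda^2\sigma^2/(2c')\}$ and write $N := |U|$.

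Next I would establish $S_L \subseteq T_0$. For every $j \in S_L$ one has $\min(\beta_j^2,\lambda^2\sigma^2) = \lambda^2\sigma^2$, so the defining inequality of $s_0$ in~\eqref{eq::define-s0} forces $a_0 \lambda^2\sigma^2 \leq s_0 \lambda^2\sigma^2$, i.e.\ $a_0 \leq s_0$. Because $T_0$ collects the $s_0$ largest entries in modulus and $S_L$ consists of the $a_0$ strictly largest ones (those exceeding $\lambda\sigma$), we get $S_L \subseteq T_0$ (with any convenient tie-breaking). In particular, $T_0^c \subseteq S_L^c$, so every $j \in T_0^c$ satisfies $\beta_j^2 \leq \lambda^2\sigma^2$.

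Now I would split the sum in~\eqref{eq::define-s0} over $S_L$ and $S_L^c$. The $S_L$ part contributes exactly $a_0 \lambda^2\sigma^2$, and on $S_L^c$ the minimum equals $\beta_j^2$. Subtracting gives the key budget
\[
\sum_{j \in S_L^c} \beta_j^2 \;\leq\; (s_0 - a_0)\,\lambda^2\sigma^2.
\]

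The final step is the sorting argument. Order the entries outside $S_L$ as $b_1 \geq b_2 \geq \cdots$; the indices in $T_0 \setminus S_L$ correspond to $b_1,\ldots,b_{s_0-a_0}$ and the indices in $T_0^c$ to $b_{s_0-a_0+1},\ldots$ Because the $b_i$ are non-increasing, the presence of $N$ entries in $T_0^c$ with $b_i^2 \geq \lambda^2\sigma^2/(2c')$ implies that all of $b_1,\ldots,b_{s_0-a_0+N}$ satisfy the same lower bound (the ones in $T_0 \setminus S_L$ are at least as large as those in $T_0^c$). Therefore
\[
(s_0 - a_0 + N)\,\frac{\lambda^2\sigma^2}{2c'} \;\leq\; \sum_{j \in S_L^c} \beta_j^2 \;\leq\; (s_0 - a_0)\,\lambda^2\sigma^2,
\]
which rearranges to $N \leq (2c' - 1)(s_0 - a_0)$, as required. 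There is no real obstacle here; the only subtle point is the monotonicity step that upgrades a count in $T_0^c$ to a count over all of $S_L^c$, exploiting that elements of $T_0 \setminus S_L$ dominate those of $T_0^c$ in absolute value — this is what produces the extra $s_0 - a_0$ on the left-hand side and yields the factor $2c'-1$ rather than $2c'$.
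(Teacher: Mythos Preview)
Your proof is correct and follows essentially the same route as the paper. Both arguments extract the budget $\sum_{j\in A_0^c}\beta_j^2\le (s_0-a_0)\lambda^2\sigma^2$ from the definition of $s_0$, then use the ordering (every entry in $T_0\setminus A_0$ dominates every entry in $T_0^c$) to convert a raw Markov count of $2c'(s_0-a_0)$ on $A_0^c$ into the sharper bound $(2c'-1)(s_0-a_0)$ on $T_0^c$; you just merge the Markov step and the monotonicity step into a single displayed inequality, while the paper does them in sequence.
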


\subsection{Previous work} 
We briefly review related work in multi-step procedures and the role 
of sparsity for high-dimensional statistical inference.
Before this work, hard thresholding idea has been shown in~\cite{CT07}
(via Gauss-Dantzig selector) as a method to correct the bias of 
the initial Dantzig selector. The empirical success of the Gauss-Dantzig 
selector in terms of improving the statistical accuracy is strongly 
evident in their experimental results. Our theoretical analysis
on the oracle inequalities, which hold for the Gauss-Dantzig selector 
under a uniform uncertainty principle, builds upon their theoretical 
analysis of the initial Dantzig selector under the same condition.
For the Lasso,~\cite{MY09} has also shown in theoretical analysis that 
thresholding is effective in obtaining a two-step estimator $\hat\beta$ 
that is consistent in its support with $\beta$ when $\beta_{\min}$ is 
sufficiently large; As pointed out by~\cite{BRT09}, a weakening of 
their condition is still sufficient for Assumption~\ref{def:BRT-cond} to hold. 

The sparse recovery problem under arbitrary noise is also well studied, 
see~\cite{CRT06,NV09,NT08}.
Although as argued in~\cite{CRT06} and \cite{NT08}, the best accuracy under 
arbitrary noise has essentially been achieved in both work, their bounds 
are worse than that in~\cite{CT07} (hence the present paper) under 
the stochastic noise as discussed in the present paper;
Moreover, greedy algorithms in~\cite{NV09,NT08} require $s$ to be part of 
the input, while algorithms in the present paper do not have such a 
requirement, and hence adapt to the unknown level of sparsity well. 
A more general framework on multi-step variable selection was 
studied by~\cite{WR08}. They control the probability of false positives at 
the price of false negatives, similar to what we aim for here; their analysis is 
constrained to the case when $s$ is a constant. 
Recently, another two-stage procedure that is also relevant has been 
proposed in~\cite{Zhang09}, where in the second stage 
``selective penalization'' is being applied to the set of {\it irrelevant features}  
which are defined as those below a certain threshold in the initial Lasso 
estimator; Incoherence conditions there are sufficiently different 
from the RE condition as we study in this paper for the Thresholded Lasso.
Under conditions similar to Theorem~\ref{THM:RE},~\cite{ZGB09} requires 
$s = O(\sqrt{n/\log p})$ in order to achieve variable selection consistency 
using the adaptive Lasso~\citep{Zou06} (see also~\cite{HMZ08}), 
as the second step procedure. 
Concurrent with the present work,  the authors have revisited the 
adaptive Lasso and derived bounds in terms of prediction 
error~\cite{GBZ10}; there the number of false positives is also 
aimed at being in the same order as that of the set of {\it significant} 
variables which predicts $X \beta$ well; in addition, the adaptive Lasso 
method is compared with thresholding methods, under a  stronger 
incoherence condition than the RE condition studied in the present paper.
While the focus  of the present paper 
is on variable selection and oracle inequalities for the $\ell_2$ loss,  
prediction errors of the OLS estimators $\hat\beta$ are also explicitly 
derived; We also compare the performance in 
terms of variable selections  between the adaptive and the thresholding
methods in our simulation study, which is reported in 
Section~\ref{sec:experiments}.

Parts of this work was presented in a 
conference paper~\cite{Zhou09a}. The current version expands the original 
idea and elaborates upon the conceptual connections between 
the Thresholded Lasso and $\ell_0$ penalized methods;
in addition, we provide new results on the sparse oracle inequalities under 
the RE condition (cf. Theorem~\ref{thm:RE-oracle-main},
Theorem~\ref{thm:RE-oracle} and Theorem~\ref{thm:threshold-general}).

\subsection{Organization of the paper}
Section~\ref{sec:linear-sparsity} briefly discusses the relationship between 
linear sparsity and random design matrices, while highlighting the role
thresholding plays in terms of recovering the best subset of variables,
when $s$ is a linear fraction of $n$, which in turn is 
a nonnegligible fraction of $p$.
We prove Theorem~\ref{THM:RE} essentially in Section~\ref{eq::large-beta}.
A thresholding framework for the general setting is described in
Section~\ref{sec:DS-threshold}, which also sketches the proof of 
Theorem~\ref{thm:ideal-MSE-prelude}.
The proof of Theorem~\ref{thm:RE-oracle-main} is shown in 
Section~\ref{sec:oracle-lasso}, where oracle inequalities for the original 
Lasso estimator is also shown.
In Section~\ref{SEC:GENERAL-RE}, we show conditions under which
one recovers a subset of strong signals.
Section~\ref{sec:experiments} includes simulation results showing 
that the Thresholded Lasso is consistent with our theoretical analysis on 
variable selection and on estimating $\beta$. 
Most of the technical proofs are included in the Appendix.

\section{Linear sparsity and random matrices}
\label{sec:linear-sparsity} 
A special case of design matrices that satisfy the 
Restricted Eigenvalue assumption are the random design matrices. 
This is shown in a large body of work, 
for example~\cite{Szarek91,CRT06,CT05,Donoho06,CT07,BDDW08,MPT08},
which shows that the UUP holds for ``generic'' or random design matrices 
for very significant values of $s$. It is well known that for a random matrix 
the UUP holds for $s \asymp n/\log(p/n)$ with i.i.d. Gaussian random 
variables, subject to normalizations  of columns, 
the Bernoulli, and in general the subgaussian random 
ensembles~\cite{BDDW08,MPT08};~\cite{ALPT09} show that UUP 
holds for $s \asymp n/\log^2(p/n)$ when $X$ is a random matrix 
composed of columns that are independent isotropic vectors with 
log-concave densities. Hence this setup only requires $C s$ observations 
per nonzero value in $\beta$,  where $C$ is a small constant, when $n$ is 
a nonnegligible fraction of  $p$, in order to recover $\beta$; we 
call this level of sparsity the linear sparsity.
Our simulation results in Section~\ref{sec:experiments} show that 
once $n \geq C s \log (p /n)$, where $C$ is a small constant, exact recovery 
rate of the sparsity pattern is very high for Gaussian (and Bernoulli) 
random ensembles, when $\beta_{\min}$ is sufficiently large;
this shows a strong contrast with the ordinary Lasso, for which the 
probability of success in terms of exact recovery of the sparsity pattern 
tends to zero when $n < 2 s \log (p -s)$~\citep{Wai08}. 

A series of recent papers~\cite{ZGB09,RWY09,Zhou09c} show that a 
broader class of subgaussian random matrices also satisfy the 
Restricted Eigenvalue condition; In particular,~\cite{Zhou09c} shows that for 
subgaussian random matrices $\Psi$ which are now well known to satisfy the 
UUP condition under linear sparsity, RE condition holds for 
$X := \Psi \Sigma^{1/2}$ with overwhelming probability with 
$n \asymp s \log (p /n)$ number of samples, where $\Sigma$ is assumed to 
satisfy the follow condition:
Suppose $\Sigma_{jj} = 1, \forall j = 1, \ldots, p$, and for some integer 
$1\leq s \leq p$ and a positive number $k_0$, the following condition holds
for all $\upsilon \not= 0$:
\bens
\label{eq::admissible-random}
\inv{K(s, k_0, \Sigma)} := \min_{\stackrel{J_0 \subseteq \{1, \ldots,
    p\},}{|J_0| \leq s}} 
\min_{{\norm{\upsilon_{J_0^c}}_1 \leq k_0 \norm{\upsilon_{J_0}}_1}}
\; \;  {\norm{\Sigma^{1/2} \upsilon}_2}/{\norm{\upsilon_{J_{0}}}_2} > 0.
\eens
Thus the additional covariance structure $\Sigma$ is explicitly
introduced to the columns of $\Psi$ in generating $X$. 
We believe similar results can be extended to other cases: for example, 
when $X$ is the composition of a random Fourier ensemble, or randomly 
sampled rows of orthonormal matrices, see for 
example~\cite{CT06,CT07,RV06}, where the UUP holds for 
$s = O(n/\log^c p)$ for some constant $c > 0$.

\section{Thresholding procedure when $\beta_{\min}$ is large}
\label{eq::large-beta}
In this section,
we use a penalization parameter $\lambda_n \geq B \basepen$ and assume 
$\beta_{\min} > C \lambda_n \sqrt{s}$ for some constants $B, C$; 
we first specify the thresholding parameters in this case.
We then show in Theorem~\ref{THM:GENERIC} that our algorithm works 
under any condition so long as the rate of convergence of the initial 
estimator obeys the bounds in~\eqref{eq::2-normS-generic}. 
Theorem~\ref{THM:RE} is a corollary of Theorem~\ref{THM:GENERIC} 
under Assumption~\ref{def:BRT-cond}, given the rate of convergence 
bounds for $\beta_{\init}$ following derivations in~\citep{BRT09}.

{{\bf The Iterative Procedure.}}
We obtain an initial estimator $\beta_{\init}$ using the Lasso
or the Dantzig selector.
Let $\hat{S}_0 =  \left\{j: \beta_{j, \init} > 4 \lambda_{n} \right\}$, 
and $\hat\beta^{(0)} := \beta_{\init}$; Iterate through the following 
steps twice, for  $i = 0, 1$: 
(a) Set $t_i = 4 \lambda_n \sqrt{|\hat{S}_i|}$;
(b) Threshold $\hat \beta^{(i)}$ with $t_i$ to obtain
$I := \hat{S}_{i+1}$, where 
\ben
\label{eq::final-estimator}
\hat{S}_{i+1} = \left\{j \in \hat{S}_i:
\hat{\beta}^{(i)}_j \geq 4 \lambda_{n} \sqrt{|\hat{S}_{i}|} \right\}
\een
and compute
$\hat\beta^{(i+1)}_I = (X_I^T X_I)^{-1} X_I^T Y$.
Return the final set of variables in $\hat{S}_2$ and output 
$\hat{\beta}$ such that 
$\hat{\beta}_{\hat{S}_2} = \hat{\beta}^{(2)}_{\hat{S}_2}$ and   
$\hat{\beta}_{j} = 0, \forall j \in \hat{S}_2^c$.
\begin{theorem}
\label{THM:GENERIC}
Let $\lambda_n \geq B \basepen$, where $B \geq 1$ is a constant suitably 
chosen such that the initial estimator $\beta_{\init}$ satisfies on 
some event $Q_b$, for $\upsilon_{\init} = \beta_{\init} - \beta$,
\begin{eqnarray}
\label{eq::2-normS-generic}
\norm{\upsilon_{\init,S}}_{2} & \leq & B_0 \lambda_n \sqrt{s} 
\; \text{ and } \;
\norm{\beta_{\init,\Sc}}_1 \leq B_1 \lambda_n s
\end{eqnarray}
where $B_0, B_1$ are some constants. Suppose for $B_2 = 1/(B \Lambda_{\min}(2s))$,
\ben
\label{eq::beta-min-specific}
\beta_{\min} & \geq & \left(\max\left(\sqrt{B_1}, 2 \right) 2 \sqrt{2}  + 
\max\left(B_0, \sqrt{2} B_2 \right) \right) \lambda_n \sqrt{s}.
\een
Then for $s \geq {B_1^2}/{16}$, it holds on $\T_a \cap Q_b$ that, 
(a): $\forall i = 1, 2, \; |\hat{S}_i | \leq 2s$; and (b):
\ben
\label{eq::2-norm-generic}
\shtwonorm{\hat{\beta}^{(i)} - \beta} & \leq & 
\basepen \sqrt{|\hat{S}_i|} /{\Lambda_{\min}(|\hat{S}_i|)}
\; \leq \; \lambda_n B_2 \sqrt{2s}
\een
where $\forall i = 1, 2$, 
$\hat{\beta}^{(i)}$ are the OLS estimators based on $\hat{S}_i$;
Moreover, the Iterative Procedure includes the set of relevant variables 
in $\hat{S}_2$ such that $S  \subseteq \hat{S}_2 \subseteq \hat{S}_1$ and
\ben
\label{eq::extra-variables}
\size{\hat{S}_2 \setminus S} := \size{\supp(\hat{\beta}) \setminus S} \leq
1/(16 B^2 \Lambda^2_{\min}(|\hat{S}_1|)) \leq B_2^2/16.
\een
\end{theorem}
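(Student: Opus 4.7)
The plan is a three-stage pruning argument that uses the two bounds in~\eqref{eq::2-normS-generic} at successive thresholdings, without re-examining the convex optimization that produced $\beta_{\init}$. At each stage $i$ I bound $|\hat{S}_i|$, verify $S\subseteq \hat{S}_i$ so the subsequent OLS step sees an unbiased model, and then bound the OLS $\ell_2$ error. The key mechanism is that the threshold at stage $i+1$ grows like $\sqrt{|\hat{S}_i|}$, so the $\ell_1$ tail bound $\norm{\beta_{\init,\Sc}}_1 \leq B_1\lambda_n s$ can be cashed in twice via a Markov-type inequality, and the OLS $\ell_2$ error can be cashed in at the final thresholding to drive the false-positive count down to $O(1)$.

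First, Markov applied to $\norm{\beta_{\init,\Sc}}_1 \leq B_1\lambda_n s$ at threshold $4\lambda_n$ gives $|\hat{S}_0 \cap \Sc| \leq B_1 s/4$, so $|\hat{S}_0| \leq s(1+B_1/4)$; the inclusion $S\subseteq \hat{S}_0$ is immediate from $|\beta_{j,\init}| \geq \beta_{\min} - \norm{\upsilon_{\init,S}}_2 > 4\lambda_n$ for $j\in S$. Applying Markov again at the larger threshold $t_0 = 4\lambda_n\sqrt{|\hat{S}_0|}$ yields $|\hat{S}_1 \cap \Sc| \leq B_1 s/(4\sqrt{|\hat{S}_0|})$; since $|\hat{S}_0| \geq |S| = s$ and $s \geq B_1^2/16$ by hypothesis, this is at most $s$, hence $|\hat{S}_1| \leq 2s$. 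Preservation $S\subseteq \hat{S}_1$ requires $\beta_{\min} - B_0\lambda_n\sqrt{s} \geq 4\lambda_n\sqrt{|\hat{S}_0|}$; using $\sqrt{|\hat{S}_0|} \leq \max(\sqrt{2},\sqrt{B_1/2})\sqrt{s}$ this matches exactly the $2\sqrt{2}\max(\sqrt{B_1},2)$ contribution in the $\beta_{\min}$ hypothesis.

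Next, because $S\subseteq \hat{S}_1$, $Y = X_{\hat{S}_1}\beta_{\hat{S}_1} + \e$ gives $\hat{\beta}^{(1)}_{\hat{S}_1} - \beta_{\hat{S}_1} = (X_{\hat{S}_1}^T X_{\hat{S}_1})^{-1} X_{\hat{S}_1}^T \e$. On $\T_a$, $\norm{X^T\e/n}_\infty \leq \basepen$ together with $\Lambda_{\min}(|\hat{S}_1|) \geq \Lambda_{\min}(2s)$ yields $\twonorm{\hat{\beta}^{(1)}-\beta} \leq \basepen\sqrt{|\hat{S}_1|}/\Lambda_{\min}(|\hat{S}_1|) \leq \lambda_n B_2\sqrt{2s}$, which is~\eqref{eq::2-norm-generic} for $i=1$. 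Now the $\ell_2$ bound just obtained replaces the role of the $\ell_1$ bound in the next thresholding: for $j \in \hat{S}_2 \cap \Sc$, $|\hat\beta^{(1)}_j - \beta_j| \geq 4\lambda_n\sqrt{|\hat{S}_1|}$, whence
\[
16\lambda_n^2 |\hat{S}_1|\,|\hat{S}_2\cap\Sc| \leq \twonorm{\hat{\beta}^{(1)}-\beta}^2 \leq \basepen^2 |\hat{S}_1|/\Lambda_{\min}^2(|\hat{S}_1|),
\]
delivering $|\hat{S}_2 \setminus S| \leq 1/(16 B^2\Lambda_{\min}^2(|\hat{S}_1|)) \leq B_2^2/16$, which is~\eqref{eq::extra-variables}. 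Preservation $S\subseteq \hat{S}_2$ follows from $\beta_{\min} - \lambda_n B_2\sqrt{2s} \geq 4\lambda_n\sqrt{2s}$, which the $\max(B_0,\sqrt{2}B_2)$ term in the hypothesis provides together with $|\hat{S}_1|\leq 2s$. The OLS step on $\hat{S}_2$ then repeats the previous estimate with $\hat{S}_2$ in place of $\hat{S}_1$, giving~\eqref{eq::2-norm-generic} for $i=2$.

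The main obstacle is bookkeeping rather than any single deep step: at each of the three thresholdings I must cash in the correct bound (the tail $\ell_1$ bound on $\Sc$, the bulk $\ell_2$ bound on $S$, or the OLS $\ell_2$ error), and check that the composite $\beta_{\min}$ hypothesis is just strong enough to retain $S$ at every pass. The case split $B_1 \lessgtr 4$ hidden in $\sqrt{|\hat{S}_0|}\leq \max(\sqrt{2},\sqrt{B_1/2})\sqrt{s}$ must be threaded without circularity between the size of $\hat{S}_0$ and the threshold $t_0$ that depends on it; this is exactly what the $\max(\sqrt{B_1},2)$ factor in the hypothesis is tracking.
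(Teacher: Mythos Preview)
Your proposal is correct and follows essentially the same approach as the paper: Markov on the $\ell_1$ tail bound to control false positives at each thresholding, the $\ell_2$ bound on $S$ plus the $\beta_{\min}$ hypothesis to retain $S$, and the standard OLS error bound once $S\subseteq \hat S_i$. The only cosmetic differences are that you establish $S\subseteq \hat S_0$ directly before bounding $|\hat S_1\setminus S|$, whereas the paper routes through $S\subseteq \hat S_1\subseteq \hat S_0$, and the paper packages the retention argument into a separate lemma (with $\Xi=\max_i\|\upsilon^{(i)}_S\|_\infty$ and $\Gamma=\max_i t_i$) rather than checking it inline at each stage.
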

The proof of Theorem~\ref{THM:GENERIC} appears in 
Section~\ref{appendix:thm:generic}. 
We now discuss its relationship to theorems in the subsequent sections.
We first note that in order to obtain $\hat{S}_1$ such that 
$|\hat{S}_1| \leq 2s$ and $\hat{S}_1 \supseteq S$ as above, we only 
need to threshold $\beta_{\init}$ at $t_0 = B_1 \lambda_n$;
here instead of having to estimate the unknown $B_1$, 
we can use $t_0 = c_0 \lambda_n \sqrt{s}$ for some constant $c_0$ to 
threshold $\beta_{\init}$. In the general setting, we require 
that $t_0$ be chosen from the range $(C_1 \lambda_n, C_4 \lambda_n]$ for
some constants $C_1, C_4 $ to be specified; see 
Section~\ref{sec:DS-threshold} (Lemma~\ref{lemma:threshold-DS}) 
for example.
We note that without the knowledge of $\sigma$, one could use 
$\hat{\sigma} \geq \sigma$ 
in $\lambda_n$; this will put a stronger requirement on $\beta_{\min}$, 
but all conclusions of Theorem~\ref{THM:GENERIC} hold. 
When $\beta_{\min}$ does not satisfy the constraint as in 
Theorem~\ref{THM:GENERIC}, we cannot really guarantee that all
variables in $S$ will be chosen. Hence~\eqref{eq::2-normS-generic}
will be replaced by requirements on $T_0$,
which denotes
locations of the $s_0$ largest coefficients of $\beta$ in absolute values:
ideally, we wish to have 
\begin{eqnarray}
\label{eq::1-norm-general}
\norm{(\beta_{\init} - \beta)_{T_0}}_{2} \leq C_0 \lambda_n \sqrt{|T_0|} 
\; \text{ and }  \; \norm{\beta_{\init,T_0^c}}_1 \leq C_1 \lambda_n |T_0|;
\end{eqnarray}
for some constants $C_0, C_1$,
so that~\eqref{eq::ideal-size} and~\eqref{eq::ideal-MSE} hold
under suitably chosen thresholding rules.
This is the content of Theorem~\ref{thm:RE-oracle} and 
Theorem~\ref{thm:threshold-general}.

\section{Nearly ideal model selections under the UUP}
\label{sec:DS-threshold}
In this section, we wish to derive a meaningful criteria for consistency in 
variable selection, when $\beta_{\min}$ is well below the noise level.
Suppose that we are given an initial estimator $\beta_{\init}$ that achieves 
the oracle inequality as in~\eqref{eq::log-MSE},
which adapts nearly ideally not only to the uncertainty in the support set 
$S$ but also the ``significant'' set.
We show that although we cannot guarantee the presence of 
variables indexed by
$S_R = \{j: |\beta_j| \leq \sigma \sqrt{2 \log p/n} \}$ to be 
included in the final set $I$ (cf.~\eqref{eq::beta-2-small}) due to their 
lack of strength, we wish to include in $I$ most variables in
$S_L = S \setminus S_R$ such that the OLS estimator based on $I$ 
achieves~\eqref{eq::log-MSE}
even though some non-zero variables are missing from $I$. 
Here we pay a price for the missing variables in order to 
obtain a sufficiently sparse model $I$. 
Toward this goal, we analyze the following algorithm.

{{\bf The General Two-step Procedure}}: Assume $\delta_{2s} + \theta_{s,2s} < 1 - \tau$, 
where $\tau > 0$;
\begin{enumerate}
\item
\label{step::thresh}
First obtain an initial estimator $\beta_{\init}$ using the Dantzig 
selector in~\eqref{eq::DS-func} with 
$\lambda_n = (\sqrt{1+a} + \tau^{-1}) \sqrt{2 \log p/n} \sigma$, where 
$a \geq 0$; then threshold $\beta_{\init}$ with $t_0$, chosen
from the range 
$(C_1 \lambda_{p, \tau} \sigma, C_4 \lambda_{p, \tau} \sigma]$, 
for $C_1$ as defined in~\eqref{eq::DS-constants-1},
to obtain a set $I$ of cardinality at most $2s_0$ 
(cf. Lemma~\ref{lemma:threshold-DS}): \\
set $I :=  \left\{j \in \{1, \ldots, p\}: \beta_{j, \init} \geq t_0  \right\}.$
\item
Given a set $I$ as above, run the OLS regression to 
obtain $\hat\beta_{I} = (X_I^T X_{I})^{-1} X_{I}^T Y$ and set
$\hat\beta_j = 0, \forall j \not\in I.$
\end{enumerate}
In Section~\ref{sec:oracle-lasso}, we analyze the Thresholded Lasso,
where we obtain $\beta_{\init}$ via the Lasso under the RE condition
and follow the same steps as above; see Theorem~\ref{thm:RE-oracle} and 
Lemma~\ref{lemma:threshold-RE} for the new $\lambda_n$ and $t_0$  to be 
specified.
Under the UUP,~\cite{CT07} have shown that the Dantzig selector 
achieves nearly the ideal level of $\ell_2$ loss. 
We then show in Lemma~\ref{lemma:threshold-DS} that thresholding at the 
level of $C_1 \lambda \sigma$  at Step~\ref{step::thresh} selects a set 
$I$ of at most $2s_0$ variables, among which at most $s_0$ are from $\Sc$. 
\begin{proposition}
\textnormal{\citep{CT07}}
\label{prop:DS-oracle}
Let $Y = X \beta + \e$, for $\e$ being i.i.d. $N(0, \sigma^2)$ and
$\twonorm{X_j}^2 = n$.
Choose $\tau, a > 0$ and set $\lambda_n = (\sqrt{1+a} + \tau^{-1}) \sigma \sqrt{2 \log p/n}$ in~\eqref{eq::DS-func}. 
Then if $\beta$ is $s$-sparse with $\delta_{2s} + \theta_{s,2s} < 1 - \tau$, 
the Dantzig selector obeys
with probability at least $1 - (\sqrt{\pi \log p} p^a)^{-1}$,
$\twonorm{\hat\beta - \beta}^2  \leq  
2 C_2^2 (\sqrt{1+a} + \tau^{-1})^2 \log p
\left({\sigma^2}/{n} + 
\sum_{i=1}^p \min\left(\beta_i^2, {\sigma^2}/{n}\right)\right).$
\silent{large probability; for concreteness, if one chooses 
$\lambda_{p,\tau} := (\sqrt{1+a} + \tau^{-1}) \sqrt{2 \log p/n}$ for each 
$a \geq 0$, the bounds hold with} 
\end{proposition}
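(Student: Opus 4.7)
The plan is to follow the original argument of Cand\`es and Tao~\cite{CT07}, which rests on two structural properties of $h := \hat\beta - \beta$: a \emph{tube} bound coming from Dantzig feasibility and an $\ell_1$ \emph{cone} bound coming from $\ell_1$-minimization. First I would verify that on $\T_a$ the true vector $\beta$ is itself feasible in~\eqref{eq::DS-func}: since $\lambda_n = (\sqrt{1+a}+\tau^{-1})\sigma\sqrt{2\log p/n} \geq \basepen$, one has $\|X^T(Y-X\beta)/n\|_\infty = \|X^T\e/n\|_\infty \leq \basepen \leq \lambda_n$. Optimality of $\hat\beta$ then yields $\|\hat\beta\|_1 \leq \|\beta\|_1$, which translates, for any $T\subseteq\{1,\dots,p\}$, into the cone-type inequality $\|h_{T^c}\|_1 \leq \|h_T\|_1 + 2\|\beta_{T^c}\|_1$. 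Applying the triangle inequality to both Dantzig constraints also gives the tube bound $\|X^T X h/n\|_\infty \leq 2\lambda_n$.

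Second, I would bound $\|h\|_2$ by a shelling argument. For a fixed $T$, decompose $h_{T^c}$ into disjoint blocks $T_1,T_2,\dots$ of size $|T|$, where $T_k$ indexes the $|T|$ largest-magnitude entries of $h$ outside $T\cup T_1\cup\dots\cup T_{k-1}$, and set $T_{01} = T\cup T_1$. Combining~\eqref{eq::RIP-define} on $T_{01}$ with the restricted orthogonality estimate~\eqref{label:correlation-coefficient} on the pairs $(T_{01},T_k)$ for $k\geq 2$ gives
\[
(1-\delta_{2|T|})\|h_{T_{01}}\|_2^2 \;\leq\; \frac{|\langle X_{T_{01}}h_{T_{01}},X h\rangle|}{n} + \theta_{|T|,2|T|}\|h_{T_{01}}\|_2\sum_{k\ge 2}\|h_{T_k}\|_2.
\]
The tube bound controls the first term by $2\lambda_n\sqrt{|T_{01}|}\|h_{T_{01}}\|_2$, while monotonicity of the sorted entries of $h$ yields $\sum_{k\ge 2}\|h_{T_k}\|_2\leq \|h_{T^c}\|_1/\sqrt{|T|}$. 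Feeding the cone inequality into the latter produces a schematic bound
\[
\|h\|_2^2 \;\lesssim\; \frac{\lambda_n^2|T|}{(1-\delta_{2|T|}-\theta_{|T|,2|T|})^2} + \frac{\|\beta_{T^c}\|_1^2}{|T|}.
\]

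Finally, I would optimize over $T$ by taking it to be the positions of the $s_0$ largest coefficients of $\beta$ in absolute value, with $s_0$ as in~\eqref{eq::define-s0}. Since $|T|\leq s_0\leq s$, the assumption $\delta_{2s}+\theta_{s,2s}<1-\tau$ uniformly bounds the denominator below by $\tau$. The observation that $|\beta_j|<\sigma\sqrt{2\log p/n}$ for every $j\in T^c$ converts the tail $\|\beta_{T^c}\|_1^2/|T|$ into the oracle quantity $\sum_i\min(\beta_i^2,\sigma^2/n)$ up to logarithmic factors, and the minimality of $s_0$ supplies $s_0\lambda^2\sigma^2 \leq \lambda^2\sigma^2 + \sum_i\min(\beta_i^2,\lambda^2\sigma^2)$; combined with $\lambda_n^2 \propto \log p\cdot\sigma^2/n$, these ingredients deliver the stated oracle inequality, with the additive $\sigma^2/n$ term absorbing the contribution of the extra unit from the minimality step. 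The main obstacle is the bookkeeping of constants in this last step: matching $\|\beta_{T^c}\|_1^2/|T|$ against the oracle quantity $\sum_i\min(\beta_i^2,\sigma^2/n)$ cleanly, and threading the factor $(\sqrt{1+a}+\tau^{-1})^2$ through the shelling estimate so that the final prefactor collapses to the stated form $2 C_2^2(\sqrt{1+a}+\tau^{-1})^2\log p$.
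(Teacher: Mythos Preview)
Your outline captures the right architecture --- feasibility of $\beta$, the cone and tube constraints, and the shelling reduction --- and this is indeed the skeleton of the Cand\`es--Tao argument that the paper invokes (the paper does not reprove the proposition; it cites \cite{CT07} and only records the by-products \eqref{eq::DS-T01-2-bound}--\eqref{eq::DS-T0c-1-bound} in Section~\ref{sec:append-guass}). But the final step you flag as ``bookkeeping of constants'' is in fact a genuine gap, not a matter of tracking factors.

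With your choice $h=\hat\beta-\beta$, the cone inequality reads $\|h_{T^c}\|_1\le\|h_T\|_1+2\|\beta_{T^c}\|_1$, and after shelling you arrive at a term of order $\|\beta_{T_0^c}\|_1/\sqrt{s_0}$ in the bound for $\|h\|_2$. This quantity does \emph{not} reduce to the oracle sum up to a logarithmic factor when $s\gg s_0$. Concretely, take $\beta$ with $s$ equal nonzero entries of magnitude $\lambda\sigma/\sqrt{s}$; then $s_0=1$, $\|\beta_{T_0^c}\|_1\asymp\sqrt{s}\,\lambda\sigma$, and your bound gives $\|h\|_2^2\gtrsim s\lambda^2\sigma^2$, whereas the claimed oracle bound is of order $\log p\cdot\lambda^2\sigma^2$. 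The ratio $s/\log p$ can be arbitrarily large while the UUP at level $s$ still holds.

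The fix, as in \cite{CT07} and as the paper adopts in Section~\ref{sec:append-guass}, is to compare $\hat\beta$ not to $\beta$ but to the hard-thresholded vector $\beta^{(1)}=\beta_{T_0}$, setting $h=\hat\beta-\beta_{T_0}$. The residual $\beta_{T_0^c}$ then migrates from the cone constraint into the tube constraint, where it appears as $\langle X_{T_{01}}h_{T_{01}},X\beta_{T_0^c}\rangle/n$. Because $\beta_{T_0^c}$ is $(s-s_0)$-sparse and the UUP is assumed at level $2s$, this inner product is controlled directly by $\|\beta_{T_0^c}\|_2$ (via restricted isometry/orthogonality at the full sparsity $s$), and the key identity $\|\beta_{T_0^c}\|_2^2\le s_0\lambda^2\sigma^2$ from the definition of $s_0$ closes the loop. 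The extra $\tau^{-1}$ in $\lambda_n$ is there precisely to absorb this contribution. So the decisive difference from your sketch is the choice of comparison point for $h$, which determines whether the tail of $\beta$ is measured in $\ell_1$ (too large) or in $\ell_2$ (exactly right).
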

From this point on we let 
$\delta := \delta_{2s}$ and $\theta := \theta_{s, 2s}$;
Analysis in~\cite{CT07} (Theorem 2) and the current paper yields the 
following constants,
\begin{eqnarray}
\label{eq::DS-constants}
C_2 & = & 2 C_0' + \frac{1 + \delta}{1 - \delta - \theta} 
\; \text { where } 
C'_0 = \frac{C_0}{1 - \delta - \theta} + 
\frac{\theta(1 + \delta)}{(1 - \delta - \theta)^2}, 
\end{eqnarray}
where 
$C_0 =  2 \sqrt{2}
\left(1 + \frac{1 - \delta^2}{1 - \delta - \theta}\right) 
+ (1 + 1/\sqrt{2})\frac{(1 + \delta)^2}{1 - \delta - \theta}$; 
We now define
\begin{eqnarray}
\label{eq::DS-constants-1}
C_1 & = & C_0' + \frac{1+ \delta}{1-\delta-\theta}  \text{ and } \\
\label{eq::DS-constants-3}
C_3^2 & = & 3 (\sqrt{1+a} + \tau^{-1})^2 ((C_0' + C_4)^2 +1) 
+ {4(1+a)}/{\Lambda_{\min}^2(2s_0)}
\end{eqnarray}
where $C_3$ has not been optimized.
Recall that $s_0$ is the smallest integer such that
$\sum_{i=1}^p 
\min(\beta_i^2, \lambda^2 \sigma^2) \leq s_0 \lambda^2 \sigma^2,$
where $\lambda = \sqrt{2 \log p/n}$.
We order the $\beta_j$'s in decreasing order of magnitude
\ben
\label{eq::beta-order}
|\beta_1| \geq |\beta_2| ... \geq |\beta_p|.
\een
Thus by definition of $s_0$, the fact $0 \leq s_0 \leq s$, we have 
for $s < p$, 
\ben
\label{eq::s0-upper-bound}
& & \; \; \; 
s_0 \lambda^2 \sigma^2 \leq
 \lambda^2 \sigma^2 + \sum_{i=1}^p \min(\beta_i^2, \lambda^2 \sigma^2) 
 \leq
2 \log p \left(\frac{\sigma^2}{n} + \sum_{i=1}^p 
\min\left(\beta_i^2, \frac{\sigma^2}{n}\right)\right) \\
\label{eq::s0-lower-bound}
& & \; \;  \; s_0 \lambda^2 \sigma^2  \geq \sum_{j=1}^{s_0 + 1} 
\min(\beta_j^2, \lambda^2 \sigma^2)
\geq (s_0 + 1) \min(\beta_{s_0 +1}^2, \lambda^2 \sigma^2)
\een
which implies that (as shown in~\cite{CT07}) that
$\min(\beta_{s_0 +1}^2, \lambda^2 \sigma^2) < \lambda^2 \sigma^2$ and 
hence by~\eqref{eq::beta-order}, it holds that
\ben
\label{eq::beta-2-small}
|\beta_j| < \lambda \sigma\; \; \; \text{ for all } j > s_0.
\een
\begin{lemma}
\label{lemma:threshold-DS}
Choose $\tau > 0$ such that $\delta_{2s} + \theta_{s,2s} < 1 - \tau$.
Let $\beta_{\init}$ be the solution to~\eqref{eq::DS-func} with 
$\lambda_n =  \lambda_{p,\tau} \sigma 
:= (\sqrt{1+a} + \tau^{-1}) \sqrt{2 \log p/n} \sigma$.
Given some constant $C_4 \geq C_1$, for $C_1$ as 
in~\eqref{eq::DS-constants-1}, choose a thresholding parameter $t_0$ 
such that 
$C_4 \lambda_{p,\tau} \sigma \geq t_0  > C_1 \lambda_{p,\tau} \sigma$
and set $I = \{j: \size{\beta_{j, \init}} \geq t_0\}.$
Then with probability at least $\prob{\T_a}$, 
as detailed in Proposition~\ref{prop:DS-oracle}, we have
~\eqref{eq::ideal-size}, and for $C_0'$ as in~\eqref{eq::DS-constants},
$\twonorm{\beta_{\drop}} \leq \sqrt{(C_0' + C_4)^2 + 1} 
\lambda_{p,\tau} \sigma \sqrt{s_0}$, where
$\drop := \{1, \ldots, p\} \setminus I.$
\end{lemma}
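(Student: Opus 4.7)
The plan is to combine Proposition~\ref{prop:DS-oracle} with a counting argument around the threshold $t_0$, after refining the bound on $\hat\beta-\beta$ into pieces supported on $T_0$ and on $T_0^c$, where $T_0$ denotes the positions of the $s_0$ largest $|\beta_j|$. First, observe that $|T_0|=s_0$, $T_0 \subseteq S$ (since $s_0 \leq s$), and by~\eqref{eq::beta-2-small} each $j \in T_0^c$ satisfies $|\beta_j| < \lambda\sigma \leq \lambda_{p,\tau}\sigma$. The defining inequality of $s_0$ then yields $\|\beta_{T_0^c}\|_2^2 = \sum_{j \in T_0^c}\min(\beta_j^2,\lambda^2\sigma^2) \leq s_0\lambda^2\sigma^2$, so $\|\beta_{T_0^c}\|_2 \leq \lambda_{p,\tau}\sigma\sqrt{s_0}$.

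The critical preliminary is to re-extract from the Cand\`{e}s-Tao chain of inequalities underlying Proposition~\ref{prop:DS-oracle} two separated bounds on the event $\T_a$: $\|(\hat\beta-\beta)_{T_0}\|_2 \leq C_0'\lambda_{p,\tau}\sigma\sqrt{s_0}$, coming from restricted invertibility of $X_{T_0}^T X_{T_0}/n$ controlled by $\delta_{2s}$; and a matching tail bound $\|(\hat\beta-\beta)_{T_0^c}\|_2 \leq (C_1-1)\lambda_{p,\tau}\sigma\sqrt{s_0}$, obtained from the cone-type constraint on $h_{T_0^c}$ together with a tail-decomposition of $T_0^c$ into blocks of size $s_0$ mediated by $\theta_{s,2s}$. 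The constant $C_1$ in~\eqref{eq::DS-constants-1} is engineered so that $C_1-1 = C_0' + (2\delta+\theta)/(1-\delta-\theta) \geq C_0'$ (since $2\delta+\theta \geq 0$), which is precisely what the counting step needs.

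With these two bounds in hand, the cardinality claim follows by splitting $I = (I \cap T_0) \cup (I \cap T_0^c)$; clearly $|I \cap T_0| \leq s_0$. For any $j \in I \cap T_0^c$, the threshold and the $T_0^c$-tail estimate on $\beta$ give $|\hat\beta_j - \beta_j| > C_1\lambda_{p,\tau}\sigma - \lambda_{p,\tau}\sigma = (C_1-1)\lambda_{p,\tau}\sigma$. Summing squared contributions yields $|I \cap T_0^c|(C_1-1)^2\lambda_{p,\tau}^2\sigma^2 \leq \|(\hat\beta-\beta)_{T_0^c}\|_2^2 \leq (C_1-1)^2\lambda_{p,\tau}^2\sigma^2 s_0$, hence $|I \cap T_0^c| \leq s_0$. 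Thus $|I| \leq 2s_0$, and since $T_0 \subseteq S$, $|I \setminus S| \leq |I \cap T_0^c| \leq s_0$.

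For the $\ell_2$ bound on $\beta_{\drop}$, decompose $\drop = (\drop \cap T_0) \cup (\drop \cap T_0^c)$. The tail satisfies $\|\beta_{\drop \cap T_0^c}\|_2 \leq \|\beta_{T_0^c}\|_2 \leq \lambda_{p,\tau}\sigma\sqrt{s_0}$. On $T_0$, the triangle inequality combined with the pointwise estimate $|\hat\beta_j| < t_0 \leq C_4\lambda_{p,\tau}\sigma$ for $j \in \drop$ yields $\|\beta_{\drop \cap T_0}\|_2 \leq \|(\hat\beta-\beta)_{T_0}\|_2 + \|\hat\beta_{\drop \cap T_0}\|_2 \leq (C_0' + C_4)\lambda_{p,\tau}\sigma\sqrt{s_0}$. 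Summing the two disjoint squared pieces then gives $\|\beta_\drop\|_2^2 \leq ((C_0' + C_4)^2 + 1)\lambda_{p,\tau}^2\sigma^2 s_0$, matching the claim. The main obstacle throughout is the bookkeeping required to isolate the two separated Cand\`{e}s-Tao bounds with the precise constants $C_0'$ and $C_1-1$: they are implicit in the combined oracle inequality of Proposition~\ref{prop:DS-oracle} but must be tracked carefully through the chain; once these are in place, everything else is an elementary counting and triangle-inequality exercise around the threshold $t_0$.
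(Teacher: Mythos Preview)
Your treatment of the $\twonorm{\beta_{\drop}}$ bound is essentially the paper's argument (note that on $T_0$ your $(\hat\beta-\beta)_{T_0}$ coincides with the paper's $h_{T_0}$ since $h=\hat\beta-\beta_{T_0}$), so that part is fine.

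The gap is in the counting of $|I\cap T_0^c|$. The separated tail bound you assert, $\twonorm{(\hat\beta-\beta)_{T_0^c}}\le (C_1-1)\lambda_{p,\tau}\sigma\sqrt{s_0}$, is not what the Cand\`es--Tao analysis delivers, and your algebraic remark that $C_1-1=C_0'+(2\delta+\theta)/(1-\delta-\theta)\ge C_0'$ does nothing to establish it. In that analysis the controlled object is $h:=\hat\beta-\beta_{T_0}$, so on $T_0^c$ one has $h_{T_0^c}=\hat\beta_{T_0^c}$, not $(\hat\beta-\beta)_{T_0^c}$. The bounds actually available (equations~\eqref{eq::DS-T01-2-bound}--\eqref{eq::DS-T0c-1-bound} and the displayed line below them) are $\twonorm{h_{T_{01}}}\le C_0'\lambda_{p,\tau}\sigma\sqrt{s_0}$, $\norm{h_{T_0^c}}_1\le C_1\lambda_{p,\tau}\sigma s_0$, and $\twonorm{h_{T_{01}^c}}\le C_1\lambda_{p,\tau}\sigma\sqrt{s_0}$. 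Piecing these together with $\twonorm{\beta_{T_0^c}}\le\lambda\sigma\sqrt{s_0}$ only yields $\twonorm{(\hat\beta-\beta)_{T_0^c}}\le (\sqrt{C_0'^2+C_1^2}+1)\lambda_{p,\tau}\sigma\sqrt{s_0}$, which is strictly larger than $(C_1-1)\lambda_{p,\tau}\sigma\sqrt{s_0}$; plugging this into your $\ell_2$ counting no longer gives $|I\cap T_0^c|\le s_0$.

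The paper avoids this entirely by counting with the $\ell_1$ bound instead: since $h_{T_0^c}=\hat\beta_{T_0^c}$ and $t_0>C_1\lambda_{p,\tau}\sigma$, one gets directly $|I\cap T_0^c|\le \norm{\hat\beta_{T_0^c}}_1/t_0=\norm{h_{T_0^c}}_1/t_0< s_0$. This is precisely why $C_1$ carries the extra term $(1+\delta)/(1-\delta-\theta)$: it is the constant in the $\ell_1$ tail bound, not in an $\ell_2$ bound. Replace your $\ell_2$ counting by this $\ell_1$ step and the rest of your write-up goes through unchanged.
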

It is clear by Lemma~\ref{lemma:threshold-DS} that we cannot 
cut too many ``significant'' variables; in particular, for those that
are $> \lambda \sigma \sqrt{s_0}$,  we can cut at most a constant number of 
them. Next we show that even if we miss some 
columns of $X$ in $S$, we can still hope to get the $\ell_2$ loss
as required in Theorem~\ref{thm:ideal-MSE-prelude} so long as 
$\twonorm{\beta_{\drop}}$ 
is bounded, for example, as bounded in Lemma~\ref{lemma:threshold-DS},
and $I$ is sufficiently sparse.
Now Theorem~\ref{thm:ideal-MSE-prelude} is an immediate 
corollary of Lemma~\ref{lemma:threshold-DS} and~\ref{prop:MSE-missing} 
in view of~\eqref{eq::s0-upper-bound}. See Section~\ref{sec:append-guass}
for its proof.
We note that Lemma~\ref{prop:MSE-missing} yields a general result on 
the $\ell_2$ loss for the OLS estimator, when a subset of relevant variables 
is missing from the chosen model $I$; this is also an important technical 
contribution of this paper. 
\begin{lemma}{\textnormal{({\bf OLS estimator with missing variables})}}
\label{prop:MSE-missing}
Suppose that~\eqref{eq::eigen-admissible-s} and~\eqref{eq::eigen-max}
hold. Let $\drop := \{1, \ldots, p\} \setminus I$ and $S_\drop = \drop \cap S$ 
such that $I \cap S_{\drop} = \emptyset$.
Suppose $|I \cup S_{\drop}| \leq 2s$. Then, 
for $\hat\beta_{I} = (X_I^T X_{I})^{-1} X_{I}^T Y$, it holds on $\T_a$ that 
$$
\twonorm{\hat{\beta}_I - \beta}^2 \leq 
{\left(\theta_{|I|, |\dropS|} \twonorm{\beta_{\drop}} + 
\basepen \sqrt{|I|}\right)^2}/{\Lambda_{\min}^2(|I|)} + 
\twonorm{\beta_{\drop}}^2.$$
\end{lemma}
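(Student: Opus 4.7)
The plan is to decompose $\hat\beta - \beta$ into the component supported on $I$ and its complement, and treat these separately. Since $\hat\beta_{I^c}=0$ by construction, the contribution on $I^c$ is simply $-\beta_{I^c}=-\beta_\drop$, which produces the additive $\twonorm{\beta_\drop}^2$ term of the bound via the orthogonal Pythagorean split
\[
\twonorm{\hat\beta-\beta}^2 \;=\; \twonorm{\hat\beta_I-\beta_I}^2 + \twonorm{\beta_\drop}^2.
\]
All the remaining work is to bound $\twonorm{\hat\beta_I-\beta_I}$ by $(\theta_{|I|,|\dropS|}\twonorm{\beta_\drop}+\basepen\sqrt{|I|})/\Lambda_{\min}(|I|)$.

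To that end, I would plug $Y=X_I\beta_I + X_{\dropS}\beta_{\dropS} + \e$ into the OLS formula, using that $\beta$ vanishes off $S$ so $X_{I^c}\beta_{I^c}=X_{\dropS}\beta_{\dropS}$, to obtain the key decomposition
\[
\hat\beta_I-\beta_I \;=\; (X_I^TX_I)^{-1} X_I^T X_{\dropS}\beta_{\dropS} \;+\; (X_I^TX_I)^{-1} X_I^T \e.
\]
I would then bound each piece separately. For any vector $A=(X_I^TX_I)^{-1}v$, the inequality $n\Lambda_{\min}(|I|)\twonorm{A}^2 \le A^T X_I^T X_I A = A^T v$ lets one convert a bound on $A^T v$ into a bound on $\twonorm{A}$ after dividing by $\twonorm{A}$.

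For the bias-from-missing-variables term, I take $v = X_I^T X_{\dropS}\beta_{\dropS}$. The identity $A^T v = \langle X_I A, X_{\dropS}\beta_{\dropS}\rangle$ together with the restricted orthogonality bound \eqref{label:correlation-coefficient}, which is applicable because $I$ and $\dropS$ are disjoint with $|I\cup\dropS|\le 2s$, gives $A^T v \le n\,\theta_{|I|,|\dropS|}\twonorm{A}\twonorm{\beta_\drop}$ (using $\twonorm{\beta_{\dropS}}=\twonorm{\beta_\drop}$). Dividing through yields $\twonorm{A}\le \theta_{|I|,|\dropS|}\twonorm{\beta_\drop}/\Lambda_{\min}(|I|)$. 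For the noise term, take $B=(X_I^TX_I)^{-1}X_I^T\e$; on the event $\T_a$ we have $\|X^T\e/n\|_\infty \le \basepen$, hence $\twonorm{X_I^T\e}\le \sqrt{|I|}\|X_I^T\e\|_\infty\le n\basepen\sqrt{|I|}$, and applying the same quadratic-form trick to $B^T X_I^T X_I B = B^T X_I^T \e$ gives $\twonorm{B}\le \basepen\sqrt{|I|}/\Lambda_{\min}(|I|)$.

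Finally, a triangle inequality combines the two pieces into the claimed bound on $\twonorm{\hat\beta_I-\beta_I}$, and squaring and adding $\twonorm{\beta_\drop}^2$ delivers the lemma. The only subtle point, and the main thing to verify carefully, is that the hypotheses justify invoking both \eqref{label:correlation-coefficient} with parameter $\theta_{|I|,|\dropS|}$ and the lower eigenvalue bound $\Lambda_{\min}(X_I^TX_I/n)\ge \Lambda_{\min}(|I|)$; this is exactly what the assumption $|I\cup \dropS|\le 2s$ together with \eqref{eq::eigen-admissible-s} and \eqref{eq::eigen-max} provides, since $|I|\le 2s$ and the pair $(I,\dropS)$ is a disjoint decomposition inside a set of size at most $2s$.
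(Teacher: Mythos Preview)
Your proposal is correct and follows essentially the same route as the paper: the paper makes the identical Pythagorean split $\twonorm{\hat\beta_I-\beta}^2=\twonorm{\hat\beta_I-\beta_I}^2+\twonorm{\beta_\drop}^2$, the same OLS decomposition into a missing-variables bias term and a noise term, and bounds each piece exactly as you do via the restricted orthogonality constant and the minimum eigenvalue. The only cosmetic difference is that the paper phrases the bias bound through the projection $P_I$ (writing $c=(X_I^TX_I)^{-1}X_I^TX_{\dropS}\beta_{\dropS}$ so that $X_Ic=P_IX_{\dropS}\beta_{\dropS}$), whereas your quadratic-form trick $n\Lambda_{\min}(|I|)\twonorm{A}^2\le A^Tv$ reaches the same inequality in one line.
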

We note that Lemma~\ref{prop:MSE-missing} applies to $X$
so long as conditions~\eqref{eq::eigen-admissible-s} and~\eqref{eq::eigen-max}
hold, which guarantees that $\theta_{|I|, |S_{\drop}|}$ is bounded within a 
reasonable constant, when $|I| + |S_{\drop}| \leq 2s$ (cf. Lemma~\ref{lemma:parallel}).
It is clear from Lemma~\ref{prop:MSE-missing} 
and Theorem~\ref{THM:PREDITIVE-ERROR}
that, except for the constants that appear before each term, 
namely, $\twonorm{\beta_{\drop}}$ and $\sqrt{|I|}\sqrt{2 \log p} \sigma$,
the bias and variance tradeoffs for the prediction error and the 
$\ell_2$ loss  follow roughly the same trend in their upper bounds.
It will make sense to take a look at the bound on prediction error for the 
Gauss-Dantzig selector stated in Corollary~\ref{cor:uup-pred}, 
which follows immediately from Theorem~\ref{THM:PREDITIVE-ERROR} 
and Lemma~\ref{lemma:threshold-DS}. 
\begin{corollary}
\label{cor:uup-pred}
Under conditions in Theorem~\ref{thm:ideal-MSE-prelude},
the Gauss-Dantzig selector chooses $I$, where $\size{I} \leq 2s_0$,
such that for the OLS estimator $\hat{\beta}$ based on $I$,  we have
$\twonorm{X \hat{\beta}_I - X \beta}/\sqrt{n} \leq 
C_5 \sqrt{s_0} \lambda \sigma$, 
where $C_5 = \sqrt{\Lambda_{\max}(s)} 
(\sqrt{(C_0' + C_4)^2 + 1} (\sqrt{1+a} + \tau^{-1})) + f(I)$, where
$f(I) := {\sqrt{ 2 (1+ a) \Lambda_{\max}(|I|)}}/{ \Lambda_{\min}(|I|)}$.
\end{corollary}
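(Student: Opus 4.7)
The plan is to simply combine the first inequality in Theorem~\ref{THM:PREDITIVE-ERROR} with the bounds provided by Lemma~\ref{lemma:threshold-DS}, and then re-express the result in terms of $\lambda = \sqrt{2\log p/n}$. Since Theorem~\ref{thm:ideal-MSE-prelude} is exactly the output of the Gauss--Dantzig thresholding step to which Lemma~\ref{lemma:threshold-DS} applies, all hypotheses match directly, and no additional work is required to set up the geometry; the content of the corollary is an accounting exercise.

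First, from Theorem~\ref{THM:PREDITIVE-ERROR} (using only the first, non-asymptotic bound rather than the consolidated constant $C$), on the event $\T_a$ we have
\[
\frac{\twonorm{X\hat\beta_I - X\beta}}{\sqrt n} \;\leq\; \sqrt{\Lambda_{\max}(s)}\,\twonorm{\beta_{\drop}} \;+\; \frac{\sqrt{|I|\,\Lambda_{\max}(|I|)}\,\basepen}{\Lambda_{\min}(|I|)}.
\]
Next, I invoke Lemma~\ref{lemma:threshold-DS}, which holds on the same event $\T_a$ (probability at least $1 - (\sqrt{\pi\log p}\,p^a)^{-1}$) and gives the two key facts
\[
|I| \leq 2s_0, \qquad \twonorm{\beta_{\drop}} \leq \sqrt{(C_0' + C_4)^2 + 1}\,\lambda_{p,\tau}\sigma\sqrt{s_0},
\]
with $\lambda_{p,\tau} = (\sqrt{1+a} + \tau^{-1})\lambda$ by definition.

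Now I substitute. Recalling from~\eqref{eq::low-noise} that $\basepen = \sigma\sqrt{1+a}\,\lambda$, the first term becomes
\[
\sqrt{\Lambda_{\max}(s)}\cdot\sqrt{(C_0'+C_4)^2+1}\,(\sqrt{1+a}+\tau^{-1})\,\lambda\sigma\sqrt{s_0},
\]
while the second term, after applying $|I|\leq 2s_0$ and substituting $\basepen$, is bounded by
\[
\frac{\sqrt{2s_0\,\Lambda_{\max}(|I|)}\,\sigma\sqrt{1+a}\,\lambda}{\Lambda_{\min}(|I|)} \;=\; \frac{\sqrt{2(1+a)\,\Lambda_{\max}(|I|)}}{\Lambda_{\min}(|I|)}\,\lambda\sigma\sqrt{s_0} \;=\; f(I)\,\lambda\sigma\sqrt{s_0}.
\]
Adding these two contributions and factoring out $\sqrt{s_0}\,\lambda\sigma$ yields exactly $C_5\sqrt{s_0}\,\lambda\sigma$ with the constant $C_5$ as defined in the statement.

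There is essentially no conceptual obstacle here: the only thing to be careful about is matching the definitions of $\lambda_{p,\tau}$ and $\basepen$ in terms of $\lambda$, and using $|I|\leq 2s_0$ exactly once to absorb the factor of $2$ into the $\sqrt{1+a}$ appearing in $f(I)$. Everything else is a single line of substitution, so the corollary is indeed immediate from the two cited results.
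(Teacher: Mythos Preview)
Your proposal is correct and follows exactly the approach the paper itself indicates: the paper states that Corollary~\ref{cor:uup-pred} ``follows immediately from Theorem~\ref{THM:PREDITIVE-ERROR} and Lemma~\ref{lemma:threshold-DS},'' and you have filled in precisely those substitutions (the bound on $\twonorm{\beta_{\drop}}$ and $|I|\le 2s_0$ from Lemma~\ref{lemma:threshold-DS}, together with $\basepen = \sqrt{1+a}\,\lambda\sigma$ and $\lambda_{p,\tau}=(\sqrt{1+a}+\tau^{-1})\lambda$) to arrive at $C_5$ and $f(I)$ as stated.
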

\silent{
where $\drop := \{1, \ldots, p\} \setminus I$,
where from~\eqref{eq::MSE-missing} and 
Theorem~\ref{THM:PREDITIVE-ERROR}, we have
\bens
\twonorm{\hat{\beta}_I - \beta}^2
& \leq &  
(\frac{2 \theta_{|I|, |\dropS|}^2}{\Lambda_{\min}(|I|)^2} + 1)
\twonorm{\beta_{\drop}}^2 +  
\frac{2 \basepen^2  |I|}{\Lambda_{\min}(|I|)^2} \text{ while } \\
\twonorm{ X \hat\beta  - X \beta}^2/n 
& \leq &  
2 \Lambda_{\max}(s) 
\twonorm{\beta_{\drop}}^2 +  
\frac{2 \Lambda_{\max}(|I|)^2 \basepen^2  |I|}{\Lambda_{\min}(|I|)^2}
\eens
}
\section{On sparse oracle inequalities of the Lasso under the RE condition}
\label{sec:oracle-lasso}
In this section, in order to prove Theorem~\ref{thm:RE-oracle-main},
we first show in Theorem~\ref{thm:RE-oracle} that under the RE condition, 
the Lasso estimator achieves essentially the same type of oracle properties 
as the Dantzig selector (under UUP). This result is new to the best of our 
knowledge; it improves upon a result in~\cite{BRT09}  (cf. Theorem 7.2)
under slightly different RE conditions, and thus may be of independent 
interests.
The sparse oracle properties of the Thresholded Lasso in terms of 
variable selection, $\ell_2$ loss, and prediction error then all follow naturally 
from Theorem~\ref{thm:RE-oracle}, Lemma~\ref{lemma:threshold-RE} and 
Lemma~\ref{prop:MSE-missing} as derived in Section~\ref{sec:DS-threshold}.
The proof of Theorem~\ref{thm:RE-oracle} draws upon techniques from a 
concurrent work in~\cite{GBZ10}, where a stronger condition is required, 
while deriving bounds similar to the present  paper.
\begin{theorem}\textnormal{\bf (Oracle inequalities of the Lasso)}
\label{thm:RE-oracle}
Let $Y = X \beta + \e$, for $\e$ being i.i.d. $N(0, \sigma^2)$ and
$\twonorm{X_j} = \sqrt{n}$. 
Let $s_0$ be as in~\eqref{eq::define-s0} and $T_0$
denote
locations of the $s_0$ largest coefficients of $\beta$ in absolute values.
Suppose that $RE(s_0, 6, X)$ holds with $K(s_0, 6)$,
and~\eqref{eq::eigen-admissible-s} and~\eqref{eq::eigen-max} hold.
Let $\beta_{\init}$ be an optimal solution to~\eqref{eq::origin} with 
$\lambda_{n} = d_0 \lambda \sigma \geq 2 \basepen$, where 
 $a \geq 0$ and $d_0 \geq 2 \sqrt{1 + a}$. 
Let $h = \beta_{\init} - \beta_{T_0}$. Then on $\T_a$ as in~\eqref{eq::low-noise},
we have for $\Lambda_{\max} :=\Lambda_{\max}(s- s_0)$,
\bens
\twonorm{\beta_{\init} - \beta}^2 
& \leq &  2 \lambda^2 \sigma^2  s_0(D_0^2 + D_1^2 + 1), \\
\norm{h_{T_0}}_1 + \norm{\beta_{\init, T_0^c}}_1 
& \leq &  \left(\frac{2 \Lambda_{\max}}{d_0} + 
\max\left\{8K^2(s_0, 6) d_0, 
\frac{ \Lambda_{\max}}{ 3d_0}\right\} \right) \lambda \sigma  s_0, \\
\twonorm{ X \beta_{\init} - X \beta }/\sqrt{n}
& \leq & 
\lambda \sigma \sqrt{s_0}  (\sqrt{\Lambda_{\max}}  + 3 d_0 K(s_0, 6))
\eens
where $D_0, D_1$ are defined  in~\eqref{eq::D0-define} 
and~\eqref{eq::D1-define}.
Moreover, for any subset $I_0 \subset S$, 
by assuming that $RE(|I_0|, 6, X)$ holds with $K(|I_0|, 6)$, we have
\ben
\label{eq::pred-error-gen}
\twonorm{ X \beta_{\init} - X \beta}^2/n
\leq 2 \twonorm{X \beta - X \beta_{I_0}}^2/n  + 9 \lambda_n^2  |I_0| K^2(|I_0|, 6).
\een
\end{theorem}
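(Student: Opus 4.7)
The plan is to run a standard Lasso basic-inequality argument, but with two twists tailored to the fact that $\beta$ need not be $s_0$-sparse: (i) use $\beta_{T_0}$ (the $s_0$-sparse oracle) as the competitor rather than $\beta$ itself, and (ii) exploit a cross-term cancellation that converts the bias contribution from $\beta_{T_0^c}$ into a clean additive term $\|X\beta_{T_0^c}\|^2/n$. Concretely, I start from Lasso optimality $\frac{1}{2n}\|Y - X\beta_{\init}\|^2 + \lambda_n\|\beta_{\init}\|_1 \le \frac{1}{2n}\|Y - X\beta_{T_0}\|^2 + \lambda_n\|\beta_{T_0}\|_1$, write $Y - X\beta_{T_0} = X\beta_{T_0^c} + \e$, expand, and apply the triangle inequality $\|\beta_{T_0}\|_1 - \|\beta_{\init}\|_1 \le \|h_{T_0}\|_1 - \|h_{T_0^c}\|_1$ where $h := \beta_{\init} - \beta_{T_0}$. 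On $\T_a$ the noise term satisfies $|\langle\e, Xh\rangle/n| \le \lambda_{\sigma,a,p}\|h\|_1 \le (\lambda_n/2)\|h\|_1$ since $\lambda_n \ge 2\lambda_{\sigma,a,p}$.

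The crucial algebraic step is the identity $\|Xh\|^2 - 2\langle Xh, X\beta_{T_0^c}\rangle = \|X\tilde h\|^2 - \|X\beta_{T_0^c}\|^2$ with $\tilde h = h - \beta_{T_0^c} = \beta_{\init} - \beta$, which eliminates the troublesome cross term and produces the master inequality
\[
\frac{1}{2n}\|X\tilde h\|^2 + \frac{\lambda_n}{2}\|h_{T_0^c}\|_1 \;\le\; \frac{1}{2n}\|X\beta_{T_0^c}\|^2 + \frac{3\lambda_n}{2}\|h_{T_0}\|_1.
\]
Writing $u' = \|X\tilde h\|/\sqrt n$, $u = \|Xh\|/\sqrt n$, $v = \|X\beta_{T_0^c}\|/\sqrt n \le \sqrt{\Lambda_{\max}(s-s_0)}\|\beta_{T_0^c}\|_2 \le \sqrt{\Lambda_{\max}}\lambda\sigma\sqrt{s_0}$ (using $\|\beta_{T_0^c}\|_2^2 \le s_0\lambda^2\sigma^2$ from the definition of $s_0$ and the fact $|\beta_i| < \lambda\sigma$ for $i > s_0$), I split on cases. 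Case~A: $v^2 \le 3\lambda_n\|h_{T_0}\|_1$. Then the $\|h_{T_0^c}\|_1$ bound collapses to $\|h_{T_0^c}\|_1 \le 6\|h_{T_0}\|_1$, so $h$ lies in the $RE(s_0,6)$ cone and one gets $\|h_{T_0}\|_1 \le \sqrt{s_0}\,K(s_0,6)\,u \le \sqrt{s_0}\,K(s_0,6)(u'+v)$ by the triangle inequality $u \le u'+v$. Substituting back produces the quadratic $(u')^2 \le v^2 + 3\lambda_n\sqrt{s_0}K(u'+v)$; completing the square on the left and recognizing the right-hand side as $(v + 3\lambda_n\sqrt{s_0}K/2)^2 - (3\lambda_n\sqrt{s_0}K/2)^2 + \ldots$ yields the clean bound $u' \le v + 3\lambda_n\sqrt{s_0}K(s_0,6)$, which is exactly the stated prediction bound. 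Case~B: $v^2 > 3\lambda_n\|h_{T_0}\|_1$. Dropping the (nonnegative) $(u')^2/2$ term gives $\|h_{T_0^c}\|_1 \le 2v^2/\lambda_n$ (hence $\le 2\Lambda_{\max}\lambda\sigma s_0/d_0$) and $u' \le \sqrt 2\,v$; the prediction bound then follows since $\sqrt 2\sqrt{\Lambda_{\max}} \le \sqrt{\Lambda_{\max}} + 3d_0K$ under the standing constants.

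From these two cases the $\ell_1$ bound follows directly: in Case~A combine $\|h\|_1 \le 7\|h_{T_0}\|_1$ with RE and one round of AM--GM to isolate $\lambda_n s_0 K^2$ and $\Lambda_{\max}/d_0$ contributions; in Case~B the estimate is already of the required form. The $\ell_2$ bound on $\tilde h$ is obtained via $\|\tilde h\|_2^2 \le 2\|h\|_2^2 + 2\|\beta_{T_0^c}\|_2^2$ where $\|\beta_{T_0^c}\|_2^2 \le s_0\lambda^2\sigma^2$ and $\|h\|_2$ is controlled by the BRT dyadic decomposition: partition $T_0^c$ into blocks of size $s_0$ by decreasing magnitude of $|h_i|$ to get $\sum_{j\ge 2}\|h_{T_j}\|_2 \le \|h_{T_0^c}\|_1/\sqrt{s_0}$, and use $RE(s_0,6)$ on $T_0 \cup T_1$ together with the bound on $u$. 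Finally, the general inequality with any $I_0 \subset S$ is proved by simply repeating the argument verbatim with $\beta_{I_0}$ replacing $\beta_{T_0}$; the same cross-term cancellation produces $(u')^2 \le v'^2 + 3\lambda_n\sqrt{|I_0|}K(|I_0|,6)(u'+v')$ with $v' = \|X(\beta-\beta_{I_0})\|/\sqrt n$, and the stated $2\|X(\beta - \beta_{I_0})\|^2/n + 9\lambda_n^2|I_0|K^2(|I_0|,6)$ form follows by applying $(a+b)^2 \le 2a^2 + 2b^2$ and AM--GM. The main obstacle is the case analysis in Case~B together with verifying that the cone condition is what naturally emerges even though $\tilde h$ itself is not obviously in any cone; the identity converting $(u,v)$-cross terms into $(u',v)$-square terms is what unlocks the whole argument.
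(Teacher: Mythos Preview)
Your proposal is correct and follows essentially the same approach as the paper's proof: Lasso optimality with competitor $\beta_{T_0}$, the master inequality $\twonorm{X\tilde h}^2/n + \lambda_n\|h_{T_0^c}\|_1 \le \twonorm{X\beta_{T_0^c}}^2/n + 3\lambda_n\|h_{T_0}\|_1$, the same two-case split on whether $v^2 \gtrless 3\lambda_n\|h_{T_0}\|_1$, $RE(s_0,6)$ applied to $h$ (not $\tilde h$) in Case~A, completion of the square for the prediction bound, and the BRT dyadic block decomposition (the paper's Lemma on $\twonorm{h_{T_{01}}}$) for the $\ell_2$ bound. The only presentational difference is that the paper expands $\twonorm{Y - X\hat\beta}^2$ around $\beta$ directly, so $\twonorm{X\tilde h}^2$ appears immediately and the cross-term identity you call ``crucial'' is never needed---your detour through $\twonorm{Xh}^2 - 2\langle Xh, X\beta_{T_0^c}\rangle$ is harmless but unnecessary.
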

Let $T_1$ denote the $s_0$ largest positions of $h$  in absolute values 
outside of $T_0$; Let $T_{01} := T_0 \cup T_1$. 
The proof of Theorem~\ref{thm:RE-oracle} yields the following bounds:
for $K :=  K(s_0, 6)$, 
$\twonorm{h_{T_{01}}} \leq D_0 \lambda \sigma \sqrt{s_0}$
and
$\norm{h_{T_0^c}}_1 \leq D_1 \lambda \sigma s_0$ where
\ben
\label{eq::D0-define}
& & D_0 = \max \{D, K \sqrt{2} (2 \sqrt{\Lambda_{\max}(s- s_0)}  + 3 d_0 K)\}, \\
\nonumber
& & \text{ where } D  =(\sqrt{2} + 1) \frac{\sqrt{\Lambda_{\max}(s - s_0)}}
{\sqrt{\Lambda_{\min}(2s_0) }}
+ \frac{\theta_{s_0, 2s_0} \Lambda_{\max}(s - s_0)}
{\Lambda_{\min}(2s_0)} \text{ and } \\
\label{eq::D1-define}
& & D_1 = 2 \Lambda_{\max}(s- s_0)/d_0 + 9 K^2 d_0/2.
\een
The proof of Lemma~\ref{lemma:threshold-RE} follows exactly that 
of  Lemma~\ref{lemma:threshold-DS}, and hence omitted.
We then state the bound on prediction error for $\hat{\beta}$ for the 
Thresholded Lasso, which follows immediately from 
Theorem~\ref{THM:PREDITIVE-ERROR} 
and Lemma~\ref{lemma:threshold-RE}.
\begin{lemma}
\label{lemma:threshold-RE}
Suppose that $X$ obeys $RE(s_0, 6, X)$, 
and conditions~\eqref{eq::eigen-admissible-s} 
and~\eqref{eq::eigen-max} hold.
Let $\beta_{\init}$ be an optimal solution to~\eqref{eq::origin} with 
$\lambda_{n} = d_0 \lambda \sigma \geq 2 \basepen$, where 
$a \geq 0$, $d_0 \geq 2 \sqrt{1 + a}$, and $\lambda := \sqrt{2 \log p/n}$
as in Theorem~\ref{thm:RE-oracle}.
Suppose that we choose $t_0 = C_4 \lambda \sigma$ for some positive 
constant $C_4$.
Let $I = \{j: |\beta_{j, \init} \geq t_0\}$ and $\drop := \{1, \ldots, p\} \setminus I$.
Then we have on $\T_a$,
\begin{eqnarray}
\nonumber
|I| & \leq &  s_0 (1 + D_1/C_4)    \text{ and } \; \; |I \cup S|   \leq  s + D_1 s_0/C_4
\; \text{ and } 
\end{eqnarray}
$\twonorm{\beta_{\drop}} \leq \sqrt{(D_0 + C_4)^2 + 1} 
\lambda \sigma \sqrt{s_0}$, where $D_0, D_1$ are as defined 
in~\eqref{eq::D0-define} and~\eqref{eq::D1-define}.
\end{lemma}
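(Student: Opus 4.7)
The plan is to derive all three conclusions directly from the two key bounds supplied by Theorem~\ref{thm:RE-oracle}, namely $\twonorm{h_{T_{01}}} \leq D_0 \lambda \sigma \sqrt{s_0}$ and $\norm{h_{T_0^c}}_1 \leq D_1 \lambda \sigma s_0$, where $h = \beta_{\init} - \beta_{T_0}$ and $T_0$ indexes the $s_0$ largest-in-magnitude coefficients of $\beta$. Because $T_0 \subseteq S$ (as $s_0 \leq s$ and $\beta$ is $s$-sparse) and $\beta_{T_0^c}$ excludes the top coefficients, the proof reduces to combining these $\ell_1$ and $\ell_2$ controls with the thresholding rule, exactly in the spirit of Lemma~\ref{lemma:threshold-DS}.

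First I would bound $|I|$. Split $I = (I \cap T_0) \cup (I \cap T_0^c)$ and note $|I \cap T_0| \leq |T_0| = s_0$. For $j \in I \cap T_0^c$ we have $\beta_j = 0$ (since $T_0^c$ avoids the largest $s_0$ entries only if I restrict to $S$; more carefully, on $T_0^c$ the initial estimator coincides with $h$ there because $\beta_{T_0}$ is zero on $T_0^c$), so $|h_j| = |\beta_{j,\init}| \geq t_0 = C_4 \lambda \sigma$. Therefore
\[
C_4 \lambda \sigma \cdot |I \cap T_0^c| \;\leq\; \sum_{j \in I \cap T_0^c} |h_j| \;\leq\; \norm{h_{T_0^c}}_1 \;\leq\; D_1 \lambda \sigma s_0,
\]
giving $|I \cap T_0^c| \leq D_1 s_0/C_4$ and hence $|I| \leq s_0(1 + D_1/C_4)$. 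Since $T_0 \subseteq S$, the extras outside $S$ satisfy $|I \setminus S| \leq |I \cap T_0^c| \leq D_1 s_0/C_4$, yielding $|I \cup S| \leq s + D_1 s_0/C_4$.

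Next I would bound $\twonorm{\beta_{\drop}}$ by decomposing $\drop = (\drop \cap T_0) \cup (\drop \cap T_0^c)$. On $\drop \cap T_0$ every coordinate was thresholded out, so $|\beta_{j,\init}| < C_4 \lambda \sigma$, and the triangle inequality gives
\[
\twonorm{\beta_{\drop \cap T_0}} \;\leq\; \twonorm{h_{\drop \cap T_0}} + \twonorm{\beta_{\init,\drop \cap T_0}} \;\leq\; \twonorm{h_{T_{01}}} + C_4 \lambda \sigma \sqrt{s_0} \;\leq\; (D_0 + C_4)\lambda \sigma \sqrt{s_0}.
\]
On $\drop \cap T_0^c$, the ordering~\eqref{eq::beta-order} and~\eqref{eq::beta-2-small} imply $|\beta_j| < \lambda \sigma$ for all $j > s_0$, so $\twonorm{\beta_{T_0^c}}^2 \leq \sum_j \min(\beta_j^2,\lambda^2 \sigma^2) \leq s_0 \lambda^2 \sigma^2$ by the definition~\eqref{eq::define-s0} of $s_0$. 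Combining both pieces via the Pythagorean identity yields $\twonorm{\beta_{\drop}}^2 \leq ((D_0+C_4)^2 + 1)\lambda^2 \sigma^2 s_0$, as claimed.

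The only delicate step is keeping straight that $\beta_{\init,T_0^c} = h_{T_0^c}$ (because $\beta_{T_0}$ is zero off $T_0$), which is what lets me convert the $\ell_1$ bound on $h_{T_0^c}$ into a cardinality bound on $I \cap T_0^c$; and symmetrically, on $\drop \cap T_0$ I need the $\ell_2$ bound on $h_{T_0}$ (covered by $\twonorm{h_{T_{01}}}$) together with the per-coordinate threshold bound. Nothing here uses the RE constant directly — all the RE-dependent work is already absorbed into $D_0$ and $D_1$ via Theorem~\ref{thm:RE-oracle} — so the argument is genuinely identical to the one for Lemma~\ref{lemma:threshold-DS} with $(C_0',C_1)$ replaced by $(D_0,D_1)$.
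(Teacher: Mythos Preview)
Your proposal is correct and follows essentially the same route as the paper: the paper states that the proof of Lemma~\ref{lemma:threshold-RE} ``follows exactly that of Lemma~\ref{lemma:threshold-DS}'' with $(C_0',C_1)$ replaced by $(D_0,D_1)$, and your argument reproduces precisely that---the cardinality bound via $\norm{h_{T_0^c}}_1$ and the $\twonorm{\beta_\drop}$ bound via the $T_0/T_0^c$ split and the triangle inequality on $\drop\cap T_0$. The only cosmetic difference is that the paper routes the $\twonorm{\beta_\drop}$ bound through the general inequality~\eqref{eq::off-beta-norm-bound-2} of Lemma~\ref{lemma:threshold-general}, whereas you derive the same decomposition directly.
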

\begin{corollary}
\label{eq::RE-pred}
Under conditions in Theorem~\ref{thm:RE-oracle-main}, the
Thresholded Lasso  chooses $I$, where   $\size{I} \leq 2s_0$, 
such that for the OLS estimator $\hat{\beta}$ based on $I$, 
it holds that
$\twonorm{X \hat{\beta}_I - X \beta}/\sqrt{n} \leq 
C_6 \sqrt{s_0} \lambda \sigma$,
where 
$C_6 = \sqrt{\Lambda_{\max}(s)}\sqrt{(D_0 + C_4)^2 + 1}  + f(I)$,
for $f(I)$ as defined in Corollary~\ref{cor:uup-pred} and $D_0$ is defined 
in~\eqref{eq::D0-define}.
\end{corollary}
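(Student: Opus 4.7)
The plan is to compose the prediction-error inequality from Theorem~\ref{THM:PREDITIVE-ERROR} with the two bounds on $\twonorm{\beta_{\drop}}$ and on $|I|$ provided by Lemma~\ref{lemma:threshold-RE}, and then to check the cardinality constraint $|I| \leq 2s_0$ using the hypothesis $C_4 \geq D_1$ of Theorem~\ref{thm:RE-oracle-main}.

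First, since Theorem~\ref{thm:RE-oracle-main} operates under exactly the hypotheses required by Lemma~\ref{lemma:threshold-RE} (namely $RE(s_0, 6, X)$, the eigenvalue bounds~\eqref{eq::eigen-admissible-s} and~\eqref{eq::eigen-max}, together with $\lambda_n = d_0 \lambda \sigma \geq 2\basepen$ and the thresholding rule $t_0 = C_4 \lambda \sigma$), I invoke Lemma~\ref{lemma:threshold-RE} to obtain, on $\T_a$,
\[
|I| \;\leq\; s_0 (1 + D_1/C_4), \qquad \twonorm{\beta_{\drop}} \;\leq\; \sqrt{(D_0 + C_4)^2 + 1}\; \lambda \sigma \sqrt{s_0}.
\]
Because Theorem~\ref{thm:RE-oracle-main} assumes $C_4 \geq D_1$, the first bound yields $|I| \leq 2s_0$, establishing the cardinality claim.

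Next, I apply Theorem~\ref{THM:PREDITIVE-ERROR} with this set $I$ (the hypotheses of Theorem~\ref{thm:RE-oracle-main} are precisely the hypotheses under which Theorem~\ref{THM:PREDITIVE-ERROR} is applicable). This gives, again on $\T_a$,
\[
\frac{\twonorm{X \hat{\beta}_I - X \beta}}{\sqrt{n}} \;\leq\; \sqrt{\Lambda_{\max}(s)}\, \twonorm{\beta_{\drop}} \;+\; \frac{\sqrt{|I|\, \Lambda_{\max}(|I|)}\, \basepen}{\Lambda_{\min}(|I|)}.
\]
For the first summand I substitute the bound on $\twonorm{\beta_{\drop}}$ from Lemma~\ref{lemma:threshold-RE}, which produces exactly the term $\sqrt{\Lambda_{\max}(s)}\sqrt{(D_0+C_4)^2+1}\,\lambda \sigma \sqrt{s_0}$.

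For the second summand I recall from~\eqref{eq::low-noise} that $\basepen = \sigma \sqrt{1+a}\,\lambda$, and I use $|I| \leq 2s_0$ established above, so that
\[
\frac{\sqrt{|I|\,\Lambda_{\max}(|I|)}\,\basepen}{\Lambda_{\min}(|I|)} \;\leq\; \frac{\sqrt{2(1+a)\,\Lambda_{\max}(|I|)}}{\Lambda_{\min}(|I|)}\, \lambda \sigma \sqrt{s_0} \;=\; f(I)\, \lambda \sigma \sqrt{s_0},
\]
matching the definition of $f(I)$ from Corollary~\ref{cor:uup-pred}. Summing the two contributions yields the claimed bound with the stated constant $C_6$. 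There is no real obstacle here; the only care needed is to verify that the numerical simplification hides $\sqrt{2(1+a)}$ inside $f(I)$ rather than leaving an extra $\sqrt{2}$ factor, which is the reason one needs the cardinality control $|I|\leq 2s_0$ precisely in the form that $C_4 \geq D_1$ provides.
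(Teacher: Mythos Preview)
Your proposal is correct and follows essentially the same approach as the paper, which states that the corollary ``follows immediately from Theorem~\ref{THM:PREDITIVE-ERROR} and Lemma~\ref{lemma:threshold-RE}.'' You have fleshed out precisely the two-step composition the paper has in mind, including the use of $C_4 \geq D_1$ to get $|I| \leq 2s_0$ and the identification of the $f(I)$ factor.
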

We now state Lemma~\ref{lemma:parallel}, which follows from~\cite{CT05} (Lemma 1.2); we then prove Theorem~\ref{thm:RE-oracle-main}, where 
we give an explicit expression for $D_3$.
\begin{lemma}
\textnormal{\citep{CT05}}
\label{lemma:parallel}
Suppose that~\eqref{eq::eigen-admissible-s} and~\eqref{eq::eigen-max} hold.
Then for all disjoint sets $I,S_{\drop} \subseteq \{1, \ldots, p\}$ 
of cardinality $|S_{\drop}| < s$ and $|I| + |S_{\drop}| \leq 2s$, 
$$\theta_{|I|, |S_{\drop}|} \leq (\Lambda_{\max}(2s) - \Lambda_{\min}(2s))/{2};$$
In particular, if $\delta_{2s} < 1$, we have
$\theta_{|I|, |\dropS|} \leq \delta_{|I| + |\dropS|} \leq \delta_{2s} < 1.$
\end{lemma}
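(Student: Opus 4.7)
\textbf{Proof proposal for Lemma~\ref{lemma:parallel}.} The plan is to reduce the restricted-orthogonality bound to the extremal Rayleigh-quotient bounds over $2s$-sparse vectors via the parallelogram identity, which is the standard device used in \cite{CT05}. Since $I$ and $S_\drop$ are disjoint with $|I|+|S_\drop|\le 2s$, a pair of vectors supported on $I$ and $S_\drop$ respectively may be combined into a single $(|I|+|S_\drop|)$-sparse vector in $\R^p$, and therefore $2s$-sparse, so that both $\Lambda_{\min}(2s)$ and $\Lambda_{\max}(2s)$ apply.

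Concretely, I would fix unit vectors $c\in\R^{|I|}$ and $c'\in\R^{|S_\drop|}$ with $\twonorm{c}=\twonorm{c'}=1$, and view them as $0$-extended vectors in $\R^p$ supported on $I$ and $S_\drop$. Then $c\pm c'$ are $(|I|+|S_\drop|)$-sparse with $\twonorm{c\pm c'}^2=2$, so by definition~\eqref{eq::eigen-admissible-s} and~\eqref{eq::eigen-max},
\begin{equation*}
2\Lambda_{\min}(2s)\;\le\;\twonorm{X(c+c')}^2/n,\qquad \twonorm{X(c-c')}^2/n\;\le\;2\Lambda_{\max}(2s).
\end{equation*}
Applying the polarization identity
\begin{equation*}
4\,\ip{X_I c,\,X_{S_\drop} c'}/n \;=\; \twonorm{X(c+c')}^2/n \;-\; \twonorm{X(c-c')}^2/n,
\end{equation*}
I immediately get $\ip{X_I c, X_{S_\drop} c'}/n \le (\Lambda_{\max}(2s)-\Lambda_{\min}(2s))/2$. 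Replacing $c'$ by $-c'$ yields the same bound on $-\ip{X_I c,X_{S_\drop}c'}/n$, so the absolute value is controlled. Recalling~\eqref{label:correlation-coefficient}, which characterizes $\theta_{|I|,|S_\drop|}$ as the smallest constant making the inequality hold for all unit $c,c'$, gives the first claim $\theta_{|I|,|S_\drop|}\le (\Lambda_{\max}(2s)-\Lambda_{\min}(2s))/2$.

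For the second claim, under $\delta_{2s}<1$ the RIP definition~\eqref{eq::RIP-define} forces $1-\delta_{2s}\le\Lambda_{\min}(2s)$ and $\Lambda_{\max}(2s)\le 1+\delta_{2s}$, so the right-hand side of the first bound is at most $\delta_{2s}$. In fact the same polarization argument, applied with the sharper $(|I|+|S_\drop|)$-sparse RIP constants (using that $c\pm c'$ are $(|I|+|S_\drop|)$-sparse), yields the refined bound $\theta_{|I|,|S_\drop|}\le\delta_{|I|+|S_\drop|}$, and then monotonicity of $\delta_s$ in $s$ delivers $\delta_{|I|+|S_\drop|}\le\delta_{2s}<1$.

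There is no real obstacle here; the only point requiring a hair of care is confirming that $c\pm c'$ are genuinely $(|I|+|S_\drop|)$-sparse, which uses disjointness of $I$ and $S_\drop$, together with the hypothesis $|I|+|S_\drop|\le 2s$ to ensure $\Lambda_{\min}(2s)$ and $\Lambda_{\max}(2s)$ are the right sparsity levels to invoke. Everything else is a direct appeal to the definitions in Section~\ref{sec:cond-intro}.
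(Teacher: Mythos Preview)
Your proposal is correct and is exactly the parallelogram/polarization argument the paper uses (the paper's proof, though suppressed in the final text, applies the two-sided bounds $2\Lambda_{\min}(2s)\le \twonorm{X_I c \pm X_{S_\drop}c'}^2/n \le 2\Lambda_{\max}(2s)$ to unit $c,c'$ and reads off the inner-product bound from the identity $4\ip{X_I c,X_{S_\drop}c'}=\twonorm{X_I c + X_{S_\drop}c'}^2-\twonorm{X_I c - X_{S_\drop}c'}^2$). One cosmetic slip: the two inequalities you chose to display give the \emph{lower} bound on $\ip{X_I c,X_{S_\drop}c'}/n$, not the upper bound you claim next; you need $\twonorm{X(c+c')}^2/n\le 2\Lambda_{\max}(2s)$ together with $\twonorm{X(c-c')}^2/n\ge 2\Lambda_{\min}(2s)$ for that direction (both hold, of course, since $c\pm c'$ are $2s$-sparse with squared norm $2$).
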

\begin{proofof}{\textnormal{Theorem~\ref{thm:RE-oracle-main}}}
It holds by definition of $S_{\drop}$ that $I \cap S_{\drop} = \emptyset$.
It is clear by Lemma~\ref{lemma:threshold-RE} that
for $C_4 \geq D_1$, 
$|I| \leq 2s_0$ and $|I \cup S_{\drop}| \leq |I \cup S| \leq s + s_0 \leq 2s$,
given that $|\dropS| < s$.
We have by Lemma~\ref{prop:MSE-missing}
\bens
\lefteqn{
\twonorm{\hat{\beta}_I - \beta}^2 \leq 
\twonorm{\beta_{\drop}}^2
\left(1 + \frac{2 \theta^2_{|I|, |\dropS|}}{\Lambda_{\min}^2(|I|)}\right) +
\frac{2 |I|}{\Lambda_{\min}^2(|I|)}\basepen^2 } \\
& \leq &
D_3^2 \lambda^2 \sigma^2 s_0 \leq 
2 D_3^2 \log p \left(\sigma^2/n + \sum_{i=1}^p \min(\beta_i^2, \sigma^2/n)
\right) \text{ where }
\eens
$D_3^2 = ((D_0 + C_4)^2 + 1) 
\left(1 + {2 \theta_{|I|, |\dropS|}^2}/{\Lambda_{\min}^2(|I|)} \right) + 
{4(1+a)}/{\Lambda_{\min}^2(|I|)}$.
\end{proofof}
It is clear by  Lemma~\ref{lemma:parallel} that 
\ben
\label{eq::D3-constant}
& &  \; \; \; D_3^2  \leq ((D_0 + C_4)^2 + 1) 
\left(1 + \frac{(\Lambda_{\max}(2s) - \Lambda_{\min}(2s))^2}{2\Lambda_{\min}^2(|I|)} \right) + \frac{4(1+a)}{\Lambda_{\min}^2(|I|)}.
\een
\section{Controlling Type-II errors}
\label{SEC:GENERAL-RE}
In this section, we derive results that are parametrized based on the 
performance of an initial estimator, the smallest magnitude of variables in 
$\{j: |\beta_j| > \lambda \sigma\},$ where $\lambda  := \sqrt{2 \log p/n}$, 
and the choice of the thresholding parameter $t_0$. 
We emphasize that we do not necessarily require that 
$t_0 > \lambda \sigma$.
We first introduce some more notation.
Again order the $\beta_j$'s in decreasing order of magnitude: 
$|\beta_1| \geq |\beta_2| ... \geq |\beta_p|$.
Let  $T_0 = \{1, \ldots, s_0\}$. In view of~\eqref{eq::beta-2-small},
we decompose $T_0 = \{1, \ldots, s_0\}$ into two sets: 
$A_0$ and $T_0 \setminus A_0$, 
where  $A_0$ contains the set of coefficients of $\beta$ strictly larger than
$\lambda \sigma$, for which we define a constant:
\ben
\label{eq::define-a0}
& & \;\;\; A_0 = \{j: |\beta_j| > \lambda \sigma \} =: \{1, \ldots, a_0\};\ \; 
\text{ Let } \; \beta_{\min,A_0}:= \min_{j \leq a_0} |\beta_{j}| > \lambda \sigma.
\een
Our goal is to show when $\beta_{\min,A_0}$ is sufficiently large,
we have $A_0 \subset I$ while achieving the sparse oracle inequalities;
This is shown in Theorem~\ref{thm:threshold-general} under the RE condition,
which is stated as a corollary of  Lemma~\ref{lemma:threshold-general-II}.
First note that changing the coefficients of $\beta_{A_0}$ will not change 
the values of $s_0$ or $a_0$, so long as their absolute values stay strictly 
larger than $\lambda \sigma$. 
Thus one can increase $t_0$ as $\beta_{\min,A_0}$ increases in order to 
reduce false positives while not increasing false 
negatives from the set $A_0$.
In Lemma~\ref{lemma:threshold-general-II},
we impose a lower bound on $\beta_{\min, A_0}$~\eqref{eq::betaA-min-cond}  in order to recover the 
subset of variables in $A_0$, while achieving the  nearly ideal $\ell_2$
loss with a sparse model $I$.

We now show In Lemma~\ref{lemma:threshold-general} that 
under no restriction on $\beta_{\min}$,  we achieve an oracle bound on the 
$\ell_2$ loss, which depends only on 
the $\ell_2$ loss of the initial estimator on the set $T_0$.
Bounds in Lemma~\ref{lemma:threshold-DS}
and~\ref{lemma:threshold-RE}
are special cases~\eqref{eq::off-beta-norm-bound-2}  as we state now.
\begin{lemma}
\label{lemma:threshold-general}
Let $\beta_{\init}$ be an initial estimator. 
Let  $h = \beta_{\init} - \beta_{T_0}$ and $\lambda := \sqrt{2 \log p/n}.$ 
Suppose that we choose a thresholding parameter $t_0$ and set
$$I = \{j: \size{\beta{j, \init}} \geq t_0\}.$$
Then for $\drop := \{1, \ldots, p\} \setminus I$, we have for 
$\drop_{11} := \drop \cap A_0$ and $a_0 = \size{A_0}$,
\begin{eqnarray}
\label{eq::off-beta-norm-bound-2}
\twonorm{\beta_{\drop}}^2 & \leq  & (s_0 - a_0) \lambda^2 \sigma^2 + 
(t_0 \sqrt{a_0} + \twonorm{h_{\drop_{11}}})^2.
\end{eqnarray}
Suppose that $t_0 < \beta_{\min,A_0}$ as defined in~\eqref{eq::define-a0}.
Then~\eqref{eq::off-beta-norm-bound-2} can be replaced by
\begin{eqnarray}
\label{eq::off-beta-norm-bound-alt}
\twonorm{\beta_{\drop}}^2 & \leq  & (s_0 - a_0) \lambda^2 \sigma^2 + 
\twonorm{h_{\drop_{11}}}^2 
\left({\beta_{\min,A_0}}/{(\beta_{\min,A_0} - t_0)}\right)^2.
\end{eqnarray}
\end{lemma}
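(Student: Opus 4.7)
The plan is to decompose $\drop$ into three disjoint pieces, handle the contribution of ``small'' coordinates of $\beta$ using the very definition of $s_0$, and then control the ``large'' coordinates that were (wrongly) dropped via the displacement $h = \beta_{\init} - \beta_{T_0}$ on $T_0$.

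First, write $\drop = \drop_{11} \cup \drop_{12} \cup \drop_2$, where $\drop_{11} := \drop \cap A_0$, $\drop_{12} := \drop \cap (T_0 \setminus A_0)$, and $\drop_2 := \drop \cap T_0^c$. For any index $j > a_0$ we have $|\beta_j| \le \lambda \sigma$ by the ordering~\eqref{eq::beta-order} and~\eqref{eq::beta-2-small}, so $\min(\beta_j^2, \lambda^2\sigma^2) = \beta_j^2$ on $A_0^c$. Combined with~\eqref{eq::define-s0} and the fact that $\min(\beta_j^2, \lambda^2 \sigma^2) = \lambda^2\sigma^2$ for $j \le a_0$, this gives
\begin{equation*}
\sum_{j \in A_0^c} \beta_j^2 \;=\; \sum_{j=1}^p \min(\beta_j^2,\lambda^2\sigma^2) - a_0 \lambda^2\sigma^2 \;\le\; (s_0 - a_0)\,\lambda^2 \sigma^2,
\end{equation*}
and hence $\twonorm{\beta_{\drop_{12} \cup \drop_2}}^2 \le (s_0 - a_0)\lambda^2\sigma^2$.

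Next, for $j \in \drop_{11}$: by definition of $I$ we have $|\beta_{j,\init}| < t_0$, and since $A_0 \subseteq T_0$, $h_j = \beta_{j,\init} - \beta_j$. The triangle inequality $|\beta_j| \le |\beta_{j,\init}| + |h_j| < t_0 + |h_j|$ then yields, via Minkowski in $\ell_2$ (on a set of size at most $a_0$),
\begin{equation*}
\twonorm{\beta_{\drop_{11}}} \;\le\; t_0 \sqrt{|\drop_{11}|} + \twonorm{h_{\drop_{11}}} \;\le\; t_0 \sqrt{a_0} + \twonorm{h_{\drop_{11}}}.
\end{equation*}
Squaring and adding to the previous bound proves~\eqref{eq::off-beta-norm-bound-2}.

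For the improved bound~\eqref{eq::off-beta-norm-bound-alt}, assume $t_0 < \beta_{\min,A_0}$. For any $j \in \drop_{11}$ we have $|\beta_j| \ge \beta_{\min,A_0}$, while $|\beta_{j,\init}| < t_0$, so
\begin{equation*}
|h_j| \;\ge\; |\beta_j| - |\beta_{j,\init}| \;>\; |\beta_j|\Bigl(1 - \tfrac{t_0}{|\beta_j|}\Bigr) \;\ge\; |\beta_j|\,\frac{\beta_{\min,A_0} - t_0}{\beta_{\min,A_0}},
\end{equation*}
which rearranges to $|\beta_j| \le |h_j| \cdot \beta_{\min,A_0}/(\beta_{\min,A_0} - t_0)$. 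Summing the squares over $\drop_{11}$ and combining with the $(s_0 - a_0)\lambda^2\sigma^2$ bound gives~\eqref{eq::off-beta-norm-bound-alt}.

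No real obstacle is anticipated; the only subtlety is being careful that $A_0 \subseteq T_0$ so that $h_j$ is indeed $\beta_{j,\init} - \beta_j$ on $\drop_{11}$, and that one invokes~\eqref{eq::beta-2-small} rather than~\eqref{eq::define-s0} directly to control the ``small'' coordinates outside of $A_0$.
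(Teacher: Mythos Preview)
Your proof is correct and follows essentially the same decomposition and triangle-inequality argument as the paper: split $\drop$ according to $A_0$, bound the $A_0^c$ part by $(s_0-a_0)\lambda^2\sigma^2$ via the definition of $s_0$, and control $\beta_{\drop_{11}}$ through $h_{\drop_{11}}$. The only cosmetic difference is in~\eqref{eq::off-beta-norm-bound-alt}: the paper first bounds the cardinality $|\drop_{11}| \le \twonorm{h_{\drop_{11}}}^2/(\beta_{\min,A_0}-t_0)^2$ and substitutes into $t_0\sqrt{|\drop_{11}|}+\twonorm{h_{\drop_{11}}}$, whereas you give a direct coordinate-wise bound $|\beta_j| \le |h_j|\,\beta_{\min,A_0}/(\beta_{\min,A_0}-t_0)$; both routes yield the identical final inequality.
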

\begin{lemma}{\textnormal{\bf(Oracle Ideal MSE with $\ell_{\infty}$ bounds)}}
\label{lemma:threshold-general-II}
Suppose that~\eqref{eq::eigen-admissible-s} and~\eqref{eq::eigen-max} hold.
Let $\beta_{\init}$ be an initial estimator. Let $h =   \beta_{\init} - \beta_{T_0}$
and $\lambda := \sqrt{2 \log p/n}$. 
Suppose on some event $Q_c$, for $\beta_{\min,A_0}$ as 
defined in~\eqref{eq::define-a0}, it holds that
\ben
\label{eq::betaA-min-cond}
\beta_{\min, A_0}  \geq \norm{h_{A_0}}_{\infty} + 
\min\left\{(s_0)^{1/2} \twonorm{h_{T_0^c}}, \;
(s_0)^{-1} \norm{h_{T_0^c}}_1 \right\}.
\een
Now we choose a thresholding parameter $t_0$ such that  on $Q_c$,
for some $\breve{s}_0 \geq s_0$,
\ben
\label{eq::ideal-t0}
& & \beta_{\min, A_0}  -  \norm{h_{A_0}}_{\infty}
\geq t_0  \geq
\min\left\{(\breve{s}_0)^{-1/2} \twonorm{\beta_{\init, T_0^c}},
(\breve{s}_0)^{-1} \norm{\beta_{\init, T_0^c}}_1\right\}
\een
holds and set  $I = \{j: \size{\beta_{j, \init}} \geq t_0\}$; 
Then we have on  $\T_a \cap Q_c$,
\ben
& & 
\label{eq::lemma-last1}
A_0 \subset I \; \text{ and } \; |I \cap T_0^c| \leq \breve{s}_0; \; 
\text{ and hence} \;
|I| \leq s_0 + \breve{s}_0; \\
\label{eq::lemma-last2}
 \text{ and } & & 
\twonorm{\beta_{\drop}}^2 \leq (s_0 - a_0) \lambda^2 \sigma^2.
\een
For $\hat\beta_{I}$ being the OLS estimator based on $(X_I, Y)$
and  $\breve{s}_0 \leq s$, we have on  $\T_a \cap Q_c$, 
\ben
\label{eq::threshold-general-II}
\twonorm{\hat{\beta}_I - \beta}^2 & \leq &  
{C_7 \breve{s}_0 \lambda^2 \sigma^2}/{\Lambda_{\min}^2(|I|)} 
\een
where $C_7$ depends on $\theta_{|I|, |\dropS|}$ which is upper bounded by 
$(\Lambda_{\max}(2s) - \Lambda_{\min}(2s))/{2}.$
\end{lemma}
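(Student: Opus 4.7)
My plan is to verify the support and cardinality claims about $I$ first, then bound $\twonorm{\beta_\drop}^2$ using the definition of $s_0$, and finally invoke Lemma~\ref{prop:MSE-missing} together with Lemma~\ref{lemma:parallel} to close out the $\ell_2$-loss bound. To show $A_0 \subseteq I$, note that for $j \in A_0 \subseteq T_0$ we have $\beta_{j,\init} = \beta_j + h_j$, so the reverse triangle inequality and the upper half of~\eqref{eq::ideal-t0} give $|\beta_{j,\init}| \geq |\beta_j| - |h_j| \geq \beta_{\min,A_0} - \norm{h_{A_0}}_\infty \geq t_0$, placing $j$ in $I$. To bound $|I \cap T_0^c|$, observe that on $T_0^c$ we have $\beta_{j,\init} = h_j$; letting $k := |I \cap T_0^c|$ and using the lower half of~\eqref{eq::ideal-t0}, either the Markov-type inequality $k\, t_0^2 \leq \twonorm{\beta_{\init,T_0^c}}^2 \leq \breve{s}_0\, t_0^2$ or the $\ell_1$-counting inequality $k\, t_0 \leq \norm{\beta_{\init,T_0^c}}_1 \leq \breve{s}_0\, t_0$ applies, depending on which branch of the $\min$ delivers the bound. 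Either way $k \leq \breve{s}_0$, and adding $|I \cap T_0| \leq s_0$ yields $|I| \leq s_0 + \breve{s}_0$, establishing~\eqref{eq::lemma-last1}.

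For~\eqref{eq::lemma-last2}, $A_0 \subseteq I$ forces $\drop \subseteq A_0^c$; by the definition~\eqref{eq::define-a0} of $A_0$, every $j \notin A_0$ satisfies $|\beta_j| \leq \lambda\sigma$, so $\min(\beta_j^2, \lambda^2\sigma^2) = \beta_j^2$ there. The defining inequality of $s_0$ minus the exact contribution $a_0 \lambda^2\sigma^2$ coming from $A_0$ gives $\sum_{j \notin A_0} \beta_j^2 \leq (s_0-a_0)\lambda^2\sigma^2$, and restricting to $j \in \drop$ yields the claimed bound on $\twonorm{\beta_\drop}^2$. For~\eqref{eq::threshold-general-II}, apply Lemma~\ref{prop:MSE-missing} to $I$ and $S_\drop := S \cap \drop$. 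The disjointness $I \cap S_\drop = \emptyset$ is immediate; for the size prerequisite, use $A_0 \subseteq I \cap S$ to constrain $S_\drop \subseteq S \setminus A_0$ (so $|S_\drop| \leq s - a_0$), combine with $|I| \leq s_0 + \breve{s}_0$ and the hypothesis $\breve{s}_0 \leq s$ to keep $|I|+|S_\drop|$ within $2s$, and then bound $\theta_{|I|,|\dropS|}$ via Lemma~\ref{lemma:parallel}. Substituting $\twonorm{\beta_\drop}^2 \leq (s_0-a_0)\lambda^2\sigma^2 \leq \breve{s}_0 \lambda^2\sigma^2$, $\basepen^2 \leq (1+a)\lambda^2\sigma^2$, and $|I| \leq 2\breve{s}_0$ (from $s_0 \leq \breve{s}_0$) into the bound of Lemma~\ref{prop:MSE-missing}, together with $(a+b)^2 \leq 2a^2 + 2b^2$, produces~\eqref{eq::threshold-general-II} with $C_7$ of the claimed form.

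The main delicate point is verifying the cardinality constraint $|I|+|S_\drop| \leq 2s$ needed to invoke Lemma~\ref{prop:MSE-missing} and Lemma~\ref{lemma:parallel}: the naive sum $s_0+\breve{s}_0+(s-a_0)$ does not automatically stay within $2s$, so one must genuinely exploit $A_0 \subseteq I$ (which shrinks $S_\drop$) together with the hypothesis $\breve{s}_0 \leq s$. A secondary subtlety is that the two alternatives in~\eqref{eq::ideal-t0} play asymmetric roles: the upper bound on $t_0$ secures $A_0 \subseteq I$, whereas the lower bound (via whichever branch of the $\min$ is active) is what enables the Markov or $\ell_1$-counting argument for $|I \cap T_0^c| \leq \breve{s}_0$; the proof should handle both branches. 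Once these structural facts are in place, the final arithmetic assembling $C_7$ is routine.
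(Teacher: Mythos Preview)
Your overall strategy matches the paper's proof: show $A_0\subseteq I$ from the upper bound on $t_0$, bound $|I\cap T_0^c|\leq\breve{s}_0$ by the Markov/$\ell_1$-counting argument from the lower bound on $t_0$, deduce $\twonorm{\beta_\drop}^2\leq(s_0-a_0)\lambda^2\sigma^2$ from $\drop\cap A_0=\emptyset$, and then plug into Lemma~\ref{prop:MSE-missing} with the bound on $\theta_{|I|,|\dropS|}$ from Lemma~\ref{lemma:parallel}. All of that is correct and essentially identical to the paper.

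There is, however, a genuine gap in your verification of the size prerequisite $|I|+|S_\drop|\leq 2s$. You propose to use $A_0\subseteq I$ to get $|S_\drop|\leq s-a_0$, then add this to $|I|\leq s_0+\breve{s}_0$. But that sum is precisely the ``naive sum'' $s_0+\breve{s}_0+(s-a_0)$ that you yourself flag as not automatically bounded by $2s$: with $\breve{s}_0\leq s$ it only gives $2s+(s_0-a_0)$, which overshoots whenever $a_0<s_0$. So exploiting $A_0\subseteq I$ alone is not enough; you have identified the difficulty but not resolved it.

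The fix the paper uses is sharper and avoids separate bounds on $|I|$ and $|S_\drop|$. Since $I$ and $S_\drop$ are disjoint with $S_\drop=S\setminus I$, one has $|I|+|S_\drop|=|I\cup S|=|S|+|I\setminus S|$. Now $T_0\subseteq S$ forces $I\setminus S\subseteq I\cap T_0^c$, so $|I\setminus S|\leq|I\cap T_0^c|\leq\breve{s}_0$, giving $|I|+|S_\drop|\leq s+\breve{s}_0\leq 2s$. The key is to use the full inclusion $I\cap T_0\subseteq I\cap S$ (not just $A_0\subseteq I\cap S$), so that the overlap you subtract is $|I\cap T_0|$ rather than merely $a_0$. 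Once you make this correction, the rest of your argument goes through.
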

By introducing $\breve{s}_0$, the dependency of $t_0$ on the knowledge of 
$s_0$ is relaxed; in particular, it can be used to express a desirable level of 
sparsity for the model $I$ that one wishes to select. 
We note that implicit in the statement of 
Lemma~\eqref{lemma:threshold-general-II}, we assume the knowledge of the 
bounds on various norms of $\beta_{\init} - \beta$ 
(hence the name of ``oracle'').
Theorem~\ref{thm:threshold-general} is an immediate corollary of 
Lemma~\ref{lemma:threshold-general-II}, with the difference
being: we now let $\breve{s}_0 = s_0$ everywhere and assume
having an upper estimate $\breve{D}_1$ of $D_1$, so as not to depend on 
an ``oracle'' telling us an exact value.
\begin{theorem}
\label{thm:threshold-general}
Suppose that $RE(s_0, 6, X)$ condition holds.
Choose $\lambda_n \geq b \basepen$, where $b \geq 2$.
Let $\beta_{\init}$ be the Lasso estimator as in~\eqref{eq::origin}.
Suppose that for some constants $\breve{D}_1 \geq D_1$, and for
$D_0, D_1$ as in~\eqref{eq::D0-define} and~\eqref{eq::D1-define},
it holds that
\bens
\label{eq::betaA-min-RE}
\beta_{\min, A_0}  \geq  D_0 \lambda \sigma \sqrt{s_0} 
+ \breve{D}_1 \lambda \sigma, \text{ where }
\; \lambda := \sqrt{2 \log p/n},
\eens
Choose a thresholding parameter $t_0$ and set 
\bens
I = \{j: \size{\beta_{j, \init}} \geq t_0\}, \; \text{ where }
t_0 \geq \breve{D}_1 \lambda \sigma.
\eens
Then on $\T_a$,~\eqref{eq::lemma-last1},
~\eqref{eq::lemma-last2}, and~\eqref{eq::threshold-general-II}
all hold with $\breve{s}_0 = s_0$ everywhere and
$C_7 \leq {\Lambda_{\min}^2(|I|)} + {(\Lambda_{\max}(2s) - \Lambda_{\min}(2s))^2}/{2} + 4(1+ a)$;
Moreover, the OLS estimator $\hat{\beta}$ based on $I$ achieves on $\T_a$,
for $f(I)$ as defined in Corollary~\ref{cor:uup-pred},
where  $\size{I} \leq 2s_0$,
\bens
\twonorm{X \hat{\beta}_I - X \beta}/\sqrt{n} \leq C_8 \sqrt{s_0} \lambda \sigma
\text{  where } \; C_8 = \sqrt{\Lambda_{\max}(s)}  + f(I).
\eens
\end{theorem}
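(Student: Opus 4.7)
My plan is to present Theorem~\ref{thm:threshold-general} essentially as a direct corollary of Lemma~\ref{lemma:threshold-general-II}, with the Lasso-specific bounds from Theorem~\ref{thm:RE-oracle} providing the numerical inputs. The structure is: (i) use Theorem~\ref{thm:RE-oracle} to control $\norm{h_{A_0}}_\infty$, $\twonorm{h_{T_0^c}}$ and $\norm{h_{T_0^c}}_1$ for $h = \beta_{\init} - \beta_{T_0}$; (ii) check hypothesis~\eqref{eq::betaA-min-cond} and the threshold condition~\eqref{eq::ideal-t0} of the oracle lemma, with $\breve{s}_0 = s_0$; (iii) invoke Lemma~\ref{lemma:threshold-general-II} to obtain~\eqref{eq::lemma-last1},~\eqref{eq::lemma-last2},~\eqref{eq::threshold-general-II} and the sharper constant $C_7$; (iv) deduce the prediction bound from Theorem~\ref{THM:PREDITIVE-ERROR}.

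For step (i)--(ii), on the event $\T_a$, Theorem~\ref{thm:RE-oracle} gives $\twonorm{h_{T_{01}}} \leq D_0 \lambda \sigma \sqrt{s_0}$ and $\norm{\beta_{\init, T_0^c}}_1 \leq D_1 \lambda \sigma s_0 \leq \breve{D}_1 \lambda \sigma s_0$ (noting $\norm{h_{T_0^c}}_1 = \norm{\beta_{\init,T_0^c}}_1$ since $\beta_{T_0^c} = 0$ inside $h$'s relevant coordinates outside $T_0$). Since $A_0 \subseteq T_0 \subseteq T_{01}$, this yields $\norm{h_{A_0}}_\infty \leq \twonorm{h_{T_0}} \leq D_0 \lambda \sigma \sqrt{s_0}$. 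Hence the ``min'' term in~\eqref{eq::betaA-min-cond} is at most $s_0^{-1} \norm{h_{T_0^c}}_1 \leq D_1 \lambda \sigma \leq \breve{D}_1 \lambda \sigma$, and the hypothesis on $\beta_{\min,A_0}$ in Theorem~\ref{thm:threshold-general} exactly supplies $\norm{h_{A_0}}_\infty + \breve{D}_1 \lambda \sigma$. For~\eqref{eq::ideal-t0}, the chosen threshold $t_0 \geq \breve{D}_1 \lambda \sigma$ dominates $s_0^{-1}\norm{\beta_{\init,T_0^c}}_1$, and $\beta_{\min,A_0} - \norm{h_{A_0}}_\infty \geq \breve{D}_1 \lambda \sigma$, so the lower endpoint also holds (implicitly I must read ``$t_0$ is not chosen absurdly large,'' i.e., $t_0 \leq \beta_{\min,A_0} - D_0 \lambda \sigma \sqrt{s_0}$; this is the mild tacit assumption I will point out).

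For step (iii) and the constant $C_7$: Lemma~\ref{lemma:threshold-general-II} reduces $\twonorm{\hat\beta_I - \beta}^2$ to the bound coming from Lemma~\ref{prop:MSE-missing}, and substituting~\eqref{eq::lemma-last2} together with the Lemma~\ref{lemma:parallel} bound $\theta_{|I|,|\dropS|} \leq (\Lambda_{\max}(2s) - \Lambda_{\min}(2s))/2$ produces the explicit upper bound for $C_7$ stated in the theorem; the extra $4(1+a)$ term arises from $\basepen^2 = (1+a)\lambda^2 \sigma^2$ in the variance part of Lemma~\ref{prop:MSE-missing} (after multiplying the $2/\Lambda_{\min}^2$ factor by $\Lambda_{\min}^2(|I|)$ to normalize against the leading $\Lambda_{\min}^2(|I|)$ in the bound).

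For the prediction error, I apply Theorem~\ref{THM:PREDITIVE-ERROR} with the set $I$ just produced: by~\eqref{eq::lemma-last2}, $\twonorm{\beta_\drop} \leq \lambda \sigma \sqrt{s_0}$, and by~\eqref{eq::lemma-last1}, $|I| \leq 2s_0$; therefore
\[
\frac{\twonorm{X\hat\beta_I - X\beta}}{\sqrt{n}} \leq \sqrt{\Lambda_{\max}(s)}\,\lambda \sigma \sqrt{s_0} + \frac{\sqrt{|I|\,\Lambda_{\max}(|I|)}\,\basepen}{\Lambda_{\min}(|I|)} \leq \bigl(\sqrt{\Lambda_{\max}(s)} + f(I)\bigr)\lambda \sigma \sqrt{s_0},
\]
which is precisely the claim with $C_8 = \sqrt{\Lambda_{\max}(s)} + f(I)$. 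The main obstacle in executing this plan is bookkeeping: tracing $D_0, D_1, \breve{D}_1$ through~\eqref{eq::betaA-min-cond} and~\eqref{eq::ideal-t0} in such a way that the ``$\min$'' in~\eqref{eq::betaA-min-cond} is realized by the $\ell_1$ branch (which is what makes $\breve{D}_1 \lambda \sigma$, and not $\sqrt{s_0}\,\twonorm{h_{T_0^c}}$, the operative quantity). No deep new estimate is needed beyond Theorem~\ref{thm:RE-oracle}; the work is purely in aligning the constants.
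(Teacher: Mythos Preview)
Your proposal is correct and follows essentially the same approach as the paper, which explicitly states that Theorem~\ref{thm:threshold-general} is an immediate corollary of Lemma~\ref{lemma:threshold-general-II} with $\breve{s}_0 = s_0$ and the Lasso bounds from Theorem~\ref{thm:RE-oracle} supplying $\twonorm{h_{T_{01}}} \leq D_0 \lambda \sigma \sqrt{s_0}$ and $\norm{h_{T_0^c}}_1 \leq D_1 \lambda \sigma s_0$. Your derivation of the explicit $C_7$ via Lemma~\ref{prop:MSE-missing} and Lemma~\ref{lemma:parallel}, and of the prediction bound via Theorem~\ref{THM:PREDITIVE-ERROR}, is exactly right; your remark that the upper inequality in~\eqref{eq::ideal-t0} tacitly requires $t_0 \leq \beta_{\min,A_0} - \norm{h_{A_0}}_\infty$ is a valid observation about an implicit constraint in the theorem statement.
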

\subsection{Discussions}
Compared to Theorem~\ref{THM:RE}, we now put a lower bound on
$\beta_{\min, A_0}$ rather than on the entire set $S$ in 
Theorem~\ref{thm:threshold-general}, with the hope to recover
$A_0$.
Choosing the set $A_0$ is rather arbitrary; one could for example, 
consider the set of variables that are strictly above 
$\lambda \sigma/2$ for instance.
Bounds on $\norm{h_{A_0}}_{\infty}$ are in general 
harder to obtain than $ \twonorm{h_{A_0}}$;
Under stronger incoherence conditions, such bounds can be obtained; 
see for example~\cite{Lou08,Wai08,CP09}. In general, we can still hope to 
bound $\norm{h_{A_0}}_{\infty}$ by $\twonorm{h_{A_0}}$.
Having a tight bound on  $\twonorm{h_{T_0}}$ (or 
$\norm{h_{T_0}}_{\infty}$) and $\shtwonorm{h_{T_0^c}}$ naturally
helps relaxing the requirement on $\beta_{\min, A_0}$ for 
Lemma~\ref{lemma:threshold-general-II}, while 
in Lemma~\ref{lemma:threshold-general}, such tight upper bounds
will help us  to control both the size of $I$ and $\norm{\beta_{\drop}}$ and 
therefore achieve a tight bound on the $\ell_2$ loss in the expression 
of Lemma~\ref{prop:MSE-missing}. 
In general, when the strong signals are close to each other in their strength,
then a small $\beta_{\min, A_0}$ implies that we are in a situation with low 
signal to noise ratio (low SNR); one needs to carefully tradeoff false positives 
with false negatives; this is shown in our experimental results
in Section~\ref{sec:experiments}. 
We refer to~\cite{Wai09b} and references therein for discussions on 
information theoretic limits 
on sparse recovery where the particular estimator is not specified.

\silent{
Note that in order for $|I| + |S_{\drop}| \leq 2s$ to hold, 
as required by Lemma~\ref{prop:MSE-missing} and~\ref{lemma:parallel},  
we only need to guarantee that
$$ |I \cap \Sc| \leq s, \;  \text{ so that } \; 
|I| + |S_{\drop}| := | I \cup S_{\drop} | \leq |S| + |I \cap \Sc| \leq 2s.$$
}

\silent{It is clear that if there exists $t_0$ such that
$$\beta_{\min, A_0} - \twonorm{\upsilon_{A_0}} \geq t_0 
\geq \twonorm{ \beta^{(12)} + \beta^{(2)}} + \twonorm{\upsilon_{A_0^c}},$$ 
holds, for which the following inequality is sufficient:
$$\beta_{\min, A_0} \geq \twonorm{\upsilon_{A_0}} + 
\sqrt{s_0 - a_0} \lambda \sigma + \twonorm{\upsilon_{A_0^c}}$$
then we have exact recovery of $A_0$.}

\def\sleft{\hskip-5pt}
\def\lleft{\hskip-25pt}
\begin{figure}
\begin{center}
\begin{tabular}{cc}
\begin{tabular}{c}
\includegraphics[width=0.35\textwidth,angle=270]{./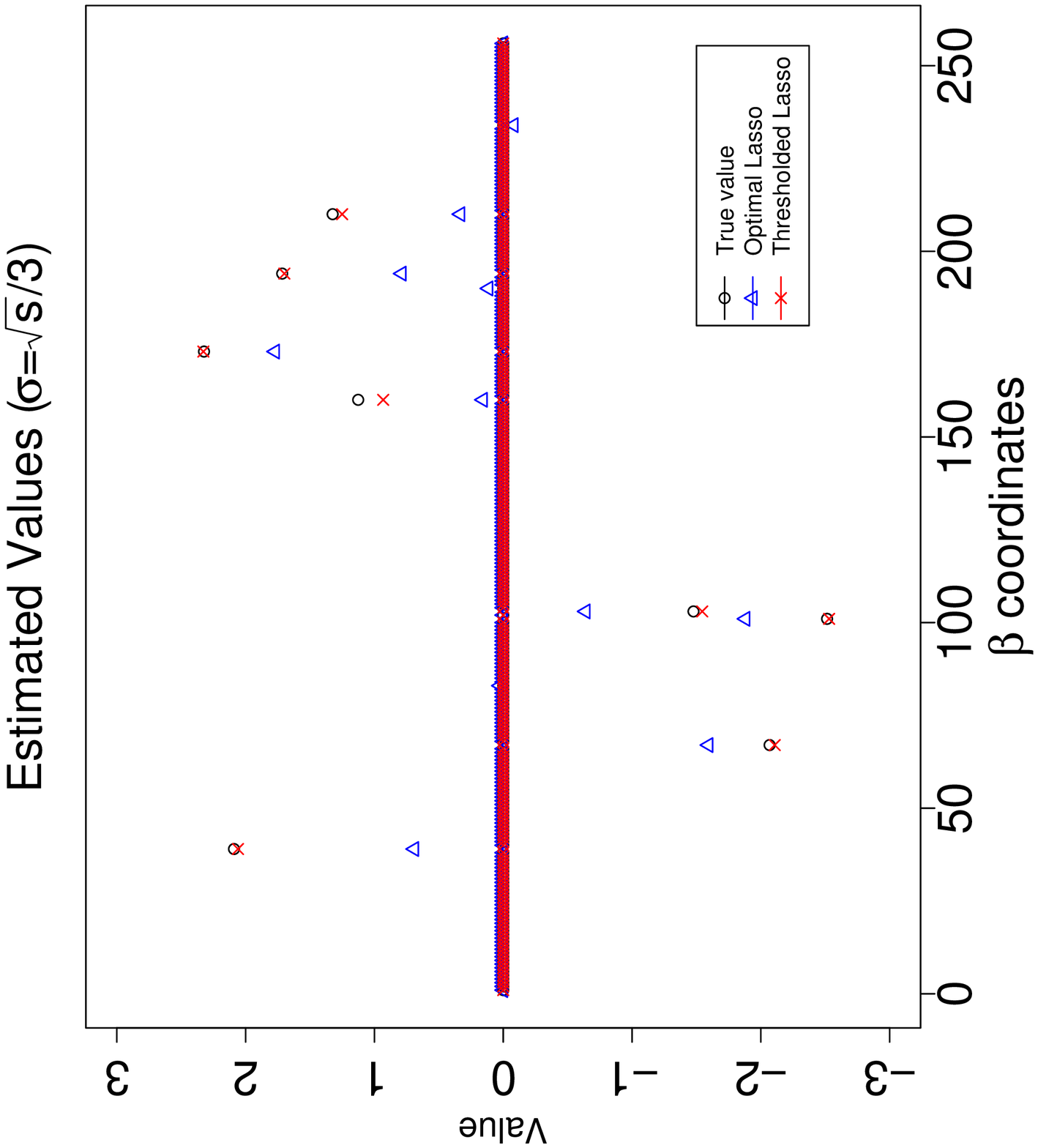}
\end{tabular}& 
\begin{tabular}{c}
\includegraphics[width=0.35\textwidth,angle=270]{./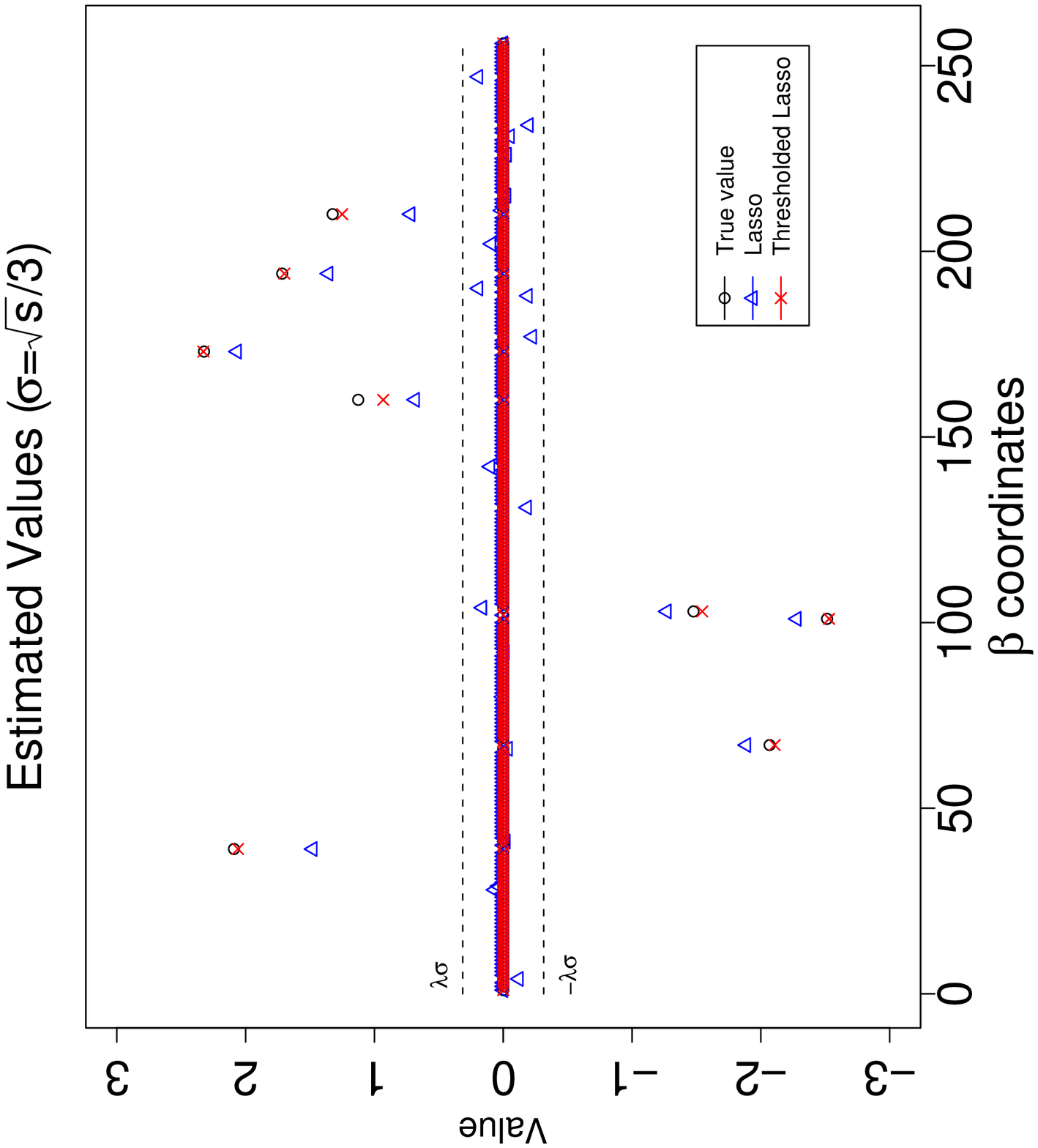} 
\end{tabular} \\
(a) & (b) \\
\end{tabular}
\caption{Illustrative example: i.i.d. Gaussian ensemble; 
$p=256$, $n=72$, $s=8$, and $\sigma = \sqrt{s}/3$.
(a) compare with the Lasso estimator $\tilde{\beta}$ which minimizes 
$\ell_2$ loss. Here $\tilde{\beta}$ has only 3 FPs, but $\rho^2$ is
large with a value of $64.73$. 
(b) Compare with the $\beta_{\init}$ obtained using $\lambda_n$. 
The dotted lines show the thresholding level $t_0$. 
The $\beta_{\init}$ has 15 FPs, all of which were cut after the 1st step; 
resulting  $\rho^2= 12.73$. After refitting with OLS in the 2nd step, 
for the $\hat{\beta}$, $\rho^2$ is further reduced to $0.51$.}
\label{fig:example}
\end{center}
\end{figure}
\section{Numerical experiments}
\label{sec:experiments}
In this section, we present results from numerical simulations designed to 
validate the theoretical analysis presented in previous sections. 
In our Thresholded Lasso implementation
(we plan to release the implementation as an R package), we use a 
{\em Two-step} procedure as described in
Section~\ref{sec:introduction}: we use the Lasso as the initial estimator, 
and OLS in the second step after thresholding.
Specifically, we carry out the Lasso using procedure $\lars(Y, X)$ that implements
the LARS algorithm~\cite{EHJ04} to calculate the full regularization path. 
We then use $\lambda_n$, whose expression is fixed throughout the experiments as follows,
\ben
\label{eq::pen-exp}
\lambda_n = 0.69 \lambda \sigma, \; \text{ where }  \lambda= \sqrt{2 \log p/n},
\text{ in~\eqref{eq::origin} }
\een
to select a $\beta_{\init}$ from this output path as our initial estimator.
We then threshold the $\beta_{\init}$ using a value $t_0$ typically chosen between 
$0.5 \lambda \sigma$ and $\lambda \sigma$. 
See each experiment for the actual value used.
Given that columns of $X$ being normalized to have $\ell_2$
norm $\sqrt{n}$, for each input parameter $\beta$, we compute its 
SNR as follows: 
$$
SNR :=  \twonorm{\beta}^2 / \sigma^2.
$$
To evaluate $\hat{\beta}$, we use metrics defined  in Table~\ref{tab:fpfn};
we also compute the ratio between squared $\ell_2$ error and the 
ideal mean squared error, known as the $\rho^2$;
see Section~\ref{sec:exp-ell2-errors} for details.
\subsection{Illustrative example}
\label{subsec:illus}
In the first example, we run the following experiment with a setup similar to
what was used in~\cite{CT07} to conceptually compare the behavior of 
the Thresholded Lasso with the Gauss-Dantzig selector: 

\begin{enumerate}
\item
Generate an {\it i.i.d. Gaussian ensemble} $X_{n \times p}$, 
where $X_{ij} \sim N(0, 1)$ are independent, which is then 
normalized to have column $\ell_2$-norm $\sqrt{n}$.
\item
Select a support set $S$ of size $|S| = s$ uniformly at random, 
and sample a vector $\beta$ with independent and identically 
distributed entries on $S$ as follows, $\beta_i = \mu_i (1 + |g_i|),$ 
where $\mu_i = \pm 1$ with probability 1/2 and $g_i \sim N(0,1)$.
\item 
Compute $Y = X \beta + \epsilon$, where the noise 
$\epsilon \sim N(0, \sigma^2 I_n)$ is generated with $I_n$ being 
the  $n \times n$ identity matrix. 
Then feed $Y$ and $X$ to the Thresholded Lasso with thresholding 
parameter being $t_0$ to recover $\beta$ using $\hat{\beta}$. 
\end{enumerate}
In Figure~\ref{fig:example}, we set $p=256$, $n=72$, $s=8$, 
$\sigma = \sqrt{s}/3$ and $t_0 = \lambda\sigma$.  
We compare the Thresholded Lasso estimator $\hat{\beta}$
with the Lasso, where the full LARS regularization path is searched to 
find the {\em optimal} $\tilde{\beta}$ that has the minimum $\ell_2$ error.

\subsection{Type I/II errors}
\label{subsec:type12}
We now evaluate the Thresholded Lasso estimator by comparing
Type I/II errors under different values of $t_0$ and SNR.
We consider Gaussian random matrices for the design $X$ with 
both diagonal and  Toeplitz covariance. We refer to the former
as {\it i.i.d. Gaussian ensemble} and the latter as {\it Toeplitz ensemble}.
In the Toeplitz case, the covariance is given by 
$T(\gamma)_{i,j} = \gamma^{|i-j|}$ where $0< \gamma < 1.$
We run under two noise levels: $\sigma = \sqrt{s}/3$ and $\sigma = \sqrt{s}$.
For each $\sigma$, we vary the threshold $t_0$ from 
$0.01 \lambda \sigma$ to $1.5 \lambda \sigma$. 
For each $\sigma$ and $t_0$ combination, 
we run the following experiment: First  we generate $X$ as in Step 1 above. 
After obtaining $X$, we keep it fixed and then repeat Steps $2-3$ for $200$ 
times with a new $\beta$ and $\epsilon$ generated each time and 
we count the number of Type I and II errors in $\hat{\beta}$.
We compute the  average at the end of 200 runs, which will correspond to
one data point on the curves in Figure~\ref{fig:type12} (a) and (b).

For both types of designs, similar behaviors are observed.
For $\sigma=\sqrt{s}/3$, FNs increase slowly; hence there is a wide
range of values from which $t_0$ can be chosen such that 
FNs and FPs are both zero. In contrast, when $\sigma=\sqrt{s}$, 
FNs increase rather quickly as $t_0$ increases due to the low SNR.
It is clear that the low SNR and high correlation combination makes
it the most challenging situation for variable selection, as predicted 
by our theoretical analysis and others. See discussions in 
Section~\ref{SEC:GENERAL-RE}. In (c) and (d), we run additional 
experiments for the low SNR case for Toeplitz ensembles. 
The performance is improved by increasing the
sample size or lowering the correlation factor.
\begin{table}[h]
\begin{center}
\caption{Metrics for evaluating $\hat{\beta}$}
\label{tab:fpfn}
\begin{tabular}{l|l} 
\hline
Metric & Definition \\ \hline
Type I errors or False Positives (FPs) & \# of incorrectly selected non-zeros in $\hat{\beta}$ \\
Type II errors or False Negatives (FNs) & \# of non-zeros in $\beta$ that are not selected in $\hat{\beta}$ \\
True positives (TPs) & \# of correctly selected non-zeros \\ 
True Negatives (TNs) & \# of zeros in $\hat{\beta}$ that are also zero in $\beta$ \\
False Positive Rate (FPR) & $ FPR = FP / (FP + TN) = FP/(p-s) $ \\
True Positive Rate (TPR) & $ TPR = TP/ (TP+FN) = TP/ s $ \\ \hline
\end{tabular}
\end{center}
\end{table}

\subsection{$\ell_2$ loss}
\label{sec:exp-ell2-errors}
We now compare the performance of the Thresholded Lasso with 
the ordinary Lasso by examining the metric $\rho^2$ defined as follows:
$$
\rho^2 = \frac{\sum_{i=1}^p (\hat{\beta}_i - \beta_i)^2}{\sum_{i=1}^p \min(\beta_i^2, \sigma^2/n)}.
$$

\begin{figure}
\begin{center}
\begin{tabular}{cc}
\begin{tabular}{c}
\includegraphics[width=0.33\textwidth,angle=270]{./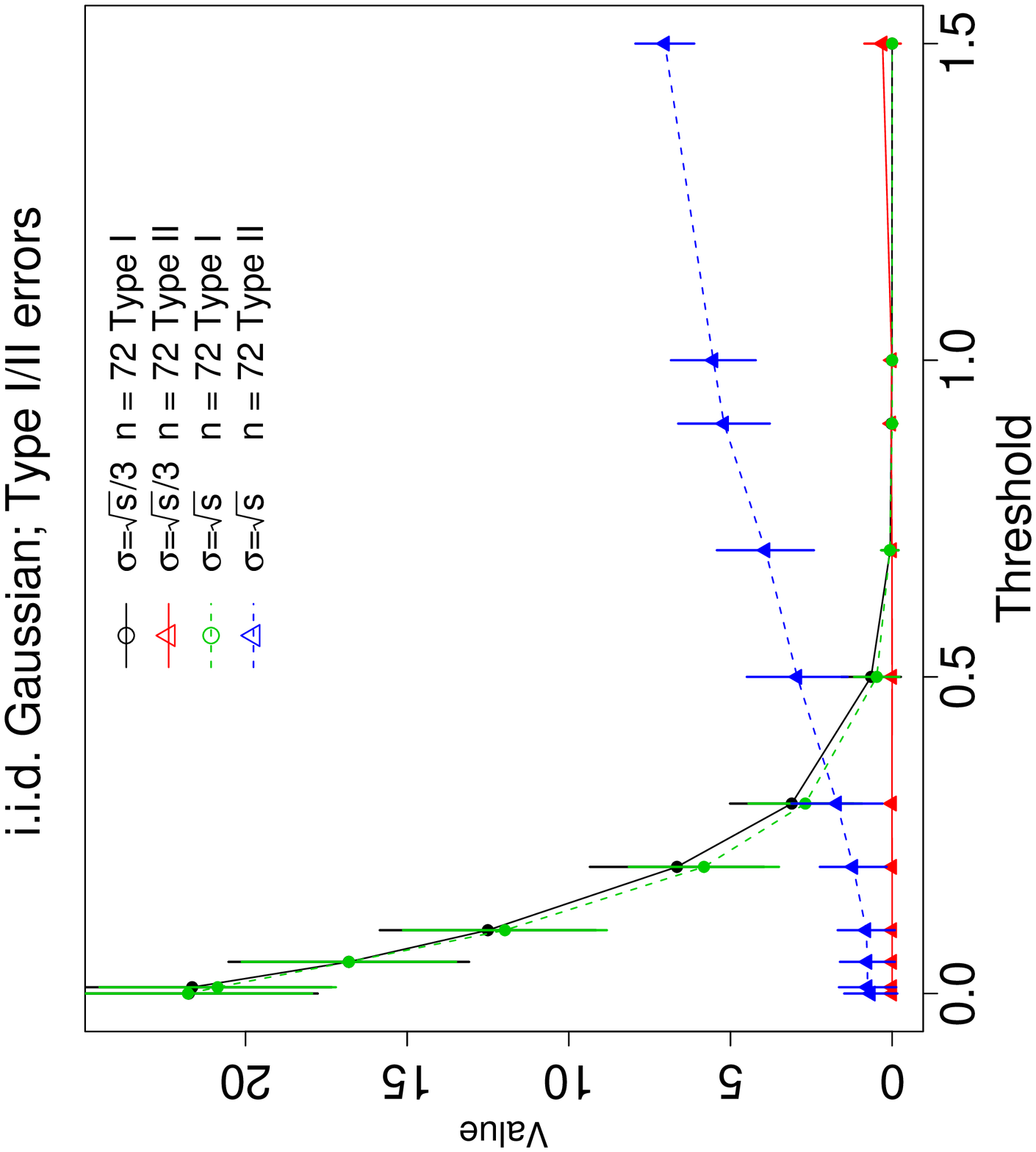}
\end{tabular}& 
\begin{tabular}{c}
\includegraphics[width=0.33\textwidth,angle=270]{./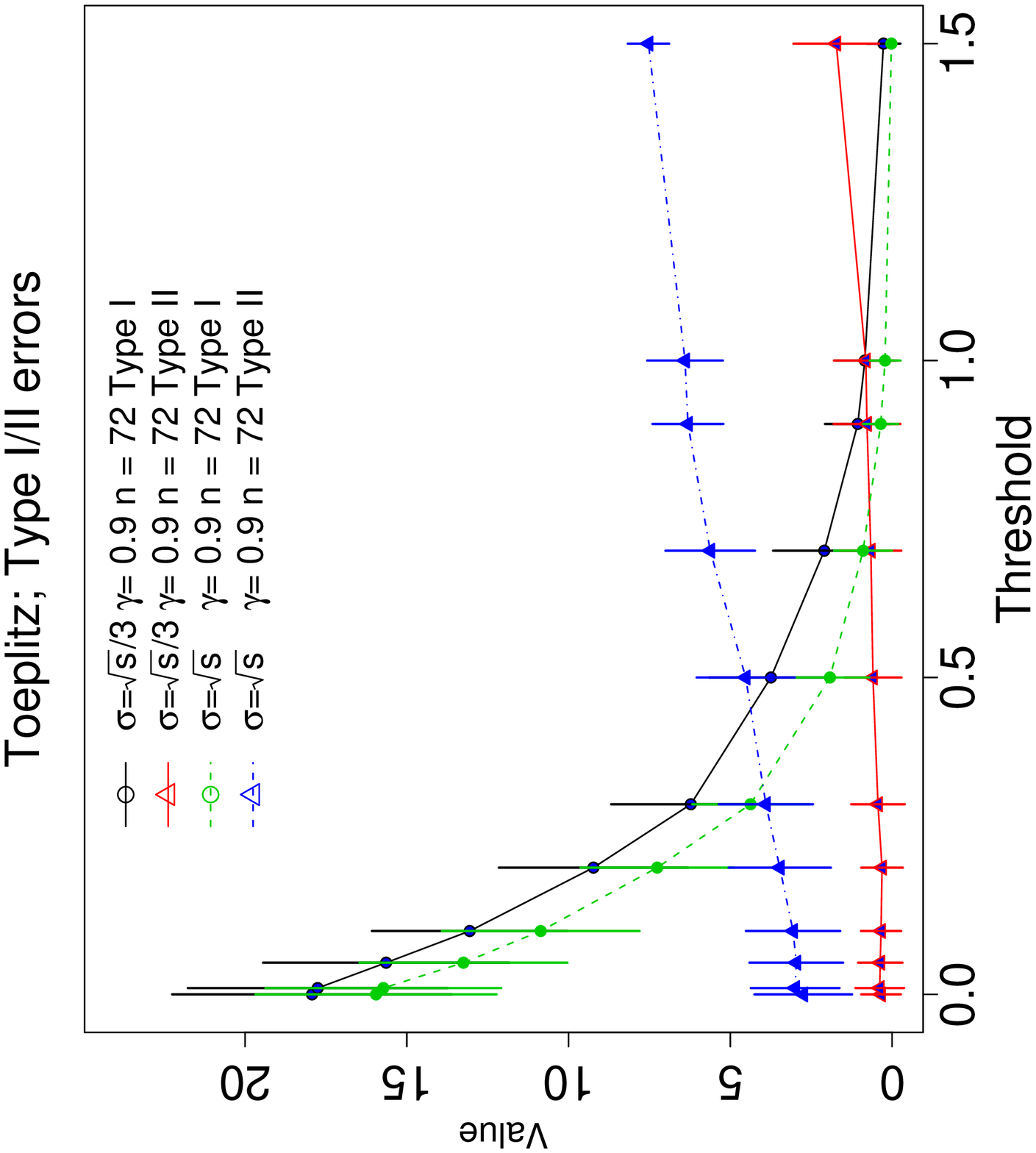} \\
\end{tabular} \\
(a) & (b) \\
\begin{tabular}{c}
\includegraphics[width=0.33\textwidth,angle=270]{./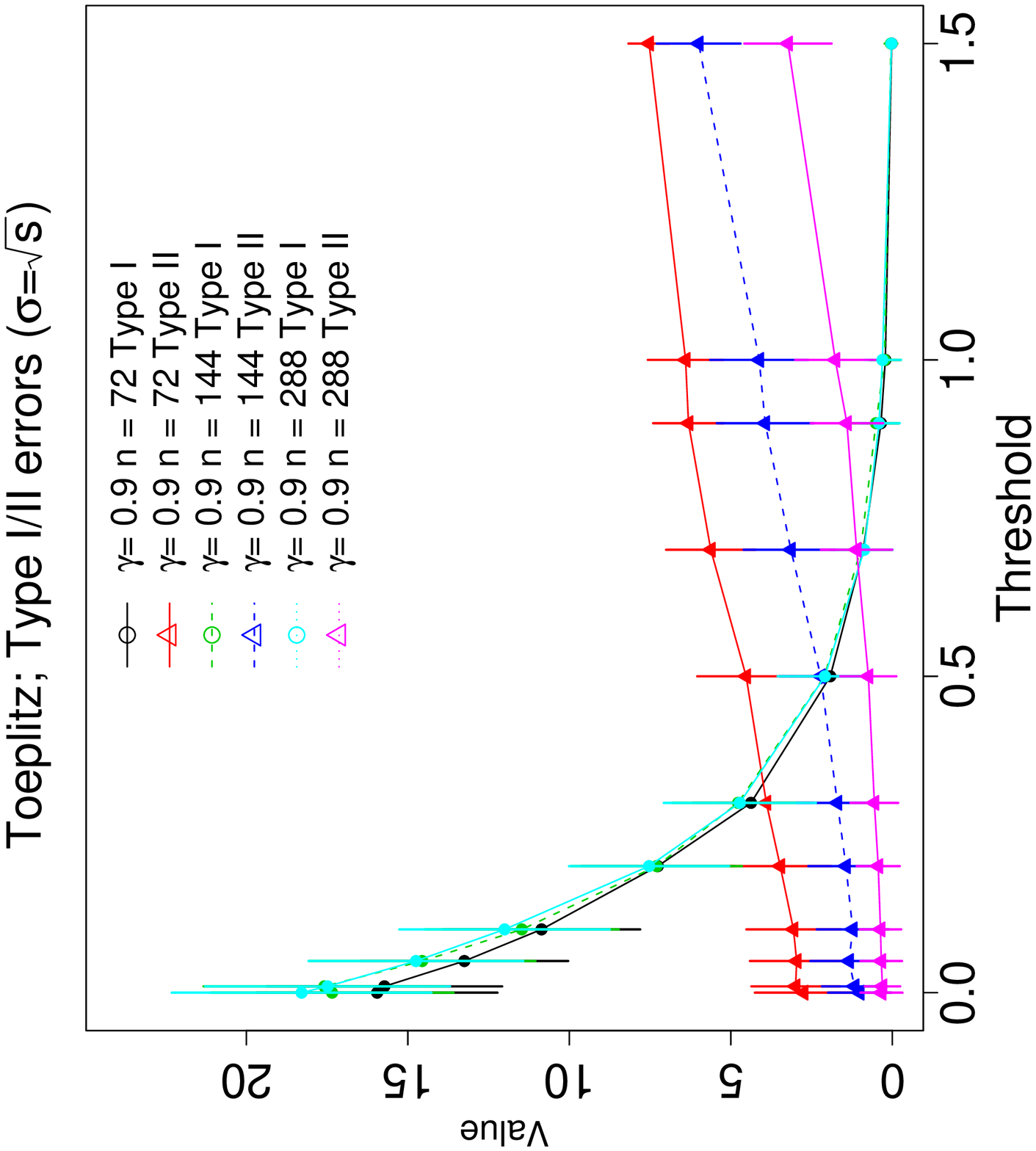}
\end{tabular}& 
\begin{tabular}{c}
\includegraphics[width=0.33\textwidth,angle=270]{./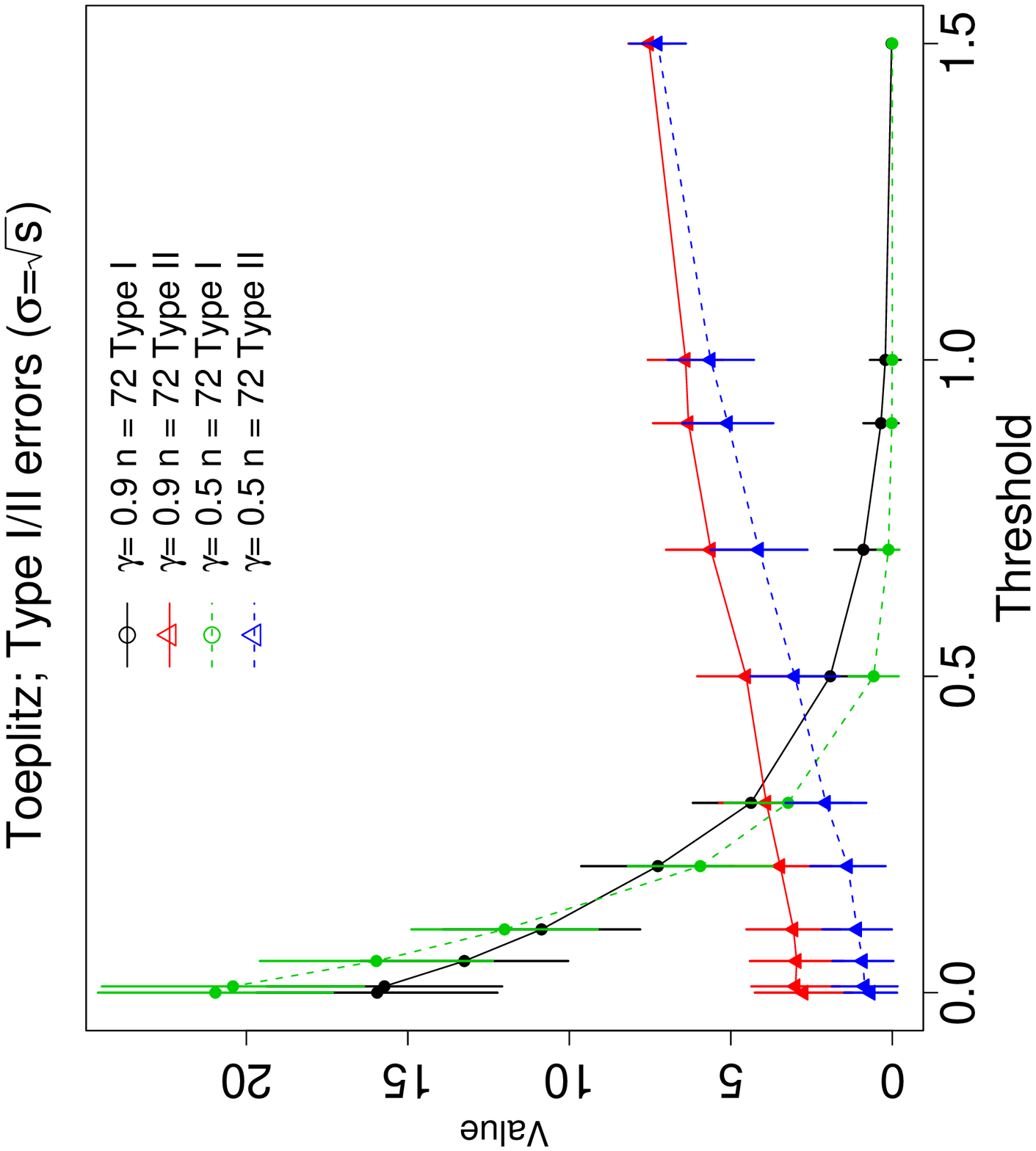} \\
\end{tabular} \\
(c) & (d) \\
\begin{tabular}{c}
\includegraphics[width=0.33\textwidth,angle=270]{./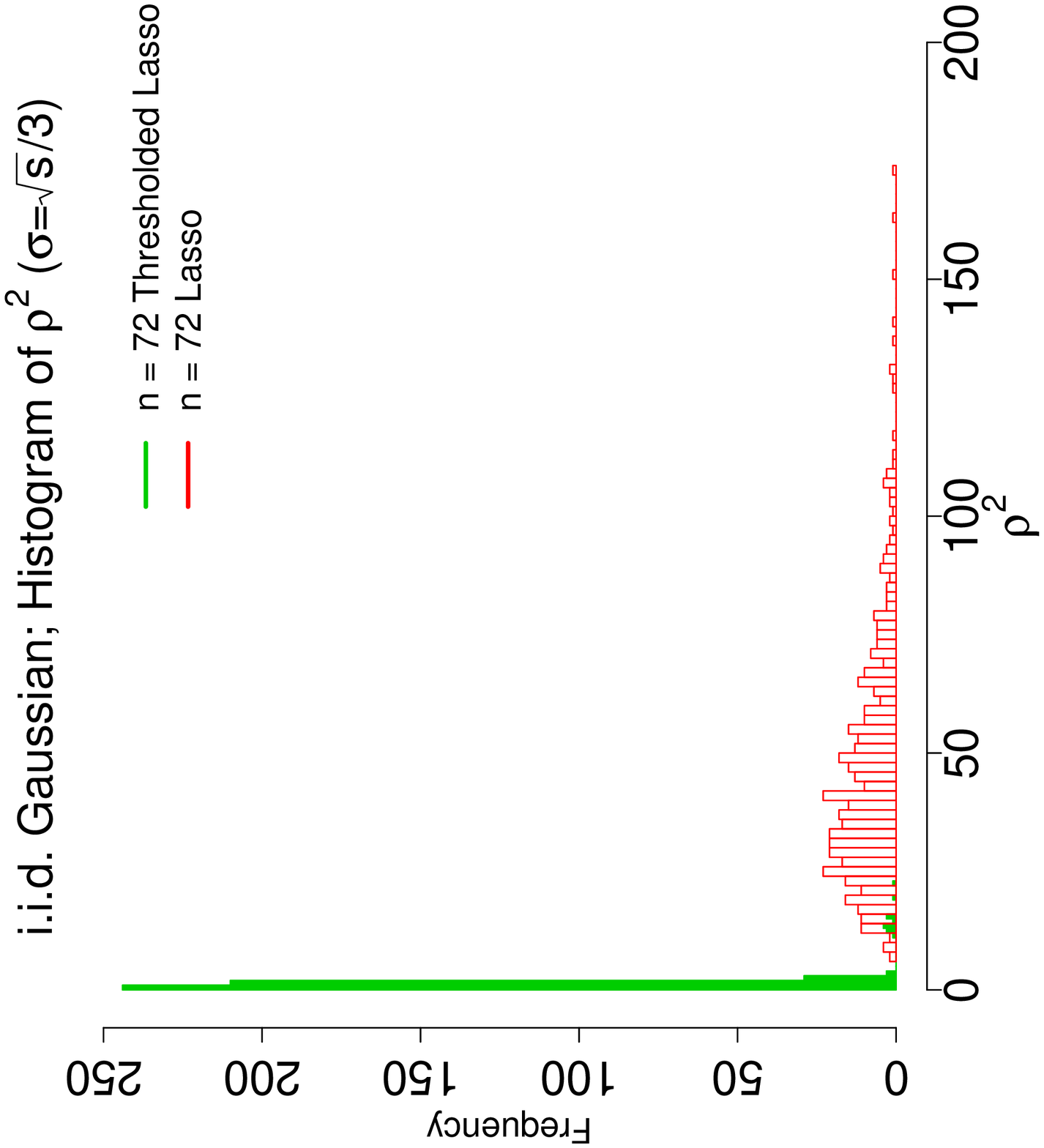}
\end{tabular}& 
\begin{tabular}{c}
\includegraphics[width=0.33\textwidth,angle=270]{./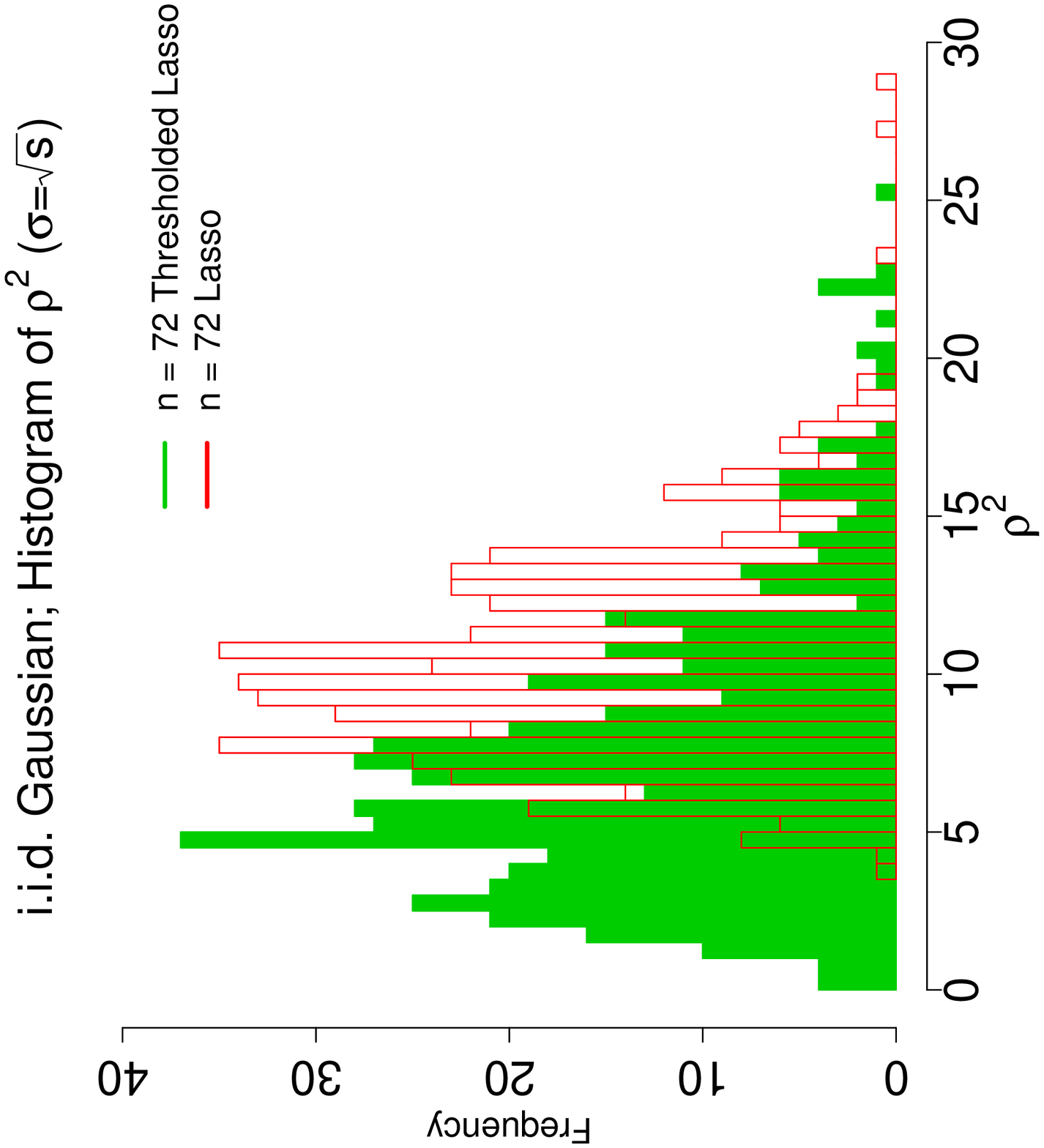} 
\end{tabular} \\
(e) & (f) \\
\end{tabular}
\caption{$p=256$ $s=8$. 
(a) (b) Type I/II errors for i.i.d. Gaussian and Toeplitz ensembles.
Each vertical bar represents $\pm 1$ std. The unit of $x$-axis is
in $\lambda \sigma$. For both types of design matrices,
FPs decrease and FNs increase as the threshold increases. 
For Toeplitz ensembles, in (c) with fixed correlation $\gamma$, 
FNs decrease with more samples, and in (d) with fixed sample size, 
FNs decrease as the correlation $\gamma$ decreases.
(e) (f) Histograms of $\rho^2$ under i.i.d Gaussian ensembles 
from 500 runs.}
\label{fig:type12}
\end{center}
\end{figure}

We first run the above experiment using i.i.d. Gaussian ensemble
under the following thresholds: $t_0 = \lambda \sigma$ for $\sigma=\sqrt{s}/3$,
and $t_0 = 0.36 \lambda \sigma$ for $\sigma=\sqrt{s}$. These are chosen based on the desire 
to have low errors of both types (as shown in Figure~\ref{fig:type12} (a)). 
Naturally, for low SNR cases,  small $t_0$ will reduce Type II errors.
In practice, we suggest using cross-validations to choose the exact 
constants in front of $\lambda \sigma$.
We plot the histograms of $\rho^2$ in Figure~\ref{fig:type12} (e) and (f).
In (e), the mean and median are $1.45$ and $1.01$ for the Thresholded 
Lasso, and  $46.97$ and $41.12$ for the Lasso. 
In (f), the corresponding values are $7.26$ and $6.60$ for 
the Thresholded Lasso and $10.50$ and $10.01$ for the Lasso.
With high SNR, the Thresholded Lasso performs extremely well;
with low SNR, the improvement of the Thresholded Lasso over the
ordinary Lasso is less prominent; this is in close correspondence with the
Gauss-Dantzig selector's behavior as shown by~\cite{CT07}.

Next we run the above experiment under different sparsity values of $s$. 
We again use i.i.d. Gaussian ensemble with 
$p=2000$, $n=400$, and $\sigma =\sqrt{s}/3$. 
The threshold is set at $t_0 = \lambda\sigma$. 
The SNR for different $s$ is fixed at around $32.36$. 
Table~\ref{tab:rho-square-snr} shows the mean of the $\rho^2$ for the 
Lasso and the Thresholded Lasso estimators. 
The Thresholded Lasso performs consistently better than the ordinary Lasso 
until about $s=80$, after which both break down.
For the Lasso, we always choose from the full regularization path
the {\em optimal} $\tilde{\beta}$ that has the minimum $\ell_2$ loss.

\begin{table}[h]
\begin{center}
\caption{$\rho^2$ under different sparsity and fixed SNR. 
Average over 100 runs for each $s$.
}
\label{tab:rho-square-snr}
\begin{tabular}{cccccccc} 
\hline
s      & 5    & 18   & 20   & 40   & 60   & 80   & 100  \\ \hline 
SNR & 34.66 &  32.99 &  32.29&   32.08 &  32.28 &  32.56 &  32.54  \\ \hline 
Lasso  & 17.42 &  22.01 &  44.89 &  52.68 &  31.88 &  29.40 &  47.63 \\ \hline 
Thresholded Lasso  & 1.02& 0.96& 1.11& 1.54& 10.32& 29.38 & 53.81 \\ \hline 
\end{tabular}
\end{center}
\end{table}

\begin{figure}
\begin{center}
\begin{tabular}{cc}
\begin{tabular}{c}
\includegraphics[width=0.33\textwidth,angle=270]{./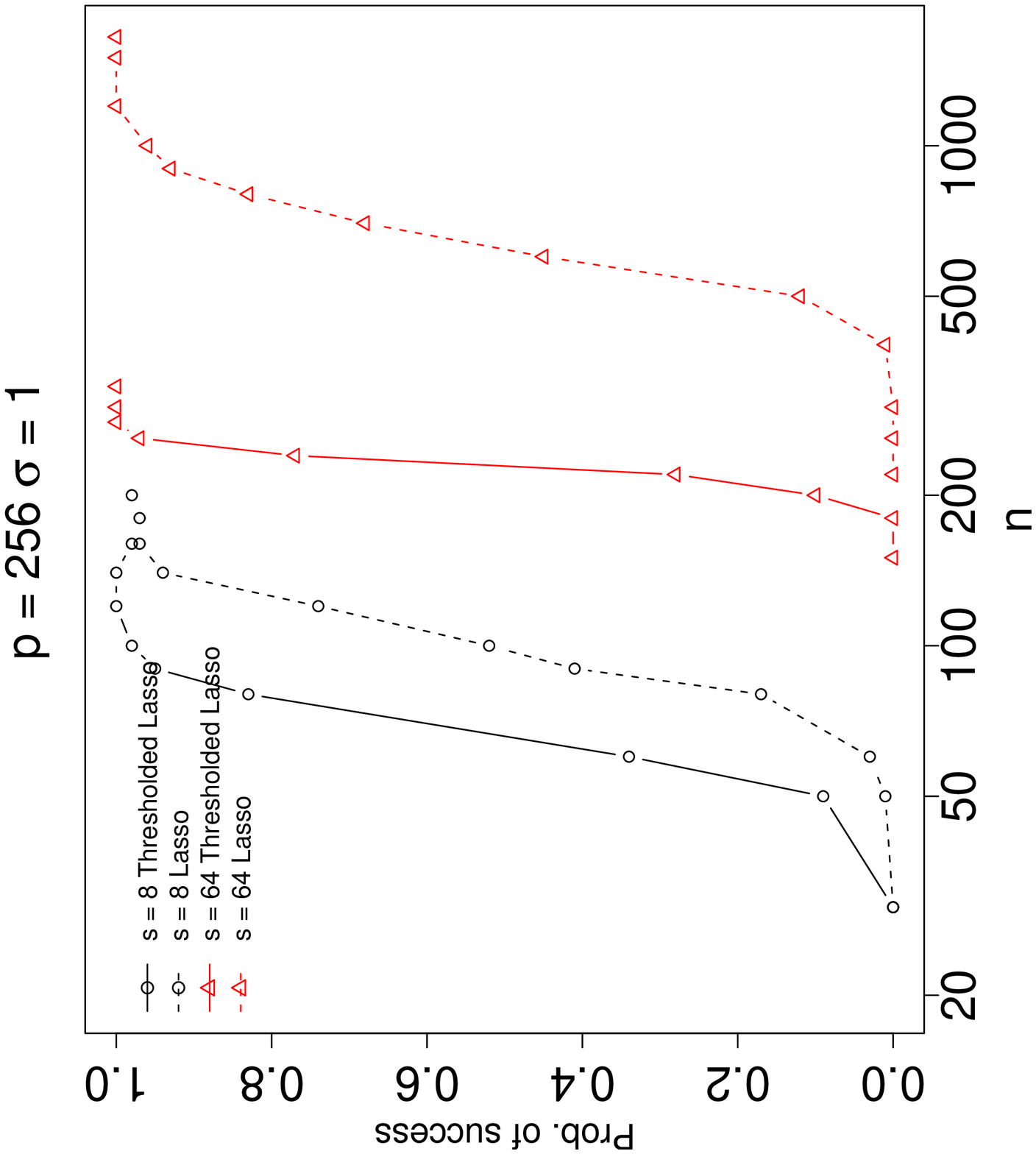}
\end{tabular}&
\begin{tabular}{c}
\includegraphics[width=0.33\textwidth,angle=270]{./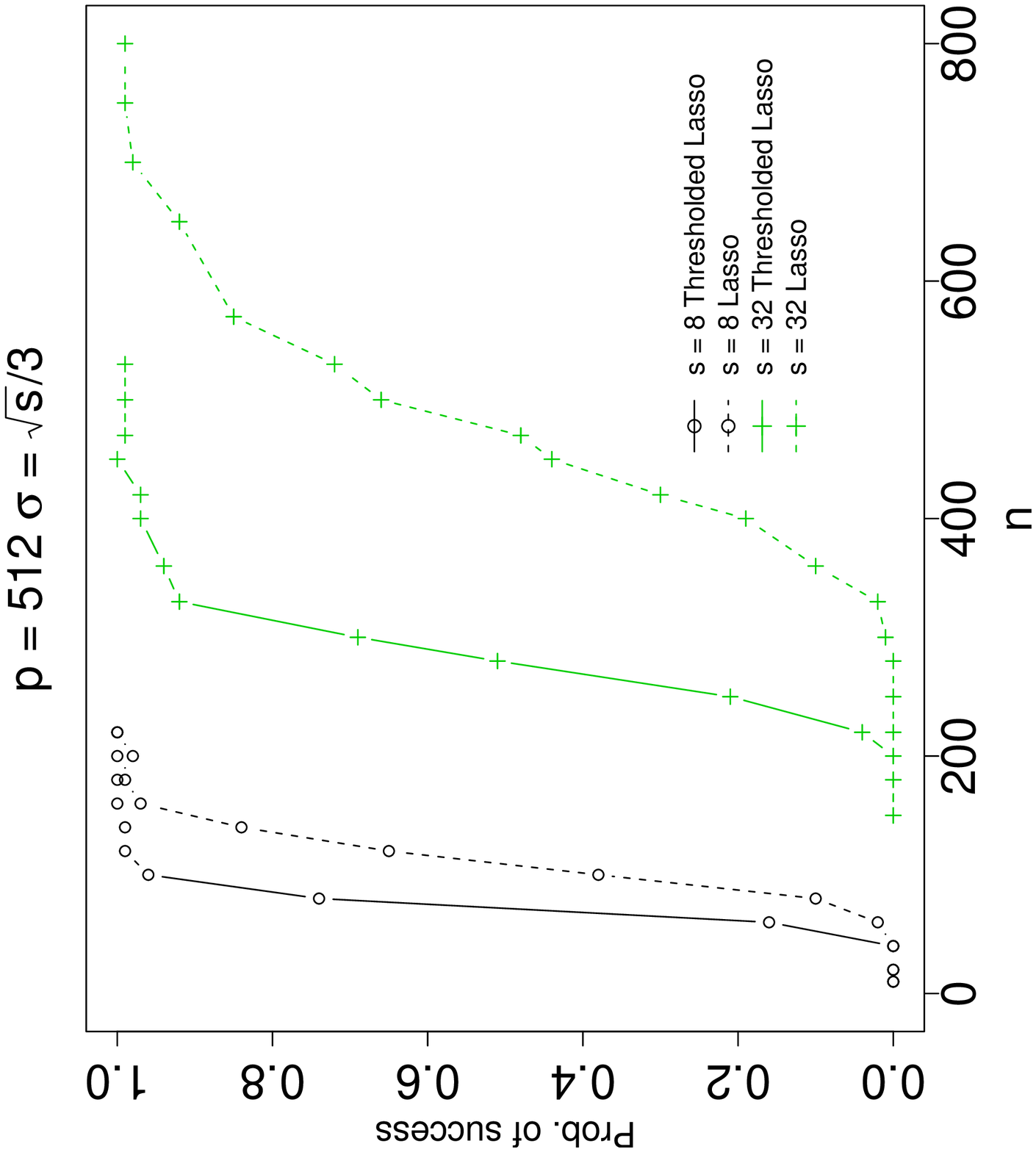}
\end{tabular} \\
(a)&(b) \\
\begin{tabular}{c}
\includegraphics[width=0.33\textwidth,angle=270]{./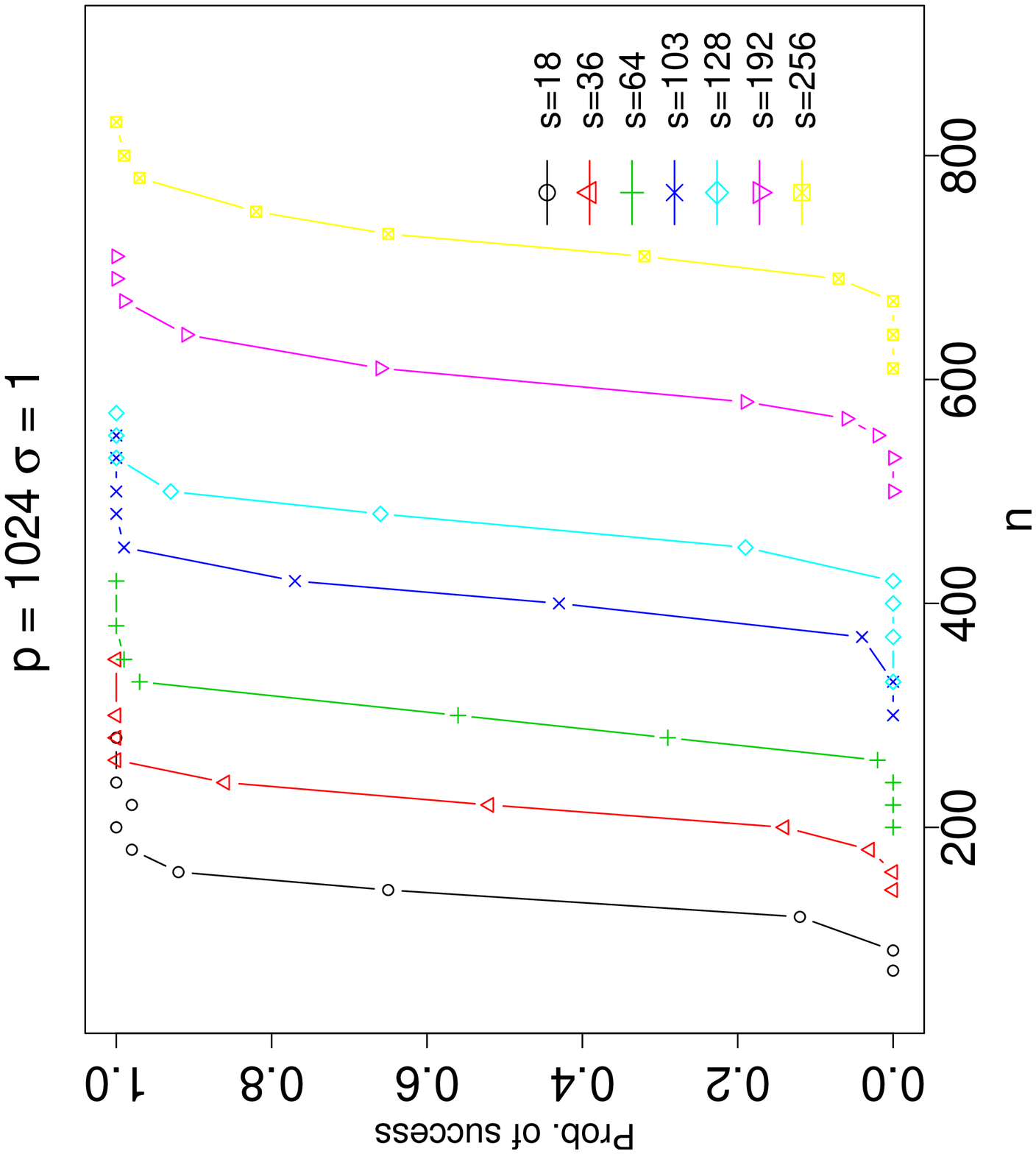}
\end{tabular}&
\begin{tabular}{c}
\includegraphics[width=0.33\textwidth,angle=270]{./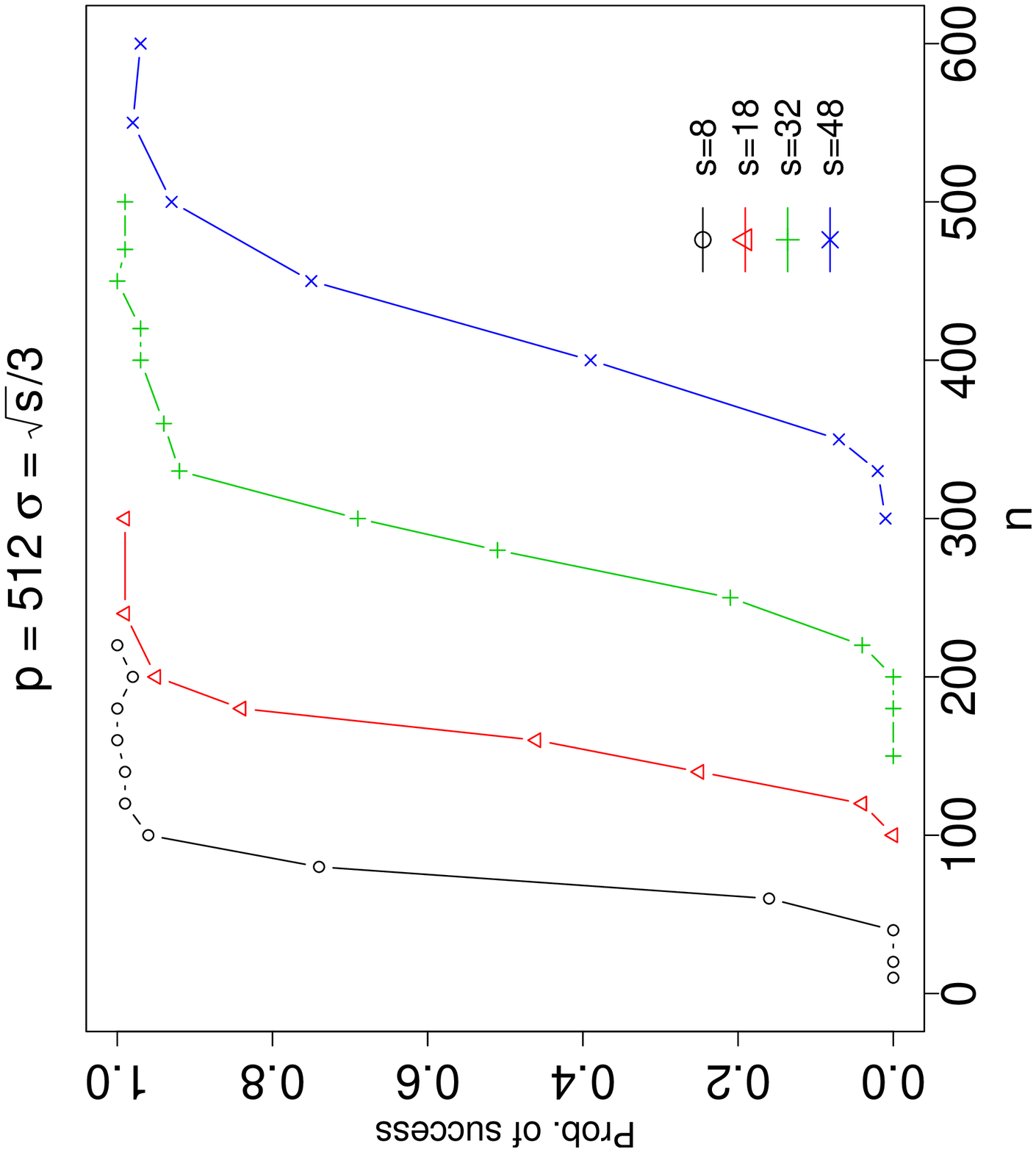}
\end{tabular} \\
(c)&(d) \\
\begin{tabular}{c}
\includegraphics[width=0.33\textwidth,angle=270]{./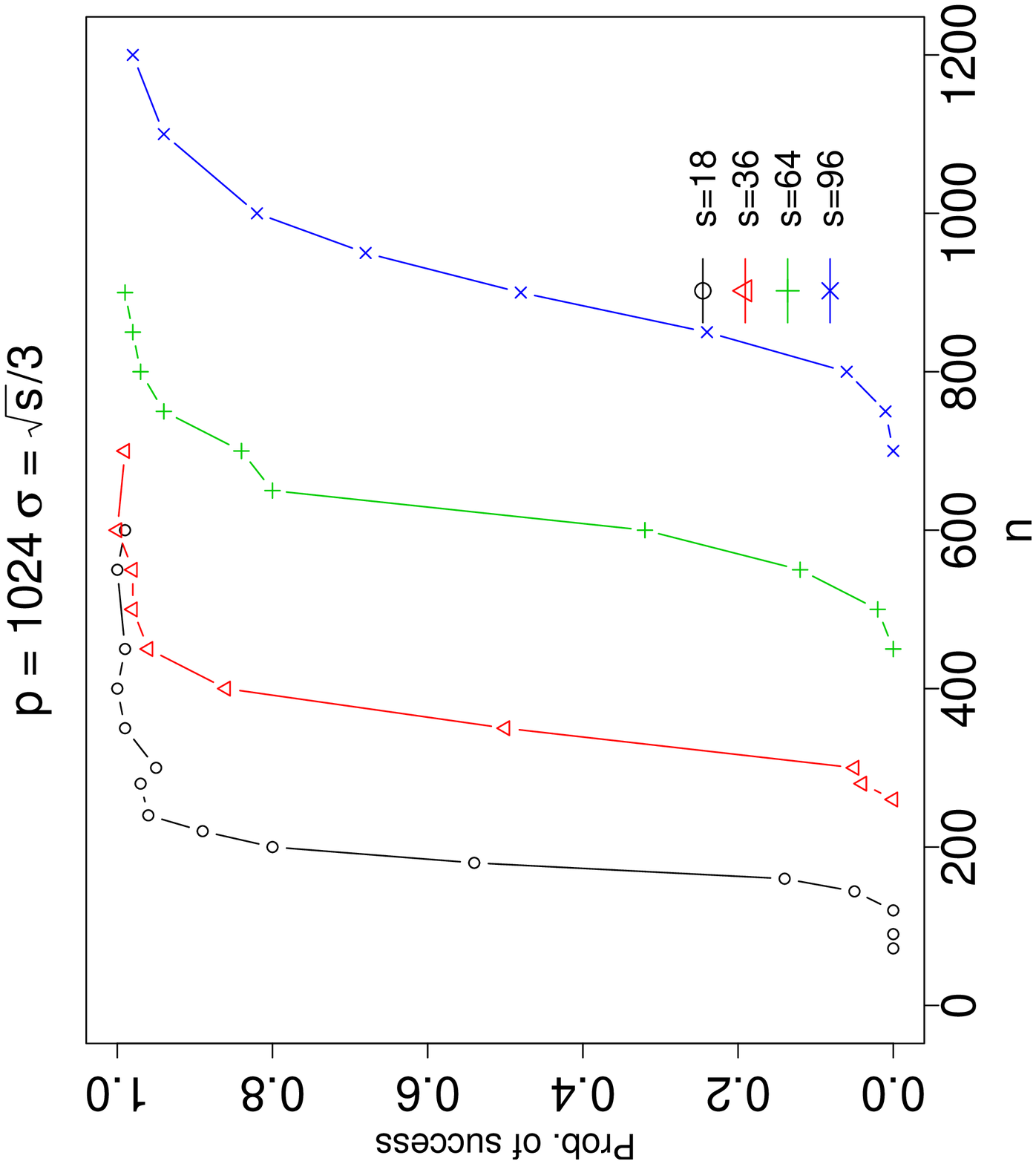}
\end{tabular}&
\begin{tabular}{c}
\includegraphics[width=0.33\textwidth,angle=270]{./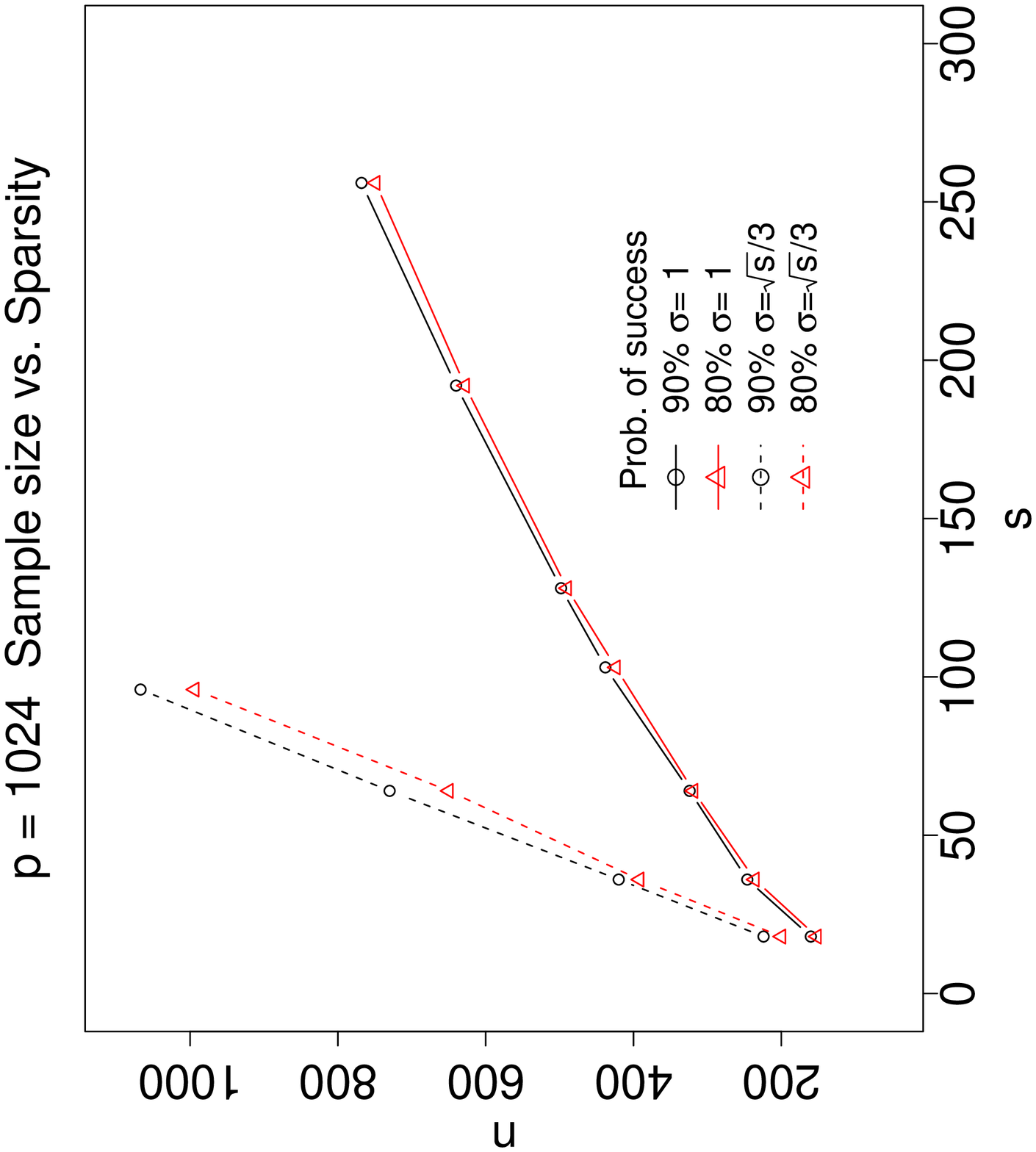}
\end{tabular} \\
(e)&(f) \\
\end{tabular}
\caption{(a) (b)
Compare the probability of success for $p = 256$ and $p=512$ under two 
noise levels. The Thresholded Lasso estimator requires much fewer samples
than the ordinary Lasso.
(c) (d) (e) show the probability of success of the Thresholded Lasso under 
different levels of sparsity and noise levels when $n$ increases for
$p =512$ and $1024$. (f) The number of samples $n$ increases almost linearly with 
$s$ for p = 1024. More samples are required to achieve the same level of success 
when $\sigma=\sqrt{s}/3$ due to the relatively low SNR.
}
\label{fig:succ-p256-four}
\end{center}
\end{figure}

\subsection{Linear Sparsity}
We next present results demonstrating that the Thresholded Lasso 
recovers a sparse model using a small number of samples per 
non-zero component in $\beta$ when $X$ is a subgaussian ensemble.  
We run under three cases of $p = 256, 512, 1024$; for each $p$, 
we increase the sparsity $s$ by roughly equal steps from 
$s= {0.2 p}/{\log (0.2 p)}$ to $p/4$.  For each $p$ and $s$, we run with different
sample size $n$.  For each tuple $(n, p, s)$, we run an experiment similar to
the one described in Section~\ref{subsec:type12} with an i.i.d. Gaussian 
ensemble $X$ being fixed while repeating Steps $2-3$ 100 times.
In Step 2, each randomly selected non-zero coordinate of $\beta$ is 
assigned a value of $\pm 0.9$ with probability $1/2$.
After each run, we compare $\hat{\beta}$ with the true $\beta$; if all
components match in signs, we count this experiment as a success. 
At the end of the 100 runs, we compute the percentage of successful runs 
as the probability of success.  We compare with the ordinary Lasso, 
for which we search over the full regularization path of LARS and
choose the $\breve{\beta}$ that best matches $\beta$ in terms of support.

We experiment with $\sigma =1$ and $\sigma = \sqrt{s}/3$.  
For $\sigma = 1$, we set $t_0 = f_t \sqrt{|\hat{S}_0|} \lambda \sigma$,
where $\hat{S}_0 =  \left\{j: \beta_{j, \init} \geq 0.5 \lambda_{n} = 0.35
\lambda \sigma \right\}$ for $\lambda_n$ as in~\eqref{eq::pen-exp}, 
and $f_t$ is chosen from the range of $[0.12, 0.24]$ 
(cf. Section~\ref{eq::large-beta}).  
For $\sigma=\sqrt{s}/3$, we set $t_0 = 0.7 \lambda \sigma$ with SNR 
being fixed.  The results are shown in Figure~\ref{fig:succ-p256-four}. 
We observe that under both noise levels, 
the Thresholded Lasso estimator requires much fewer samples than 
the ordinary lasso in order to conduct exact recovery of the sparsity 
pattern of the true linear model when all non-zero components are 
sufficiently large.  When $\sigma$ is fixed as $s$ increases, the
SNR is increasing; the experimental results illustrate the behavior of 
sparse recovery when it is  close to the noiseless setting.  
Given the same sparsity,  more samples are required for the low SNR case 
to reach the same level of success rate. 
Similar behavior was also observed for Toeplitz and Bernoulli 
ensembles with i.i.d. $\pm 1$ entries.


\subsection{ROC comparison}
We now compare the performance of the Thresholded Lasso estimator
with the Lasso and the Adaptive Lasso by examining their ROC curves.
Our parameters are $p=512$, $n=330$, $s=64$ and we run under two cases:
$\sigma = \sqrt{s}/3$ and $\sigma = \sqrt{s}$.
In the Thresholded Lasso, we vary the threshold level from 
$0.01 \lambda \sigma$ to $1.5\lambda \sigma$. 
For each threshold, we run the experiment described in 
Section~\ref{subsec:type12} with an i.i.d. Gaussian ensemble $X$ being fixed
while repeating Steps $2-3$ 100 times. After each run, we compute the FPR
and TPR of the $\hat{\beta}$, and compute their averages after 100 runs
as the FPR and TPR for this threshold.  For the Lasso, 
we compute the FPR and TPR for each output vector along its entire
regularization path.  
For the Adaptive Lasso, we use the {\em optimal} output $\tilde{\beta}$ in terms 
of $\ell_2$ loss from the initial Lasso penalization path as the input to its 
second step, that is, we set $\beta_{\init} :=  \tilde{\beta}$
and use $w_j = 1/\beta_{\init, j}$ to compute the weights 
for penalizing those non-zero components in $\beta_{\init}$ in
the second step, while all zero components of $\beta_{\init}$ are now 
removed. We then compute the FPR and TPR for each vector
that we obtain from the second step's LARS output.
We implement the algorithms as given in ~\cite{Zou06}, the details of which 
are omitted here as its implementation has become standard.
The ROC curves are plotted in 
Figure~\ref{fig:roc-gaussian}. The Thresholded Lasso performs better than
both the ordinary Lasso and the Adaptive Lasso; its advantage is 
more apparent when the SNR is high.

\begin{figure}
\begin{center}
\begin{tabular}{cc}
\begin{tabular}{c}
\includegraphics[width=0.35\textwidth,angle=270]{./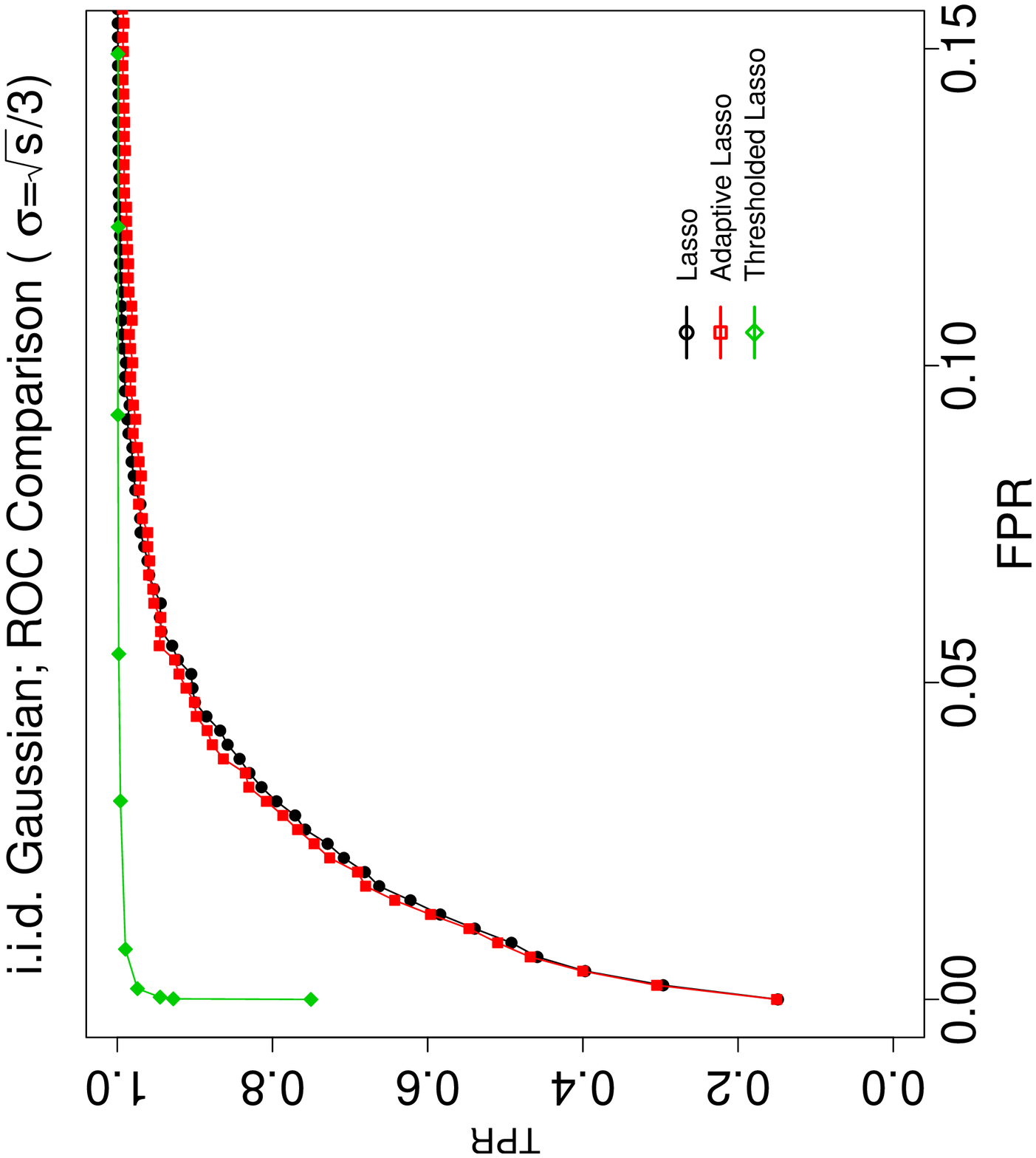}
\end{tabular}& 
\begin{tabular}{c}
\includegraphics[width=0.35\textwidth,angle=270]{./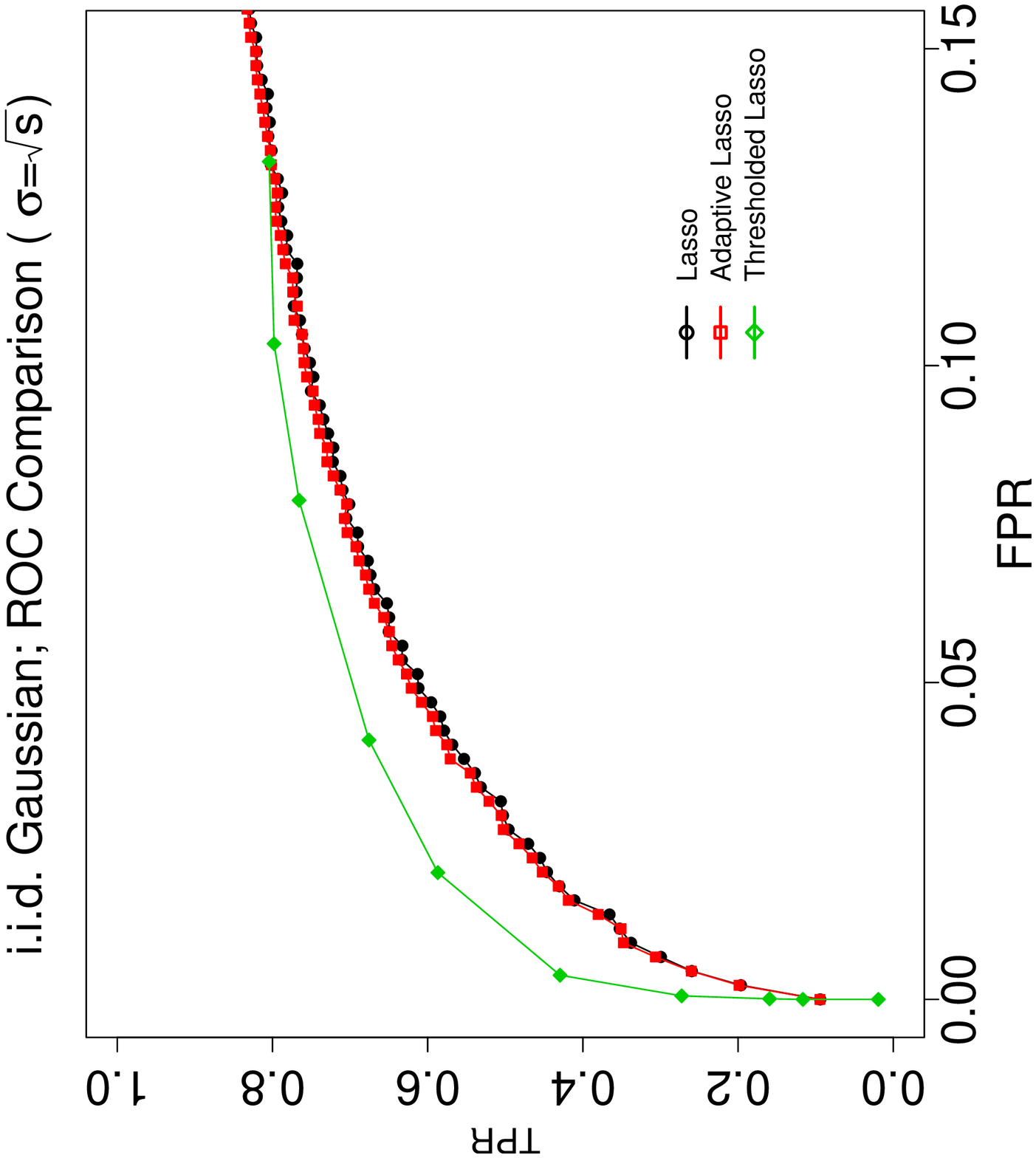} \\
\end{tabular} \\
(a) & (b) \\
\end{tabular}
\caption{$p=512$ $n=330$ $s=64$.  ROC for the Thresholded Lasso, ordinary Lasso and Adaptive
Lasso. The Thresholded Lasso clearly outperforms the ordinary Lasso and the
Adaptive Lasso for both high and low SNRs.
}
\label{fig:roc-gaussian}
\end{center}
\end{figure}

\section{Conclusion}
In this paper,
we show that the thresholding method is effective in 
variable selection and accurate in statistical estimation. 
It improves the ordinary Lasso in significant ways.
For example, we allow very significant  number of non-zero elements in the 
true parameter,  for which the ordinary Lasso would have failed.
On the theoretical side, we show that if $X$ obeys the RE
condition and if the true parameter is sufficiently sparse, the Thresholded 
Lasso achieves the $\ell_2$ loss within a logarithmic factor of the 
{\it ideal mean square error} one would achieve with an oracle, 
while selecting a sufficiently sparse model $I$.
This is accomplished when threshold level is at about 
$\sqrt{2\log p/n} \sigma$, assuming that columns of $X$ have 
$\ell_2$ norm $\sqrt{n}$.
We also report a similar result on the Gauss-Dantzig selector 
under the UUP, built upon results from~\cite{CT07}.
When the SNR is high, almost exact recovery of the 
non-zeros in $\beta$ is possible as shown in our theory; exact recovery 
of the support of $\beta$ is shown in our simulation study when $n$ 
is only linear in $s$ for several Gaussian and Bernoulli random ensembles.
When the SNR is relatively low, the inference task is difficult for any
estimator. In this case, we show that Thresholded Lasso tradeoffs 
Type I and II errors nicely: we recommend choosing the thresholding 
parameter conservatively. Algorithmic issues such as how to get an 
estimate on $\sigma$ and  parameters related to the incoherence 
conditions is left as future work. While the current focus is on $\ell_2$ 
loss, we are also interested in exploring the {\em sparsity oracle inequalities} for the 
Thresholded Lasso under the RE condition as studied in~\cite{BRT09} in our future
work.

\appendix
\section{Proof of Theorem~\ref{THM:RE}}
\label{sec:proof-thm-RE}
\begin{proofof2}
Proving  Theorem~\ref{THM:RE} involves showing that the Lasso and 
the Dantzig selector satisfy~\eqref{eq::2-normS-generic}.
These have been proved in~\cite{BRT09}.
Theorem~\ref{THM:RE} is then an immediate corollary of 
Theorem~\ref{THM:GENERIC} under assumptions therein.
We note that on $\T_a$, it holds that 
$\norm{\upsilon_{\init,\Sc}}_1 \leq k_0 \norm{\upsilon_{\init,S}}_1$, where 
$k_0 = 1$ for the  Dantzig selector when $\lambda_n \geq \basepen$ 
and $k_0 =3$ for the Lasso, when $\lambda_n \geq 2 \basepen$ for 
the Lasso.
Then on $\T_a$ as in~\eqref{eq::low-noise},~\eqref{eq::2-normS-generic}
holds with $B_0 =  4 K^2(s, 3)$ and $B_1 = 3 K^2(s, 3)$ for 
Lasso under $RE(s, 3, X)$ and \eqref{eq::2-normS-generic} 
holds with $B_0 = B_1 = 4 K^2(s, 1)$ 
for the Dantzig selector  under $RE(s, 1, X)$;
See~\cite{Zhou09c} for deriving the exact constants here.
\end{proofof2}

\section{Proof of Theorem~\ref{THM:PREDITIVE-ERROR}}
\begin{proofof}{Theorem~\ref{THM:PREDITIVE-ERROR}}
It is clear by construction that under $\T_a$, 
$X \hat{\beta}_I = P_{I} Y$ and $|I| \leq 2 s_0$.
Hence
\bens
\twonorm{X \hat{\beta}_I - X \beta} /\sqrt{n}
& = & 
\twonorm{(P_I - \Id) X \beta + P_{I} \epsilon} /\sqrt{n} \\
 & \leq &  
\twonorm{X_{I^c} \beta_{\drop}} /\sqrt{n}+ \twonorm{P_{I} \epsilon} /\sqrt{n} \\
& \leq & 
\sqrt{\Lambda_{\max}(s)} \twonorm{\beta_{\drop}} + 
\frac{\sqrt{|I|(1+ a) \Lambda_{\max}(|I|)} \lambda \sigma}
{\Lambda_{\min}(|I|)}
\eens
where we have on $\T_a$, for $\basepen = \sqrt{1+a} \lambda \sigma$, 
where $\lambda = \sqrt{2\log p/n}$,
\bens
\lefteqn{
\twonorm{X_I (X_{I}^T X_{I})^{-1}X_{I}^T \epsilon} /\sqrt{n}\leq
\twonorm{X_I (X_{I}^T X_{I}/n)^{-1}/\sqrt{n}} \twonorm{X_{I}^T \epsilon/n}} \\
& \leq &
\frac{\sqrt{\Lambda_{\max}(|I|)}\sqrt{|I|}\basepen }{\Lambda_{\min}(|I|)} 
 \leq 
\frac{\sqrt{|I| (1+a) \Lambda_{\max}(|I|)} \lambda \sigma}{\Lambda_{\min}(|I|)}
\eens
Now by Lemma~\ref{lemma:threshold-DS}
and~\ref{lemma:threshold-RE}, we have 
$\twonorm{\beta_{\drop}} \leq C \sqrt{s_0} \lambda \sigma$ 
for some constant $C$.
\end{proofof}

\section{Proof of Proposition~\ref{PROP:COUNTING-S0}}
\begin{proofof2}
Recall that $|\beta_j| \leq \lambda \sigma$ for all $j > a_0$ as 
defined in~\eqref{eq::define-a0}; hence
for $\lambda = \sqrt{2 \log p/n}$, we have by~\eqref{eq::SR-range},
$\sum_{i> a_0}^p \min(\beta_i^2, \lambda^2 \sigma^2)= 
\sum_{i> a_0}^s \beta_i^2 \leq  (s_0 - a_0) \lambda^2 \sigma^2$;
hence
\bens
\size{ \{j \in A_0^c: |\beta_j| \geq  \sqrt{\log p/(c' n)} \sigma \}}
& \leq &  2 c' (s_0  - a_0) \text{ where }  \size{T_0 \setminus A_0} = s_0 - a_0.
\eens
Now given that $\beta_i \geq \beta_j$ for all $i \in T_0, j \in T_0^c$,
the proposition holds.
\end{proofof2}

\section{Proof of Theorem~\ref{THM:GENERIC}}
\label{appendix:thm:generic}
We first state two lemmas. Define
$\upsilon_{\init} = \beta_{\init} - \beta$ and $\upsilon^{(i)} = \hat\beta^{(i)} - \beta$.
\begin{lemma}
\label{prop:beta-min}
Under assumptions in Theorem~\ref{THM:GENERIC}, suppose on 
$\T_a \cap Q_b$,
\ben 
\label{eq::beta-min-general}
\beta_{\min} \geq \Xi + \Gamma \; \text{ where} 
\; \; \Xi := \max_{i = 0, 1} \norm{\upsilon^{(i)}_S}_{\infty} \; 
\text{ and }  \; \Gamma := \max_{i = 0, 1} t_i.
\een
Then $S \subseteq \hat{S}_2 \subseteq  \hat{S}_1$.
\end{lemma}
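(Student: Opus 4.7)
The strategy is to trace, coordinate by coordinate, what happens to the entries of $S$ across the two thresholding rounds, using one master inequality. For any $j \in S$ and any $i \in \{0,1\}$, the triangle inequality yields
\begin{equation*}
|\hat\beta^{(i)}_j| \;\geq\; |\beta_j| - |\upsilon^{(i)}_j| \;\geq\; \beta_{\min} - \|\upsilon^{(i)}_S\|_\infty \;\geq\; \beta_{\min} - \Xi \;\geq\; \Gamma \;\geq\; t_i,
\end{equation*}
where the penultimate inequality is the hypothesis~\eqref{eq::beta-min-general} and the last is the definition $\Gamma = \max_{i=0,1} t_i$. Thus, whenever $j$ lies in $\hat S_i$ (so that $\hat\beta^{(i)}_j$ is the relevant coordinate in the current OLS fit), its magnitude is guaranteed to survive the threshold $t_i$ and therefore to land in $\hat S_{i+1}$.

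Applying this display at $i=0$, where $\hat\beta^{(0)} = \beta_{\init}$, I would first conclude that $|\beta_{j,\init}| \geq \Gamma$ for every $j \in S$; combined with $\Gamma \geq 4\lambda_n$ (since $|\hat S_0| \geq 1$ forces $t_0 \geq 4\lambda_n$), this already puts $S$ inside $\hat S_0$. The same bound then yields $|\beta_{j,\init}| \geq t_0$, so $S \subseteq \hat S_1$. Since $\hat\beta^{(1)}$ is the OLS estimator on the enlarged support $\hat S_1 \supseteq S$, the display with $i=1$ gives $|\hat\beta^{(1)}_j| \geq t_1$ for every $j \in S$, yielding $S \subseteq \hat S_2$. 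The inclusion $\hat S_2 \subseteq \hat S_1$ is automatic from the construction, as the second thresholding only prunes $\hat S_1$.

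The one mildly delicate point, and the part I would think of as the main obstacle, is the bootstrap at the initial step: the screening threshold defining $\hat S_0$ is the fixed value $4\lambda_n$ rather than the data-dependent $t_0$, so invoking the master inequality rigorously requires $\Gamma \geq 4\lambda_n$. I would resolve this by a short contradiction argument: if $\hat S_0$ were empty then every $j \in S$ would satisfy $|\beta_{j,\init}| \leq 4\lambda_n$, forcing $\|\upsilon^{(0)}_S\|_\infty \geq \beta_{\min} - 4\lambda_n$ and thereby collapsing the hypothesis $\beta_{\min} \geq \Xi + \Gamma$ into a trivial regime that is not the one the lemma addresses. Outside this degenerate case $|\hat S_0| \geq 1$ holds, whence $\Gamma \geq 4\lambda_n$, and the remainder of the argument reduces to the three mechanical applications of the single triangle bound above.
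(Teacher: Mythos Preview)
Your core argument is exactly the paper's: for $j\in S$ and $i=0,1$, the single triangle-inequality display $|\hat\beta^{(i)}_j|\ge \beta_{\min}-\|\upsilon^{(i)}_S\|_\infty\ge \beta_{\min}-\Xi\ge\Gamma\ge t_i$ is precisely what the paper writes (it even notes $\Gamma=t_0$, since $\hat S_1\subseteq \hat S_0$ forces $t_1\le t_0$). So the substantive part of your proof matches the paper line for line.

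The extra paragraph you devote to the ``bootstrap'' issue is unnecessary, and the way you resolve it is not quite a proof. The lemma explicitly imports the assumptions of Theorem~\ref{THM:GENERIC}, which include both the $\ell_2$ bound $\|\upsilon_{\init,S}\|_2\le B_0\lambda_n\sqrt{s}$ on $Q_b$ and the quantitative lower bound~\eqref{eq::beta-min-specific} on $\beta_{\min}$; together these give $|\beta_{j,\init}|\ge \beta_{\min}-B_0\lambda_n\sqrt{s}>4\lambda_n$ for every $j\in S$ directly, so $S\subseteq\hat S_0$ and $|\hat S_0|\ge 1$ without any contradiction argument. Your contradiction sketch, by contrast, only shows that if $\hat S_0=\emptyset$ then the hypothesis $\beta_{\min}\ge\Xi+\Gamma$ becomes $\beta_{\min}\ge\Xi$, which is not a contradiction; calling this ``a trivial regime that is not the one the lemma addresses'' is hand-waving, not an argument. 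Drop that paragraph and simply invoke the standing assumptions of Theorem~\ref{THM:GENERIC}, as the paper does implicitly.
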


\begin{proof}
We have $\forall j \in S$
$\beta_{\init, j} \geq  \beta_{\min} - \norm{\upsilon_{\init, S}}_{\infty} 
\geq  \beta_{\min} - \Xi \geq \Gamma = t_0$ and \\
$\hat{\beta}^{(1)}_{j} \geq
\beta_{\min} - \norm{\upsilon^{(1)}_{S}}_{\infty} 
 \geq \beta_{\min} - \Xi \geq \Gamma \geq t_1.$
Thus the lemma holds by definition of $\hat{S}_{i}$, for $i = 0, 1, 2$.
\end{proof}
The following lemma follows from Lemma~\ref{prop:MSE-missing},
by plugging in $\twonorm{\beta_{\drop}}  = 0$.
\begin{lemma}
{\textnormal{({\bf $\ell_2$-loss for the OLS estimators})}}
\label{prop:MSE}
Suppose that $I \supseteq S$ and $|I| \leq 2s$, then
the OLS estimator $\hat{\beta}_I := (X_I^T X_{I})^{-1} X_{I}^T Y$ 
satisfies on $\T_a$,
$\shtwonorm{\hat{\beta}_{I} - \beta} 
\leq  {\basepen \sqrt{|I|}}/{\Lambda_{\min}(|I|)}$
which satisfies~\eqref{eq::2-norm-generic} with 
$B_2 = 1/(B \Lambda_{\min}(2s))$.
\end{lemma}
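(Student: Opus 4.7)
The plan is to derive the bound by a direct computation; invoking Lemma~\ref{prop:MSE-missing} with $\twonorm{\beta_{\drop}} = 0$ works too, but a direct argument is short and self-contained. The key observation is that $I \supseteq S$ forces $\beta_{I^c} = 0$, so the linear model reduces to $Y = X_I \beta_I + \e$. Consequently the OLS residual admits the clean identity
\begin{equation*}
\hat{\beta}_I - \beta_I \;=\; (X_I^T X_I)^{-1} X_I^T \e,
\end{equation*}
and since $\beta_{I^c} = 0$ and we define $\hat\beta_{j} = 0$ on $I^c$, we have $\twonorm{\hat\beta_I - \beta}^2 = \twonorm{(X_I^T X_I)^{-1} X_I^T \e}^2$.

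From there, I would first factor out an $n$ to obtain $\inv{n} (X_I^T X_I/n)^{-1} X_I^T \e$, then bound the operator norm of $(X_I^T X_I/n)^{-1}$ by $1/\Lambda_{\min}(|I|)$ using~\eqref{eq::eigen-cond}, and bound $\twonorm{X_I^T \e/n}$ by $\sqrt{|I|}\, \norm{X^T\e/n}_{\infty}$. On the event $\T_a$ this last $\ell_\infty$ norm is at most $\basepen$ by definition~\eqref{eq::low-noise}, so combining these three estimates yields
\begin{equation*}
\shtwonorm{\hat{\beta}_I - \beta} \;\leq\; \frac{\basepen \sqrt{|I|}}{\Lambda_{\min}(|I|)},
\end{equation*}
which is the first assertion of the lemma.

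For the second inequality, I would simply substitute the hypotheses $|I| \leq 2s$ and $\lambda_n \geq B \basepen$. The first implies $\Lambda_{\min}(|I|) \geq \Lambda_{\min}(2s)$ (since $\Lambda_{\min}$ is non-increasing in the sparsity level) and $\sqrt{|I|} \leq \sqrt{2s}$; the second gives $\basepen \leq \lambda_n/B$. Chaining these substitutions produces $\basepen\sqrt{|I|}/\Lambda_{\min}(|I|) \leq \lambda_n \sqrt{2s}/(B\Lambda_{\min}(2s)) = \lambda_n B_2 \sqrt{2s}$, matching the claim that this bound agrees with~\eqref{eq::2-norm-generic} under $B_2 = 1/(B\Lambda_{\min}(2s))$.

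There is really no technical obstacle here; the lemma is a routine corollary of the noise-event bound and the eigenvalue lower bound. The only step that deserves any care is being explicit that $\beta_{I^c} = 0$ (so that no bias term survives), and I would state that at the outset to make the chain of inequalities transparent.
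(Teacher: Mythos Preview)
Your proposal is correct and matches the paper's approach: the paper states that the lemma follows from Lemma~\ref{prop:MSE-missing} by plugging in $\twonorm{\beta_{\drop}}=0$, and the noise-term bound inside the proof of Lemma~\ref{prop:MSE-missing} (namely $\twonorm{(X_I^T X_I)^{-1}X_I^T\e}\leq \sqrt{|I|}\,\basepen/\Lambda_{\min}(|I|)$ via \eqref{eq::eigen-cond}) is exactly the direct computation you wrote out. You even mention the Lemma~\ref{prop:MSE-missing} route explicitly, so there is nothing to add.
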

\silent{
\begin{proof}
By definition of $\hat{\beta}_{I}$, we have 
\bens
\hat{\beta}_{I}
& = & (X_{I}^T X_{I})^{-1}X_{I}^T (X \beta + \epsilon) 
= (X_{I}^T X_{I})^{-1} X_{I}^T X_{I} \beta_{I} + 
(X_{I}^T X_{I})^{-1}X_{I}^T \epsilon  \\
& = &
\beta + (X_{I}^T X_{I})^{-1}X_{I}^T \epsilon
\eens
given that $I \supseteq S$; hence on $\T_a$, where $a \geq 0$, 
we have by~\eqref{eq::eigen-cond},
\bens
\twonorm{\hat{\beta}^{(i)} - \beta} = 
\twonorm{(X_{I}^T X_{I})^{-1}X_{I}^T \epsilon} \leq
\twonorm{(X_{I}^T X_{I}/n)^{-1}} 
\twonorm{X_{I}^T \epsilon/n}
\leq \frac{\sqrt{|I|}  \basepen}{\Lambda_{\min}(|I|)}
\eens
and thus~\eqref{eq::2-norm-MSE} holds.
\end{proof}
}
\begin{proofof}{\textnormal{Theorem~\ref{THM:GENERIC}}}
It is clear by construction that  
\ben
\label{eq::closure}
\hat{S}_2 \subseteq \hat{S}_1 \subseteq \hat{S}_0.
\een
Recall that $\hat{S}_0$ is obtained by thresholding 
$\beta_{\init}$ with $4 \lambda_n$, 
hence by~\eqref{eq::2-normS-generic},
we have 
$$|\hat{S}_0 \setminus S| \leq \frac{\norm{\upsilon_{\init, S^c}}_1}{4 \lambda_n}
\; \leq \; \frac{B_1 \lambda_n s}{4 \lambda_n} \leq  \frac{B_1 s}{4}.$$
\begin{enumerate}
\item
If $B_1 \leq 4$, we have that $|\hat{S}_0| \leq 2s$;
\item
Otherwise, we have 
$|\hat{S}_0| \leq s + {B_1 s}/{4} \leq {B_1 s}/{2}.$
\end{enumerate}
Hence for 
$t_i = 4 \lambda_n \sqrt{|\hat{S}_{i}|}, \forall i =0,1$ and 
$\Gamma$ as in \eqref{eq::beta-min-general},
it holds by~\eqref{eq::closure} that
\ben
\label{eq::bound-Gamma}
\Gamma = t_0 = 4 \lambda_n \sqrt{|\hat{S}_{0}|} 
\leq
\lambda_n \sqrt{s} \max\left(2\sqrt{2B_1}, 4\sqrt{2} \right).
\een
Now given~\eqref{eq::beta-min-specific} and~\eqref{eq::2-normS-generic},
we have $\forall j \in S$,
\bens
\label{eq::S-bleiben-init}
\beta_{\init, j} & \geq & \beta_{\min} - \norm{\upsilon_{\init, S}}_{\infty} 
\geq \beta_{\min} -  \twonorm{\upsilon_{\init, S}} \geq \Gamma = t_0,
\eens
and hence it holds that $S \subseteq \hat{S}_1 \subseteq \hat{S}_0$
by construction of $\hat{S}_1$, and hence $t_0 \geq 4 \lambda_n \sqrt{s}$.
Now by~\eqref{eq::2-normS-generic}, we have for $s \geq {B_1^2}/{16}$,  
\ben
\label{eq::T-1-general}
& & 
|\hat{S}_1 \setminus S| < \frac{\norm{\upsilon_{\init, \Sc}}_1}{t_0}
\leq  \frac{B_1 \lambda_n s}{4 \lambda_n \sqrt{s}} <
\frac{B_1 \sqrt{s}}{4} < s; \; \text{ and } \; |\hat{S}_1| < 2s.
\een
For  the OLS estimator $\hat{\beta}^{(1)}$ with $I =  \hat{S}_1$, by 
Lemma~\ref{prop:MSE}, we have on $\T_a$
\bens
\twonorm{\hat{\beta}^{(1)} - \beta} \leq 
\frac{\basepen \sqrt{s_1}}{\Lambda_{\min}(s_1)}
\leq \frac{\lambda_n \sqrt{s_1}}{B \Lambda_{\min}(2s)} 
\leq B_2 \lambda_n \sqrt{2s}, \text{ where } \; s_1 := \size{\hat{S}_1}
\eens
where $\lambda_n \geq B \basepen$, for $\basepen$ as 
in~\eqref{eq::low-noise}, and $B_2 = 1/(B \Lambda_{\min}(2s))$.
Clearly we have by definition of $\Xi$ in~\eqref{eq::beta-min-general},
\bens
\label{eq::bound-Xi}
\Xi & \leq & \max_{i = 0, 1} \norm{\upsilon^{(i)}_S}_{2} 
\; \leq \; \max\{B_0, \sqrt{2} B_2\} \lambda_n \sqrt{s}
\eens
and thus $\beta_{\min} \geq \Xi + \Gamma$ holds 
given~\eqref{eq::beta-min-specific} and~\eqref{eq::bound-Gamma}.
By Lemma~\ref{prop:beta-min}, we have 
$\hat{S}_i \supseteq S, \forall i = 0, 1, 2$.
It remains to show~\eqref{eq::extra-variables} 
and~\eqref{eq::2-norm-generic};
Upon thresholding $\hat{\beta}^{(1)}$ with $t_1$, we have
for $s_1 := \size{\hat{S}_1}$ and $\lambda_n \geq B \basepen$,
\bens
|\hat{S}_2 \setminus S| \leq 
{\twonorm{\upsilon^{(1)}}^2}/{t_1^2}  \leq 
\left(\frac{\basepen \sqrt{s_1}}{\Lambda_{\min}(s_1)}
\cdot \inv{4 \lambda_n \sqrt{s_1}} \right)^2 \leq
\inv{16 B^2 \Lambda_{\min}^2(s_1)}.
\eens
Now for the final estimator in~\eqref{eq::final-estimator},
we have on $\T_a \cap Q_b$ by  Lemma~\ref{prop:MSE},\\
$\twonorm{\hat{\beta}^{(2)} - \beta}  =  \twonorm{\hat{\beta} - \beta}  
= {\basepen \sqrt{|\hat{S}_2|}}/{\Lambda_{\min}(|\hat{S}_2|)}
 \leq \lambda_n B_2 \sqrt{2s}$.
\end{proofof}

\section{Proofs for the Gauss-Dantzig selector}
\label{sec:append-guass}
Recall $\beta_{\init}$ is the solution to the Dantzig selector.
We write $\beta = \beta^{(1)} + \beta^{(2)}$ where
\bens
\beta_{j}^{(1)} =  \beta_{j} \cdot 1_{1 \leq j \leq s_0} \; \text{ and } \;
\beta_{j}^{(2)} =   \beta_{j} \cdot 1_{j> s_0}.
\eens
Let $h = \beta_{\init} - \beta^{(1)}$, where $\beta^{(1)}$ is hard-thresholded 
version of $\beta$, localized to 
$T_0 = \{1, \ldots, s_0\}$.
Let $T_1$ be the $s_0$ largest positions of $h$ outside of $T_0$; 
Let $T_{01} = T_0 \cup T_1$.
\silent{
 hence
$$\twonorm{h_{T_{01}}}^2 = 
\twonorm{h_{T_{0}}}^2 +  \twonorm{h_{T_{1}}}^2
= \twonorm{\beta_{T_0, \init} - \beta_{T_0}}^2 + 
\twonorm{\beta_{T_1, \init}}^2.$$}
The proof of Proposition~\ref{prop:DS-oracle}(cf.~\cite{CT07}) yields 
the following:
\ben
\label{eq::DS-T01-2-bound}
\twonorm{h_{T_{01}}} & \leq &  C'_0 \lambda_{p, \tau} \sigma \sqrt{s_0}, 
\text{ for }\; C'_0 \; \text{ as in}~\eqref{eq::DS-constants} \\
\label{eq::DS-T0c-1-bound}
\norm{h_{T_0^c}}_1 & \leq & C_1 \lambda_{p, \tau} \sigma s_0, \; 
\text{ where } \; C_1 = \left(C'_0 + \frac{1 + \delta}{1 - \delta - \theta}\right), 
\; \text{ and } \\
\twonorm{h_{T_{01}^c}}
& \leq & 
\norm{h_{T_0^c}}_1/\sqrt{s_0} \; \leq \; C_1 \lambda_{p, \tau} \sigma \sqrt{s_0},
\; \text{ (cf. Lemma~\ref{lemma::h01-bound-CT}).}
\een
\silent{
where in

We first prove Lemma~\ref{lemma:threshold-DS},
which applies to the Gauss-Dantzig selector. We then prove 
Lemma~\ref{prop:MSE-missing}, which is used  also for the 
Thresholded Lasso.}

\begin{proofof}{\textnormal{Lemma~\ref{lemma:threshold-DS}}}
Consider the set 
$I \cap T_0^c := \{j \in T_0^c: \size{\beta_{j, \init}} > t_0\}$.
It is clear by definition of $h = \beta_{\init} - \beta^{(1)}$ 
and~\eqref{eq::DS-T0c-1-bound} that
\ben
\label{eq::set-count-T0-c}
\size{I \cap T_0^c} \leq \norm{\beta_{T_0^c, \init}}_1/t_0 
= \norm{h_{T_0^c}}_1/t_0 < s_0,
\een
where $t_0 \geq C_1 \lambda_{p, \tau} \sigma$.
Thus $|I| = |I \cap T_0| + |I \cap T_0^c| \leq 2s_0$;
Now~\eqref{eq::ideal-size} holds given~\eqref{eq::set-count-T0-c} and
$|I  \cup S| = |S| + |I \cap \Sc| \leq s +  |I \cap T_0^c| < s+ s_0.$
We now bound $\twonorm{\beta_{\drop}}^2$.
By~\eqref{eq::DS-T01-2-bound} and~\eqref{eq::off-beta-norm-bound-2},
where $\drop_{11} \subset T_0$,
we have for $t_0 < C_4 \lambda_{p, \tau} \sigma \sqrt{s_0}$, \\
$\twonorm{\beta_{\drop}}^2  
\leq  (s_0 - a_0) \lambda^2 \sigma^2 + 
(t_0 \sqrt{s_0} + \twonorm{h_{T_0}})^2 \leq 
((C_4 + C_0')^2 + 1)  \lambda_{p, \tau}^2 \sigma^2.$
\end{proofof}
\silent{
\begin{remark}
It is worth interpreting the bounds as above. By definition
\bens
h_{T_0} =  \beta_{\init, T_0} - \beta_{T_0} \; \text{ and } \; 
h_{T_0^c} =  \beta_{T_0^c, \init}.
\eens
hence a bound on $\norm{h_{T_0}}_2 = \norm{h_{T_0}}_2$ and 
$\norm{h_{T_0^c}}_1 = \norm{\beta_{T_0^c, \init}}_1$
will allow us to threshold the ``non-significant'' set of variables (which includes
those in $\Sc$) in $T_0^c$ effectively, without erasing too many 
variables from $T_0$. 
\end{remark}
Note that we can also provide a slightly stronger bound 
than~\eqref{eq::set-count-T0-c} on the number of variables 
that we select from $T_{01}$; this is obtained using the bound 
in~\eqref{eq::DS-T01-2-bound} on $h_{T_{01}}$:
\bens
|I  \cap T_{01}|  & = & |I  \cap T_0|  + |I  \cap T_1| 
\leq   s_0 + \frac{\twonorm{\beta_{T_1, \init}}^2}{t_0^2}  
\leq s_0 + \frac{\twonorm{h_{T_{01}}}^2}{t_0^2} \\ 
& \leq &  
s_0 + \frac{(C'_0 \lambda_{p, \tau} \sigma)^2 s_0}
{(C_1 \lambda_{p, \tau} \sigma)^2} \leq  s_0 (1 + (C'_0/C_1)^2 ),
\eens
which implies that we can never select every variable in $T_{01}$; 
recall, $T_1$ is the largest $s_0$ positions of $\beta_{\init}$ outside of $T_0$, 
while $T_0$ corresponds to the  largest $s_0$ positions of $\beta$ itself.

Let $\drop_1 := \drop \cap T_0$ and $\drop_2 := \drop \cap T_0^c$.
Thus 
$$\beta_{\drop}^{(1)} := (\beta_j)_{j \in T_0 \cap \drop}$$
consists of  coefficients that are significant relative to $\lambda \sigma$, 
but are dropped as $\beta_{j, \init} < t_0$; and $\beta_{\drop}^{(2)}$ 
consists of those below $\lambda \sigma$ in magnitude that are dropped. 
Hence 
\ben
\label{eq::drop-decompose}
\twonorm{\beta_{\drop}}^2 = \twonorm{\beta_{\drop}^{(1)}}^2 + 
\twonorm{\beta_{\drop}^{(2)}}^2.
\een
Now it is clear $\beta_{\drop}^{(2)}$ is bounded given~\eqref{eq::beta-2-small}, 
indeed, we have for $\lambda = \sqrt{\log p/n}$, 
\ben
\label{eq::drop-bound-2} 
\twonorm{\beta_{\drop}^{(2)}}^2 \leq \twonorm{\beta^{(2)}}^2 
 =  \sum_{j > s_0} \beta_j^2 = \sum_{j > s_0} \min(\beta_j^2, \lambda^2 \sigma^2)
\leq s_0 \lambda^2 \sigma^2,
\een
where the second equality is by~\eqref{eq::beta-2-small} and the 
last inequality is by definition of $s_0$ as in~\eqref{eq::define-s0}.
We now focus on $\beta_{\drop}^{(1)}$, where $|\drop_1| < s_0$; 
we have by the triangle inequality,
\ben
\nonumber
\twonorm{\beta_{\drop}^{(1)}}
& \leq & 
\nonumber
\twonorm{\beta_{\drop_1, \init}} + 
\twonorm{\beta_{\drop_1, \init} - \beta_{\drop}^{(1)}} \\
& \leq &
\label{eq::drop-bound-1}
t_0 \sqrt{\size{\drop_1}} +  \twonorm{h_{T_0}} \leq 
(C_4 + C_0') \lambda_{p, \tau} \sigma \sqrt{s_0}.
\een
where we used the fact that 
$\twonorm{\beta_{\drop_1, \init} - \beta_{\drop}^{(1)}} 
:= \twonorm{h_{\drop_1}} \leq \twonorm{h_{T_0}}$ 
and~\eqref{eq::DS-T01-2-bound};
Hence~\eqref{eq::off-beta-norm-bound} holds
given~\eqref{eq::drop-decompose}
~\eqref{eq::drop-bound-1} and~\eqref{eq::drop-bound-2}.
}
\begin{proofof}{\textnormal{Lemma~\ref{prop:MSE-missing}}}
Note that $X_{I^c} \beta_{I^c} = X_{\dropS} \beta_{\dropS}$.
We have 
\ben
\nonumber
& & 
\hat\beta_{I} = (X_{I}^T X_{I})^{-1}X_{I}^T Y 
 = (X_{I}^T X_{I})^{-1} X_{I}^T (X_I \beta_I + X_{I^c} \beta_{I^c} + \epsilon)  \\
\nonumber
&  & \; \; \; \; =
\beta_I + (X_{I}^T X_{I})^{-1} X_{I}^T X_{\dropS} \beta_{\dropS} +
(X_{I}^T X_{I})^{-1}X_{I}^T \epsilon; \\ \nonumber
\lefteqn{\text{ Hence } \; \;
\twonorm{\hat{\beta}_I - \beta_{I}}  =  
\twonorm{
(X_{I}^T X_{I})^{-1} X_{I}^T X_{\dropS} \beta_{\dropS} +
(X_{I}^T X_{I})^{-1}X_{I}^T \epsilon} } \\
\label{eq::appen-two-terms}
& & \; \; \; \leq 
\twonorm{(X_{I}^T X_{I})^{-1} X_{I}^T X_{\dropS} \beta_{\dropS}} +
\twonorm{(X_{I}^T X_{I})^{-1}X_{I}^T \epsilon},
\een
where the second term is bounded as Lemma~\ref{prop:MSE}:
we have on $\T_a$,
\ben
\label{eq::noise-two-norm-bound-new}
\; \; \; \twonorm{(X_{I}^T X_{I})^{-1}X_{I}^T \epsilon}
\leq 
\twonorm{\left(\frac{X_{I}^T X_{I}}{n}\right)^{-1}} 
\twonorm{\frac{X_{I}^T \epsilon}{n}} \leq
\frac{\sqrt{|I|}}{\Lambda_{\min}(|I|)} \basepen 
\een 
by~\eqref{eq::eigen-cond}, where
$\basepen = \sqrt{1+a} \lambda \sigma$ for  $\lambda = \sqrt{\log p/n}$.
We now focus on bounding the first term in~\eqref{eq::appen-two-terms}.
Let $P_I$ denote the orthogonal projection onto $I$. Let 
$$c = (X_{I}^T X_{I})^{-1} X_{I}^T X_{\dropS} \beta_{\dropS}, 
\text{ hence } X_I c = P_I  X_{\dropS} \beta_{\dropS}.$$ 
By the disjointness of $I$ and $\dropS$, 
we have for $P_I  X_{\dropS} \beta_{\dropS} := X_I c$,
\ben
\nonumber
\twonorm{P_I  X_{\dropS} \beta_{\dropS}}^2 & = & 
\ip{P_I X_{\dropS} \beta_{\dropS}, X_{\dropS} {\beta_{\dropS}}} = 
\ip{X_I c, X_{\dropS} {\beta_{\dropS}}} \\
\nonumber
& \leq &
n \theta_{|I|, |\dropS|} \twonorm{c} \twonorm{\beta_{\dropS}} \; \text{ where } \\
\label{eq::c-bound0}
\twonorm{c} & \leq & \frac{\twonorm{X_I c}}{\sqrt{n \Lambda_{\min}(|I|)}} 
\leq \frac{\twonorm{P_I  X_{\dropS} \beta_{\dropS}}}
{\sqrt{n \Lambda_{\min}(|I|)}}; \text{ Hence } \\
\label{eq::c-bound}
\twonorm{P_I  X_{\dropS} \beta_{\dropS}} & \leq &
\frac{\sqrt{n} \theta_{|I|, |\dropS|}}{\sqrt{\Lambda_{\min}(|I|)}} 
\twonorm{\beta_{\dropS}} \;
\text { where } \twonorm{\beta_{\dropS}} = \twonorm{\beta_{\drop}}
\een
and $\twonorm{c} \leq {\theta_{|I|, |\dropS|}
\twonorm{\beta_{\drop}}}/{\Lambda_{\min}(|I|)}$.
Now we have on $\T_a$, by~\eqref{eq::noise-two-norm-bound-new},
\bens
\twonorm{\hat{\beta}_I - \beta_{I}} 
& \leq & 
\twonorm{(X_{I}^T X_{I})^{-1} X_{I}^T X_{\dropS} \beta_{\dropS}} +
\twonorm{(X_{I}^T X_{I})^{-1}X_{I}^T \epsilon} \\
& \leq & 
\frac{\theta_{|I|, |\dropS|}}{\Lambda_{\min}(|I|)} \twonorm{\beta_{\drop}}
+
\frac{\sqrt{|I|}}{\Lambda_{\min}(|I|)}\basepen.
\eens
Now the lemma holds given 
$\twonorm{\hat{\beta}_I - \beta}^2  = 
\twonorm{\hat{\beta}_I - \beta_{I}}^2 + \twonorm{\beta_{I} - \beta}^2$.
\end{proofof}

\begin{proofof}{\textnormal{Theorem~\ref{thm:ideal-MSE-prelude}}}
It holds by definition of $S_{\drop}$ that $I \cap S_{\drop} = \emptyset$.
It is clear by Lemma~\ref{lemma:threshold-DS} that
$|\dropS| < s$ and $|I| \leq 2s_0$ and
$|I \cup S_{\drop}| \leq |I \cup S| \leq s + s_0 \leq 2s$;
Thus for $\hat\beta_{I} = (X_I^T X_{I})^{-1} X_{I}^T Y$, we have 
for  $\lambda = \sqrt{2 \log p/n}$, and by~\eqref{eq::s0-upper-bound}
\bens
\label{eq::re-exp}
\lefteqn{
\twonorm{\hat{\beta}_I - \beta}^2
 \leq 
\twonorm{\beta_{\drop}}^2
\left(1 + \frac{2 \theta^2_{|I|, |\dropS|}}{\Lambda_{\min}^2(|I|)}\right) +
\frac{2 |I|}{\Lambda_{\min}^2(|I|)}\basepen^2 } \\
& \leq &
\lambda^2 \sigma^2 s_0
\left((\sqrt{1+a} + \tau^{-1})^2((C_0' + C_4)^2 + 1) 
\left(1 + \frac{2 \theta^2_{s, 2s_0}}{\Lambda_{\min}^2(2s_0)}\right)
+ \frac{4(1+a)}{\Lambda_{\min}^2(2s_0)}\right).
\eens
Thus the theorem holds for $C_3$ as in~\eqref{eq::DS-constants-3} 
by~\eqref{eq::s0-upper-bound}, where it holds for $\tau > 0$ that
\bens
\frac{\theta_{s, 2s_0}}{\Lambda_{\min}(2s_0)}
 \leq
\frac{\theta_{s, 2s}}{\Lambda_{\min}(2s_0)}  
 \leq
\frac{1 - \delta_{2s} - \tau}{\Lambda_{\min}(2s)} < 1
\eens
given that 
$\theta_{s, 2s} < 1 - \tau - \delta_{2s} < \Lambda_{\min}(2s)$ for $\tau > 0$.
\end{proofof}

\section{Oracle properties of the Lasso}
We first show 
Lemma~\ref{lemma:T0-pre-loss}, which gives us the prediction error
using $\beta_{T_0}$.
\begin{lemma}
\label{lemma:T0-pre-loss}
Suppose that~\eqref{eq::eigen-max} holds. We have
for $\lambda = \sqrt{(2 \log p)/n}$.
\ben
\label{eq::T0-pre-loss}
\twonorm{X \beta - X \beta_{T_0}}/\sqrt{n}  & \leq & 
\sqrt{\Lambda_{\max}(s- s_0)} \lambda \sigma \sqrt{s_0}.
\een
\end{lemma}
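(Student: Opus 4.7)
The plan is to reduce the prediction quantity to an eigenvalue bound on a submatrix whose column set has size at most $s - s_0$, and then control the $\ell_2$ norm of the truncated part of $\beta$ using the definition of $s_0$.

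First I would observe that $\beta - \beta_{T_0}$ is supported on $T_0^c$, and since $\supp(\beta) = S$ with $|S| = s$, the vector $(\beta - \beta_{T_0})$ is actually supported on $S \setminus T_0$, which has cardinality at most $s - s_0$. Hence $X\beta - X\beta_{T_0} = X_{S \setminus T_0}\, \beta_{S \setminus T_0}$, and applying the definition of $\Lambda_{\max}$ from~\eqref{eq::eigen-max} to this $(s-s_0)$-sparse vector gives
\begin{equation*}
\twonorm{X\beta - X\beta_{T_0}}^2 / n \;\leq\; \Lambda_{\max}(s-s_0)\, \twonorm{\beta_{T_0^c}}^2.
\end{equation*}

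Next I would show $\twonorm{\beta_{T_0^c}}^2 \leq s_0 \lambda^2 \sigma^2$. By the ordering~\eqref{eq::beta-order} and~\eqref{eq::beta-2-small}, for every $j > s_0$ we have $|\beta_j| < \lambda \sigma$, so $\min(\beta_j^2, \lambda^2 \sigma^2) = \beta_j^2$. Therefore
\begin{equation*}
\twonorm{\beta_{T_0^c}}^2 \;=\; \sum_{j > s_0} \beta_j^2 \;=\; \sum_{j > s_0} \min(\beta_j^2, \lambda^2 \sigma^2) \;\leq\; \sum_{i=1}^p \min(\beta_i^2, \lambda^2 \sigma^2) \;\leq\; s_0 \lambda^2 \sigma^2,
\end{equation*}
where the final inequality is the defining property~\eqref{eq::define-s0} of $s_0$.

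Combining the two displays and taking square roots yields the claimed bound. There is no real obstacle here: the only subtle point is noticing that $\beta - \beta_{T_0}$ has support contained in $S \setminus T_0$ (not just $T_0^c$), so that the relevant sparsity level for the $\Lambda_{\max}$ bound is $s - s_0$ rather than $p - s_0$; the rest is a direct application of the definition of $s_0$ together with the ordering of $|\beta_j|$.
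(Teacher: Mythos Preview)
Your proposal is correct and follows essentially the same approach as the paper's proof: write $X\beta - X\beta_{T_0} = X\beta_{T_0^c}$, apply the $\Lambda_{\max}(s-s_0)$ bound (using that $\beta_{T_0^c}$ is supported on $S\setminus T_0$), and bound $\twonorm{\beta_{T_0^c}} \leq \lambda\sigma\sqrt{s_0}$ via the definition of $s_0$ together with~\eqref{eq::beta-2-small}. Your write-up simply spells out in more detail the two facts the paper states tersely.
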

\begin{proof}
The lemma holds given that 
$\twonorm{\beta_{T_0^c}} \leq \lambda \sigma \sqrt{s_0}$, and 
$\twonorm{X \beta - X \beta_{T_0}}/\sqrt{n} 
= \twonorm{X \beta_{T_0^c}}/\sqrt{n}
\leq \sqrt{\Lambda_{\max}(s- s_0)} \twonorm{\beta_{T_0^c}}.
$
\end{proof}
We then state Lemma~\ref{lemma::h01-bound-CT}, followed by 
the proof of Theorem~\ref{thm:RE-oracle}, where 
we do not focus on obtaining the best constants.
\silent{
\begin{proofof}{\textnormal{Lemma~\ref{lemma:parallel}}}
It is sufficient to show that~\eqref{eq::parallel} holds for
$\twonorm{c}  = \twonorm{c'} = 1$.
\ben
\label{eq::parallel}
\frac{\abs{{\ip{X_I c, X_{S_{\drop}} {c'}}}}}{n} \leq  
\frac{(\Lambda_{\max}(2s) - \Lambda_{\min}(2s))}{2} \cdot \twonorm{c}  \twonorm{c'}.
\een
Indeed, by~\eqref{eq::eigen-admissible-s} and~\eqref{eq::eigen-max}
\bens
2 \Lambda_{\min}(2s) & \leq &
\frac{ \twonorm{X_I c +  X_{S_{\drop}} {c'}}^2 }{n}  
\; \leq \; 2 \Lambda_{\max}(2s) \\
2 \Lambda_{\min}(2s) & \leq & \frac{\twonorm{X_I c -  X_{S_{\drop}} {c'}}^2 }{n}  
\; \leq \; 2 \Lambda_{\max}(2s) 
\eens
Hence~\eqref{eq::parallel} follows from the parallelogram identity,
\bens
\abs{{\ip{X_I c, X_{S_{\drop}} {c'}}}} =
\frac{\twonorm{X_I c +  X_{S_{\drop}} {c'}}^2  - \twonorm{X_I c -  X_{S_{\drop}} {c'}}^2 }{4}    
\eens
\end{proofof}
}
Lemma~\ref{lemma::h01-bound-CT} is the same ( up to normalization) 
as Lemma 3.1 in~\cite{CT07}.
We note that in their original statement, the UUP condition is assumed;
a careful examination of their proof shows that it is a sufficient
but not necessary  condition; indeed we only need to assume that 
$\Lambda_{\min}(2s_0) > 0$ and $\theta_{s_0, 2s_0} < \infty$, as we show 
below. The proof is included by the end of this section 
for the purpose of a self-complete presentation.
\begin{lemma}
\label{lemma::h01-bound-CT}
Suppose $\Lambda_{\min}(2s_0) > 0$ and $\theta_{s_0, 2s_0} < \infty$.
Then
\bens
\twonorm{h_{T_{01}}} & \leq &  
\inv{\sqrt{\Lambda_{\min}(2s_0)} \sqrt{n}} 
\twonorm{X h} + \frac{\theta_{s_0, 2s_0}}{ \sqrt{s_0}\Lambda_{\min}(2s_0)}
\norm{h_{T_0^c}}_1 \\
\label{eq::RE-T0c-2-bound}
\twonorm{h_{T_{01}^c}}^2 & \leq &
\norm{h_{T_0^c}}_1^2 \sum_{k \geq s_0 + 1} 1/k^2  \; \leq \; 
\norm{h_{T_0^c}}_1^2/s_0 \; \; \text { and thus} \\
\twonorm{h} & \leq & \twonorm{h_{T_{01}}}^2 + s_0^{-1} \norm{h_{T_0^c}}^2_1
\eens
\end{lemma}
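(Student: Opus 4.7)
\textbf{Proof proposal for Lemma~\ref{lemma::h01-bound-CT}.}

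The plan is to establish the three bounds in order, using a standard decomposition of $T_0^c$ into consecutive blocks of size $s_0$ sorted by decreasing magnitude of $h$. Write $T_0^c = T_1 \cup T_2 \cup T_3 \cup \cdots$, where each $T_j$ has cardinality at most $s_0$ and $\min_{i \in T_{j-1}} |h_i| \geq \max_{k \in T_j} |h_k|$ for all $j \geq 2$; by construction $T_{01}^c = T_2 \cup T_3 \cup \cdots$.

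For the second inequality, I would not use this block decomposition directly but instead rank the entries of $h$ on $T_0^c$ in decreasing order of magnitude: $|h_{(1)}| \geq |h_{(2)}| \geq \cdots$. The key observation is that $k\cdot |h_{(k)}| \leq \sum_{i=1}^k |h_{(i)}| \leq \norm{h_{T_0^c}}_1$, so $|h_{(k)}| \leq \norm{h_{T_0^c}}_1/k$. Summing squares gives $\twonorm{h_{T_{01}^c}}^2 = \sum_{k > s_0} h_{(k)}^2 \leq \norm{h_{T_0^c}}_1^2 \sum_{k > s_0} k^{-2}$, and the integral comparison $\sum_{k > s_0} k^{-2} \leq \int_{s_0}^\infty x^{-2}\,dx = 1/s_0$ finishes this piece.

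The first (and main) bound is where the restricted-orthogonality bookkeeping lives. Since $h_{T_{01}}$ is $2s_0$-sparse, \eqref{eq::eigen-admissible-s} gives $n\,\Lambda_{\min}(2s_0)\twonorm{h_{T_{01}}}^2 \leq \twonorm{X h_{T_{01}}}^2$. I would then write $X h_{T_{01}} = Xh - X h_{T_{01}^c}$ and expand
\bens
\twonorm{X h_{T_{01}}}^2 = \ip{X h_{T_{01}}, X h} - \sum_{j \geq 2} \ip{X h_{T_{01}}, X h_{T_j}}.
\eens
Cauchy--Schwarz handles the first summand: $|\ip{X h_{T_{01}}, Xh}| \leq \twonorm{X h_{T_{01}}}\twonorm{Xh}$. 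For each cross term with $j \geq 2$, since $T_{01}$ and $T_j$ are disjoint and of sizes at most $2s_0$ and $s_0$, the definition of $\theta_{s_0,2s_0}$ yields $|\ip{X h_{T_{01}}, X h_{T_j}}| \leq n\,\theta_{s_0,2s_0}\twonorm{h_{T_{01}}}\twonorm{h_{T_j}}$. The block ordering gives $\twonorm{h_{T_j}} \leq \norm{h_{T_{j-1}}}_1/\sqrt{s_0}$ for $j \geq 2$, and summing telescopes to $\sum_{j \geq 2}\twonorm{h_{T_j}} \leq \norm{h_{T_0^c}}_1/\sqrt{s_0}$. Combining,
\bens
\twonorm{X h_{T_{01}}}^2 \;\leq\; \twonorm{X h_{T_{01}}}\twonorm{Xh} + n\,\theta_{s_0,2s_0}\twonorm{h_{T_{01}}}\cdot\frac{\norm{h_{T_0^c}}_1}{\sqrt{s_0}}.
\eens
Dividing through by $\twonorm{X h_{T_{01}}}$, substituting $\twonorm{X h_{T_{01}}} \geq \sqrt{n\Lambda_{\min}(2s_0)}\twonorm{h_{T_{01}}}$ in the denominator of the second term, and finally dividing by $\sqrt{n\Lambda_{\min}(2s_0)}$ produces exactly the stated bound.

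The third inequality is then essentially free: $T_{01}$ and $T_{01}^c$ are disjoint, so $\twonorm{h}^2 = \twonorm{h_{T_{01}}}^2 + \twonorm{h_{T_{01}^c}}^2$, and substituting the second bound gives $\twonorm{h}^2 \leq \twonorm{h_{T_{01}}}^2 + s_0^{-1}\norm{h_{T_0^c}}_1^2$ (the stated inequality appears to be missing a square on the left, but is otherwise immediate). The main technical obstacle is purely the arithmetic juggling in the first bound, in particular arranging the Cauchy--Schwarz step so that the $\sqrt{\Lambda_{\min}(2s_0)}$ (rather than $\sqrt{\Lambda_{\max}(2s_0)}$ or $\sqrt{1-\delta_{2s_0}}$) appears in the denominator, which is what allows the conclusion to rest solely on $\Lambda_{\min}(2s_0) > 0$ and $\theta_{s_0,2s_0} < \infty$ instead of on the full UUP.
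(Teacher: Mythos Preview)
Your proposal is correct and uses the same block decomposition $T_0^c = T_1 \cup T_2 \cup \cdots$ and the same ordered-coefficient bound $|h_{(k)}| \leq \norm{h_{T_0^c}}_1/k$ as the paper. The one genuine difference is in how you extract the first inequality: you expand $\twonorm{X h_{T_{01}}}^2 = \ip{X h_{T_{01}}, Xh} - \sum_{j\ge 2}\ip{X h_{T_{01}}, X h_{T_j}}$, apply Cauchy--Schwarz and restricted orthogonality, and then divide through by $\twonorm{X h_{T_{01}}}$; the paper instead introduces the orthogonal projection $P_V$ onto the span of the columns $X_j$, $j\in T_{01}$, writes $X h_{T_{01}} = P_V X h - \sum_{j\ge 2} P_V X h_{T_j}$, and uses the triangle inequality together with the bound $\twonorm{P_V X h_{T_j}} \leq \sqrt{n}\,\theta_{s_0,2s_0}\,\twonorm{h_{T_j}}/\sqrt{\Lambda_{\min}(2s_0)}$ (this is exactly the estimate from~\eqref{eq::c-bound}). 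Both routes land on the identical inequality; the projection version avoids the formal division by $\twonorm{X h_{T_{01}}}$ (harmless here, since $h_{T_{01}}=0$ makes the claim trivial), while your inner-product route is arguably more elementary because it never needs the intermediate projection bound. Your observation that the third display should read $\twonorm{h}^2$ on the left is also correct.
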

\silent{
\begin{remark}
Alternatively, we could have replaced the above bounds with
\bens
\twonorm{h_{T_{01}}} & \leq & 
\inv{\sqrt{\Lambda_{\min}(2s_0)} \sqrt{n}} \twonorm{X h} +  
\frac{\sqrt{\Lambda_{\max}(s_0)}}{\sqrt{\Lambda_{\min}(2s_0)} \sqrt{s_0}} 
\norm{h_{T_0^c}}_1.
\eens
\end{remark}
}

\begin{proofof}{\textnormal{Theorem~\ref{thm:RE-oracle}}}
Throughout this proof, we assume that $\T_a$ holds.
We use $\hat{\beta} := \beta_{\init}$ to represent the solution
to the Lasso estimator in~\eqref{eq::origin}; By the optimality of $\hat{\beta}$,
we have
\begin{eqnarray}
\label{eq::opt-lasso} 
\; \; \; \;\; 
\inv{2n} \norm{Y- X  \hat\beta}^2_2  - \inv{2n} \norm{Y- X \beta_{T_0} }^2_2 
 \leq 
\lambda_{n} \norm{\beta_{T_0}}_1 - \lambda_{n} \norm{\hat\beta}_1, \text{ where} \\
\nonumber
\; \; \; 
\twonorm{Y-X \hat \beta}^2 =
\twonorm{X \beta -X \hat \beta + \epsilon}^2 = 
\twonorm{ X \hat \beta - X \beta }^2
+ 2(\beta - \hat\beta )^T X^T \e + \norm{\e}^2_2 
\end{eqnarray} 
and similarly, we have for $\beta_0 = \beta_{T_0}$,
\begin{eqnarray*}
\label{eq::beta0-explain} 
\twonorm{Y-X  \beta_0}^2
 =  
\twonorm{X \beta -X \beta_0 + \epsilon}^2 = 
\twonorm{ X \beta - X \beta_0 }^2
+ 2(\beta - \beta_0)^T X^T \e + \norm{\e}^2_2;
\eens
Let $h = \hat{\beta} - \beta_0$.
Thus by~\eqref{eq::opt-lasso} and the triangle inequality,
we have on $\T_a$
\bens
\frac{\twonorm{ X \hat \beta - X \beta }^2}{n}
& \leq  & 
\frac{\twonorm{ X \beta - X \beta_0 }^2}{n}
+ \frac{2 h^T  X^T \e}{n} + 2 \lambda_{n}
( \norm{\beta_{0}}_1 -  \norm{h + \beta_0}_1) \\
& \leq  & 
\frac{\twonorm{ X \beta - X \beta_0 }^2}{n}
+ 2 \norm{h}_1 \norm{ \frac{X^T \e}{n}}_{\infty} + 
2 \lambda_{n} (\norm{h_{T_0}}_1 -  \norm{h_{T_0^c}}_1 )\\
& \leq  & 
\frac{\twonorm{ X \beta - X \beta_0 }^2}{n}
+ 3 \lambda_n \norm{h_{T_0}}_1 -  \lambda_n \norm{h_{T_0^c}}_1,
\eens
where we have used the fact that $\lambda_n \geq 2 \basepen$ for 
$a \geq 0$; Thus we have on $\T_a$,  
\ben
\label{eq::long-shot}
& & 
\twonorm{ X \hat \beta - X \beta }^2/n
+ \lambda_n \norm{h_{T_0^c}}_1
\leq \twonorm{ X \beta - X \beta_0 }^2/n + 3 \lambda_n  \norm{h_{T_0}}_1,
\een
which is also the starting point of our analysis on the 
oracle inequalities of the Lasso estimator.
Now we differentiate between two cases.
\begin{enumerate}
\item
Suppose that on $\T_a$,
$\twonorm{ X \beta - X \beta_0 }^2/n \geq 3 \lambda_n  \norm{h_{T_0}}_1$.
We then have that 
\ben
\label{eq::T0-error-II}
\twonorm{ X \hat \beta - X \beta }^2/n
+ \lambda_n \norm{h_{T_0^c}}_1 \leq 2 \twonorm{ X \beta - X \beta_0 }^2/n 
\een
and hence for $\lambda_n  = d_0 \lambda \sigma$, where $d_0 \geq 2$,
we have by Lemma~\ref{lemma:T0-pre-loss},
\bens
\norm{h_{T_0^c}}_1 \leq 
2 \Lambda_{\max}(s- s_0) \lambda \sigma s_0/d_0
\leq \Lambda_{\max}(s- s_0) \lambda \sigma s_0.
\eens
Now by~\eqref{eq::long-shot}, we have
\bens
 \norm{h}_1
& \leq  & 
\twonorm{ X \beta - X \beta_0 }^2/(n \lambda_n) + 4  \norm{h_{T_0}}_1 \\
& \leq  & 
7 \twonorm{ X \beta - X \beta_0 }^2/( 3 n \lambda_n)
\leq 7 \Lambda_{\max}(s- s_0) \lambda \sigma s_0/ (3d_0) \text{ and clearly} \\
\twonorm{ X h} & \leq & 
\twonorm{X \hat \beta - X \beta}  + \twonorm{X \beta - X \beta_0 }
\leq (\sqrt{2} + 1)  \twonorm{X \beta - X \beta_0 }.
\eens
By Lemma~\ref{lemma::h01-bound-CT}, we have on $\T_a$,
\bens
\twonorm{h_{T_{01}}}
& \leq &
\inv{\sqrt{n} \sqrt{\Lambda_{\min}(2s_0) }} \twonorm{X h}
+ \frac{\theta_{s_0, 2s_0}}{\Lambda_{\min}(2s_0) \sqrt{s_0}}
\norm{h_{T_0^c}}_1 \\
& \leq &
\lambda \sigma \sqrt{s_0}
\frac{ \sqrt{\Lambda_{\max}(s - s_0)}}
{\sqrt{\Lambda_{\min}(2s_0)}}
\left((\sqrt{2} + 1)
+
\frac{\theta_{s_0, 2s_0} \sqrt{\Lambda_{\max}(s - s_0)}}
{\sqrt{\Lambda_{\min}(2s_0)}}\right)   \\
& = & D \lambda \sigma \sqrt{s_0}, \text{ for } \;
D =(\sqrt{2} + 1) \frac{\sqrt{\Lambda_{\max}(s - s_0)}}{\sqrt{\Lambda_{\min}(2s_0)}}
+ \frac{\theta_{s_0, 2s_0} \Lambda_{\max}(s - s_0)}{\Lambda_{\min}(2s_0)}.
\eens
\item
Otherwise, suppose on $\T_a$, we have
$\twonorm{ X \beta - X \beta_0 }^2/n \leq 3 \lambda_n  \norm{h_{T_0}}_1$;
thus
\bens
\twonorm{ X \hat \beta - X \beta}^2/(n \lambda_n) +  
\norm{h_{T_0^c}}_1  \leq   6 \norm{h_{T_0}}_1 
\eens
and $\norm{h_{T_0^c}}_1  \leq  6 \norm{h_{T_0}}_1$, which
under the $RE(s_0, 6, X)$ condition immediately implies that 
\ben
\label{eq::T0-error-I}
\twonorm{h_{T_0}} \leq K(s_0, 6) \twonorm{X h}/\sqrt{n}.
\een
The rest of the proof is devoted to this second case.
\end{enumerate}
We use $K := K(s_0, 6)$ as a shorthand below. 
By~\eqref{eq::long-shot}, we have on $\T_a$,
\ben
\nonumber
\lefteqn{
\twonorm{ X \hat \beta - X \beta }^2/n
+ \lambda_n \norm{h_{T_0^c}}_1 - \twonorm{ X \beta - X \beta_0 }^2/n } \\
& & 
\nonumber
\leq 3 \lambda_n  \norm{h_{T_0}}_1 
\leq 3 \lambda_n  \sqrt{s_0} \twonorm{h_{T_0}}  \leq
\frac{3 K \lambda_n  \sqrt{s_0}}{\sqrt{n}} \twonorm{X h} 
\; \; (by~\eqref{eq::T0-error-I}) \\
& &
\label{eq::T0-error-exp}   
3  K \lambda_n  \sqrt{s_0}
\twonorm{X \hat \beta - X \beta}  
+ 3 K \lambda_n  \sqrt{s_0} \twonorm{X \beta - X \beta_0}/\sqrt{n} \\
& & 
\nonumber 
\leq  3 K \lambda_n  \sqrt{s_0} \twonorm{X \beta - X \beta_0} /\sqrt{n}
+ \twonorm{X \hat \beta - X \beta}^2/n + (3 K  \lambda_n  \sqrt{s_0})^2,
\een
from which the following immediately follows:
for $\lambda_n = d_0 \lambda \sigma \geq 2 \basepen$,  we have
\bens
\norm{h_{T_0^c}}_1
& \leq &  
\twonorm{ X \beta - X \beta_0 }^2/(n \lambda_n)  + 
3 K \sqrt{s_0} \twonorm{X \beta - X \beta_0}/\sqrt{n} + (3K/2)^2 \lambda_n  s_0\\
& = & 
\left(\twonorm{ X \beta - X \beta_0 }/\sqrt{n \lambda_n}  +  (3K /2)  \sqrt{\lambda_n  s_0} \right)^2  :=  D'_1 \lambda \sigma s_0 
\eens
where 
$D'_1 = (\sqrt{\Lambda_{\max}(s- s_0)/d_0} + 3 K(s_0, 6) \sqrt{d_0}/2)^2$.
Similarly, we can derive a bound on $\norm{h}_1$ from~\eqref{eq::long-shot}; we have on $\T_a$,
\bens
\nonumber
\lefteqn{
\twonorm{ X \hat \beta - X \beta }^2/n
+ \lambda_n \norm{h_{T_0^c}}_1 + \lambda_n \norm{h_{T_0}}_1 
- \twonorm{ X \beta - X \beta_0 }^2/n 
\leq 4 \lambda_n  \norm{h_{T_0}}_1 } \\
& \leq 4 &  \lambda_n  \sqrt{s_0} \twonorm{h_{T_0}} \leq  
4  K \lambda_n  \sqrt{s_0} \twonorm{X h}/\sqrt{n} \; \; (by~\eqref{eq::T0-error-I}) \\
& \leq  &  
4 K  \lambda_n  \sqrt{s_0}  \twonorm{X \beta - X \beta_0} /\sqrt{n}
+ \twonorm{X \hat \beta - X \beta}^2/n + (2 K \lambda_n  \sqrt{s_0})^2
\eens
Hence it is clear that for $\lambda_n = d_0 \lambda \sigma \geq 2 \basepen$, 
we have by  Lemma~\ref{lemma::h01-bound-CT}, on $\T_a$,
\bens
\norm{h}_1
& \leq &  
\twonorm{ X \beta - X \beta_0 }^2/(n \lambda_n)  + 
4 K \sqrt{s_0} \twonorm{X \beta - X \beta_0}/\sqrt{n} + 4 K^2  \lambda_n  s_0 \\
& = & 
\left(\twonorm{ X \beta - X \beta_0 }/\sqrt{n \lambda_n}  +  2K  \sqrt{\lambda_n  s_0} \right)^2  = D_2 \lambda \sigma s_0 
\eens
where $D_2 = (\sqrt{\Lambda_{\max}(s- s_0)/d_0} + 2 K(s_0, 6) \sqrt{d_0})^2$.
Now we derive a bound for $\twonorm{ X \hat \beta - X \beta }^2/n$; 
our starting point is~\eqref{eq::T0-error-exp}, from which by shifting items 
around and adding $(3 K \lambda_n  \sqrt{s_0})^2$ to both sides, we obtain 
\bens
\lefteqn{
 \twonorm{ X \hat \beta - X \beta }^2/n - 
3 K \lambda_n  \sqrt{s_0} \twonorm{X \hat \beta - X \beta} /\sqrt{n} + 
(3 K \lambda_n  \sqrt{s_0})^2 + \lambda_n \norm{h_{T_0^c}}_1} \\
& \leq &  \twonorm{ X \beta - X \beta_0 }^2/n +
 3K  \lambda_n  \sqrt{s_0}  \twonorm{X \beta - X \beta_0}/\sqrt{n} +
(3 K \lambda_n  \sqrt{s_0}/2)^2 
\eens
Thus we have for $\lambda_n = d_0 \lambda \sigma \geq 2 \basepen$,
\bens
\left(\inv{\sqrt{n}}{\twonorm{ X \hat{\beta} - X \beta }}
- \frac{3 K \lambda_n  \sqrt{s_0}}{2}\right)^2 
+ \lambda_n \norm{h_{T_0^c}}_1  \leq 
\left(\twonorm{ X \beta - X \beta_0 }/\sqrt{n}   
+ 3K  \lambda_n  \sqrt{s_0}/2\right)^2 
\eens
and hence
\ben
\label{eq::Xh-bound}   
\twonorm{ X \hat{\beta} - X \beta }/\sqrt{n}
& \leq &
\twonorm{ X \beta - X \beta_0 }/\sqrt{n}  + 3  K \lambda_n  \sqrt{s_0} \\
& \leq &
\nonumber
\lambda \sigma \sqrt{s_0}  \left(\sqrt{\Lambda_{\max}(s- s_0)}  + 3 d_0 K(s_0, 6)
\right).
\een
by Lemma~\ref{lemma::h01-bound-CT}.
Under $RE(s_0, 6, X)$ condition, 
we have by~\eqref{eq::Xh-bound}     
\bens
\twonorm{h_{T_0}} & \leq & K(s_0, 6) \twonorm{X h}/\sqrt{n}  \leq 
\frac{K(s_0, 6)}{\sqrt{n}}  
\left(\twonorm{X \hat \beta - X \beta}  + 
\twonorm{X \beta - X \beta_0 }\right)\\
& \leq & 
K(s_0, 6)  
\left(2 \twonorm{ X \beta - X \beta_0 }/\sqrt{n}  
+ 3  K(s_0, 6)  \lambda_n  \sqrt{s_0} \right) \\
& \leq & 
\lambda \sigma \sqrt{s_0}  K(s_0, 6) (2 \sqrt{\Lambda_{\max}(s- s_0)}  + 
3 d_0 K(s_0, 6)).
\eens
Let $T_1$ be the $s_0$ largest positions of $h$ outside of $T_0$; 
Now by a property as derived in~\cite{Zhou09c} (Proposition A.1), we also have
for $K := K(s_0, 6)$.
\bens
\twonorm{h_{T_{01}}} \leq \sqrt{2/n} K(s_0, 6) \twonorm{X h}
\leq 
\lambda \sigma \sqrt{2 s_0} K
(2 \sqrt{\Lambda_{\max}(s- s_0)}  + 3 d_0 K) \leq D_0 \lambda \sigma \sqrt{s_0} 
\eens
Moreover, we have by Lemma~\ref{lemma::h01-bound-CT},
\bens
\twonorm{\hat\beta - \beta}^2 
& \leq & 2 \twonorm{\hat \beta - \beta_{T_0}}^2 
+ 2 \twonorm{\beta - \beta_{T_0}}^2 
\leq 2 \twonorm{h}^2 + 2 \lambda^2 \sigma^2 s_0 \\
& \leq & 
2 (\twonorm{h_{T_{01}}}^2 + \norm{h_{T_0^c}}^2_1/s_0) + 
2 \lambda^2 \sigma^2 s_0
\leq 
2 \lambda^2 \sigma^2  s_0(D_0^2 + D_1^2 + 1)
\eens
We note that~\eqref{eq::pred-error-gen}
holds given~\eqref{eq::T0-error-II}  and~\eqref{eq::Xh-bound}. 
\end{proofof}
\begin{remark}
We could have bounded $\twonorm{h_{T_{01}}}$ for the second case also by 
Lemma~\ref{lemma::h01-bound-CT}; we take the form here for simplicity.
\end{remark}
\begin{proofof}{\textnormal{Lemma~\ref{lemma::h01-bound-CT}}}
Decompose $h_{T_{01}^c}$ into $h_{T_2}$, \ldots, $h_{T_K}$ 
such that $T_2$ corresponds to locations of the $s_0$ largest coefficients of 
$h_{T_{01}^c}$ in absolute values, and $T_3$ corresponds to locations of the 
next $s_0$ largest coefficients of $h_{T_{01}^c}$ in absolute values, 
and so on. 
Let $V$ be the span of columns of $X_j$, where $j \in T_{01}$, and 
$P_V$ be the orthogonal projection onto $V$.
Decompose $P_V X h$:
\bens
P_V X h & = & P_V X h_{T_{01}} + \sum_{j \geq 2} P_V X h_{T_j}
= X h_{T_{01}} + \sum_{j \geq 2} P_V X h_{T_j} , \text{ where } \\
\twonorm{P_V X h_{T_j}} 
& \leq & 
\frac{\sqrt{n} \theta_{s_0, 2s_0}}{\Lambda_{\min}(2s_0)}\twonorm{h_{T_j}}
\text{ and }
\sum_{j \geq 2} \twonorm{h_{T_j}} \leq  \norm{h_{T_0^c}}_1 /\sqrt{s_0}
\eens
see~\cite{CT07}) for details; Thus we have
\bens
\twonorm{X h_{T_{01}}} & = & 
\twonorm{P_V X h - \sum_{j \geq 2} P_V X h_{T_j} }  \leq 
\twonorm{P_V X h} + \twonorm{\sum_{j \geq 2} P_V X h_{T_j} } \\
& \leq & 
\twonorm{X h} + \sum_{j \geq 2} \twonorm{P_V X h_{T_j} } 
\leq
\twonorm{X h} +  \frac{\sqrt{n} \theta_{s_0, 2s_0}}
{\sqrt{\Lambda_{\min}(2s_0)} \sqrt{s_0}} \norm{h_{T_0^c}}_1,
\eens
where  we used the fact that $\twonorm{P_V} \leq 1$.
Hence the lemma follows given 
$\twonorm{h_{T_{01}}} \leq 
\inv{\sqrt{\Lambda_{\min}(2s_0)} \sqrt{n}}  \twonorm{X h_{T_{01}}}$.
For other bounds,
the fact that the $k$th largest value of $h_{T_0^c}$ obeys
$\size{h_{T_{0}^c}}_{(k)} \leq  \norm{h_{T_0^c}}_1 / k$ has been used; 
see~\cite{CT07}.
\end{proofof}

\section{Proofs for Lemmas in Section~\ref{SEC:GENERAL-RE}}
Let $\lambda = \sqrt{ 2 \log p/n}$. 
By definition of $s_0$ as in~\eqref{eq::define-s0}, we have
$\sum_{i=1}^p \min(\beta_i^2, \lambda^2 \sigma^2)  \leq  
s_0 \lambda^2 \sigma^2$.
We write $\beta = \beta^{(11)} +  \beta^{(12)} + \beta^{(2)}$ where
\bens
\beta_{j}^{(11)} =  \beta_{j} \cdot 1_{1 \leq j \leq a_0}, \; 
\beta_{j}^{(12)} =   \beta_{j} \cdot 1_{a_0 < j \leq s_0}, \;\text{ and } \; 
\beta^{(2)} =    \beta_{j} \cdot 1_{j > s_0}.
\eens
Now it is clear that $\sum_{j \leq a_0} \min(\beta_j^2, \lambda^2 \sigma^2) =   
a_0 \lambda^2 \sigma^2$ and hence
\ben
\label{eq::SR-range}
& & 
\sum_{j > a_0} \min(\beta_j^2, \lambda^2 \sigma^2) = 
\twonorm{ \beta^{(12)} + \beta^{(2)}}^2 \leq  (s_0 - a_0) \lambda^2 \sigma^2.
\een
\begin{proofof}{\textnormal{Lemma~\ref{lemma:threshold-general}}}
It is clear for $\drop_{11} = \drop \cap A_0$, we have 
$\drop_{11} \subset A_0 \subset T_0 \subset S$. Let
$\beta_{\drop}^{(11)} := (\beta_j)_{j \in A_0 \cap \drop}$ consist of  
coefficients of $\beta$ that are above $\lambda \sigma$ in their 
absolute values but are dropped as $\beta_{j, \init} < t_0$.
Now by~\eqref{eq::SR-range}, we have
\bens
\label{eq::drop-decompose-new}
\; \; \; \; \; 
\twonorm{\beta_{\drop}}^2 \leq  \twonorm{\beta_{\drop}^{(11)}}^2 + 
\twonorm{ \beta^{(12)} + \beta^{(2)}}^2 
\leq \twonorm{\beta_{\drop}^{(11)}}^2 + (s_0 - a_0) \lambda^2 \sigma^2,
\eens
where  $|\drop_{11}| \leq a_0$ and thus we have by the triangle inequality,
\ben
\nonumber
\twonorm{\beta_{\drop}^{(11)}}
& \leq & 
\twonorm{\beta_{\drop_{11}, \init}} + \twonorm{\beta_{\drop_{11}, \init} - 
\beta_{\drop}^{(11)}} \leq 
t_0 \sqrt{\size{\drop_{11}}} +  \twonorm{h_{\drop_{11}}}  \\
& \leq &
\label{eq::drop-bound-3}
t_0 \sqrt{a_0} +  \twonorm{h_{\drop{11}}};
\een
Thus \eqref{eq::off-beta-norm-bound-2} holds. Now
we replace the bound of $|\drop_{11}| \leq a_0$ with
$|\drop_{11}| \leq 
\frac{\twonorm{h_{\drop_{11}}}^2}{|\beta_{\min,A_0} - t_0|^2}$
in~\eqref{eq::drop-bound-3} to obtain
$$\twonorm{\beta_{\drop}^{(11)}} 
\leq t_0 \frac{\twonorm{h_{\drop_{11}}}}{\beta_{\min,A_0} - t_0}
+  \twonorm{h_{\drop_{11}}} =
\twonorm{h_{\drop_{11}}}
\frac{\beta_{\min,A_0}}{\beta_{\min,A_0} - t_0}$$
which proves~\eqref{eq::off-beta-norm-bound-alt}.
\end{proofof}

\begin{proofof}{\textnormal{Lemma~\ref{lemma:threshold-general-II}}}
Suppose $\T_a \cap Q_c$ holds.
It is clear by the choice of $t_0$ in~\eqref{eq::ideal-t0} and by 
\eqref{eq::betaA-min-cond} that
$\min_{i \in A_0} \hat{\beta}_{i} \geq
\beta_{\min, A_0}  -  \norm{h_{A_0}}_{\infty} \geq t_0$
and $\drop_{11} = \emptyset$.
Thus by~\eqref{eq::ideal-t0}, we can bound
$|I \cap T_0^c|$, depending on which one is applicable, by
$|I \cap T_0^c| \leq
 {\norm{\beta_{T_0^c, \init}}_1}/{t_0} \leq \breve{s}_0$ or by
$|I \cap T_0^c|  \leq 
 {\twonorm{\beta_{T_0^c, \init}}^2}/{t_0^2} \leq \breve{s}_0.$
Moreover, 
the bounds on $\twonorm{\hat{\beta}_I - \beta}^2$ follows immediately
from Lemma~\ref{prop:MSE-missing}, on event $\T_a$, where 
$\theta^2_{|I|, |\dropS|}$ is bounded in Lemma~\ref{lemma:parallel}
given $|I| + |\dropS| \leq s + |I \cap T_0^c| \leq s + \breve{s}_0 \leq 2s$.
\end{proofof}

{\bf Acknowledgment.}
The author thanks Larry Wasserman and Peter B\"{u}hlmann 
for helpful discussions at the early stage of this work, and 
Sara van de Geer for her positive feedback when I presented this work in 
the Workshop of the DFG-SNF Research Group at University of Bern, 
in September 2009. The author would like to thank the NIPS 2009 reviewers 
for their constructive comments, and John Lafferty and Jun Gao for their 
encouragements and support.

\bibliography{./local}

\begin{thebibliography}{49}
\expandafter\ifx\csname natexlab\endcsname\relax\def\natexlab#1{#1}\fi
\expandafter\ifx\csname url\endcsname\relax
  \def\url#1{\texttt{#1}}\fi
\expandafter\ifx\csname urlprefix\endcsname\relax\def\urlprefix{URL }\fi

\bibitem[{Adamczak et~al.(2009)Adamczak, Litvak, , Pajor and
  Tomczak-Jaegermann}]{ALPT09}
\textsc{Adamczak, R.}, \textsc{Litvak, A.~E.}, , \textsc{Pajor, A.} and
  \textsc{Tomczak-Jaegermann, N.} (2009).
\newblock Restricted isometry property of matrices with independent columns and
  neighborly polytopes by random sampling.
\newblock 0904.4723v1.

\bibitem[{Baraniuk et~al.(2008)Baraniuk, Davenport, DeVore and Wakin}]{BDDW08}
\textsc{Baraniuk, R.~G.}, \textsc{Davenport, M.}, \textsc{DeVore, R.~A.} and
  \textsc{Wakin, M.~B.} (2008).
\newblock A simple proof of the restricted isometry property for random
  matrices.
\newblock \textit{Constructive Approximation} \textbf{28} 253--263.

\bibitem[{Barron et~al.(1999)Barron, Birge and Massart}]{Barr:Birg:Mass:1999}
\textsc{Barron, A.}, \textsc{Birge, L.} and \textsc{Massart, P.} (1999).
\newblock Risk bounds for model selection via penalization.
\newblock \textit{Probability Theory and Related Fields} \textbf{113} 301--413.

\bibitem[{Bickel et~al.(2009)Bickel, Ritov and Tsybakov}]{BRT09}
\textsc{Bickel, P.~J.}, \textsc{Ritov, Y.} and \textsc{Tsybakov, A.~B.} (2009).
\newblock Simultaneous analysis of {L}asso and {D}antzig selector.
\newblock \textit{The Annals of Statistics} \textbf{37} 1705--1732.

\bibitem[{Birge and Massart(1997)}]{BM97}
\textsc{Birge, L.} and \textsc{Massart, P.} (1997).
\newblock From model selection to adaptive estimation.
\newblock In \textit{Festschrift for Lucien Le Cam.}

\bibitem[{Birge and Massart(2001)}]{BM01}
\textsc{Birge, L.} and \textsc{Massart, P.} (2001).
\newblock Gaussian model selection.
\newblock \textit{J. Eur. Math. Soc. (JEMS)} \textbf{3} 203--268.

\bibitem[{Bunea et~al.(2007{\natexlab{a}})Bunea, Tsybakov and Wegkamp}]{BTW07a}
\textsc{Bunea, F.}, \textsc{Tsybakov, A.} and \textsc{Wegkamp, M.}
  (2007{\natexlab{a}}).
\newblock Aggregation for gaussian regression.
\newblock \textit{Annals of Statistics} \textbf{35} 1674--1697.

\bibitem[{Bunea et~al.(2007{\natexlab{b}})Bunea, Tsybakov and Wegkamp}]{BTW07b}
\textsc{Bunea, F.}, \textsc{Tsybakov, A.} and \textsc{Wegkamp, M.}
  (2007{\natexlab{b}}).
\newblock Sparse density estimation with $\ell_1$ penalties.
\newblock In \textit{Proceedings of the 20th Annual Conference on Computational
  Learning Theory (COLT'07)}.

\bibitem[{Bunea et~al.(2007{\natexlab{c}})Bunea, Tsybakov and Wegkamp}]{BTW07c}
\textsc{Bunea, F.}, \textsc{Tsybakov, A.} and \textsc{Wegkamp, M.}
  (2007{\natexlab{c}}).
\newblock Sparsity oracle inequalities for the {L}asso.
\newblock \textit{The Electronic Journal of Statistics} \textbf{1} 169--194.

\bibitem[{Cai et~al.(2009)Cai, Wang and Xu}]{CWX09}
\textsc{Cai, T.}, \textsc{Wang, L.} and \textsc{Xu, G.} (2009).
\newblock Stable recovery of sparse signals and an oracle inequality.
\newblock Tech. rep., Department of Statistics, The Wharton School, University
  of Pennsylvania.

\bibitem[{Cand\`{e}s and Plan(2009)}]{CP09}
\textsc{Cand\`{e}s, E.} and \textsc{Plan, Y.} (2009).
\newblock Near-ideal model selection by .1 minimization.
\newblock \textit{Annals of Statistics} \textbf{37} 2145--2177.

\bibitem[{Cand\`{e}s et~al.(2006)Cand\`{e}s, Romberg and Tao}]{CRT06}
\textsc{Cand\`{e}s, E.}, \textsc{Romberg, J.} and \textsc{Tao, T.} (2006).
\newblock Stable signal recovery from incomplete and inaccurate measurements.
\newblock \textit{Communications in Pure and Applied Mathematics} \textbf{59}
  1207--1223.

\bibitem[{Cand\`{e}s and Tao(2005)}]{CT05}
\textsc{Cand\`{e}s, E.} and \textsc{Tao, T.} (2005).
\newblock Decoding by {L}inear {P}rogramming.
\newblock \textit{IEEE Trans. Info. Theory} \textbf{51} 4203--4215.

\bibitem[{Cand\`{e}s and Tao(2006)}]{CT06}
\textsc{Cand\`{e}s, E.} and \textsc{Tao, T.} (2006).
\newblock Near optimal signal recovery from random projections: {U}niversal
  encoding strategies?
\newblock \textit{IEEE Trans. Info. Theory} \textbf{52} 5406--5425.

\bibitem[{Cand\`{e}s and Tao(2007)}]{CT07}
\textsc{Cand\`{e}s, E.} and \textsc{Tao, T.} (2007).
\newblock The {D}antzig selector: statistical estimation when p is much larger
  than n.
\newblock \textit{Annals of Statistics} \textbf{35} 2313--2351.

\bibitem[{Chen et~al.(1998)Chen, Donoho and Saunders}]{Chen:Dono:Saun:1998}
\textsc{Chen, S.~S.}, \textsc{Donoho, D.~L.} and \textsc{Saunders, M.~A.}
  (1998).
\newblock Atomic decomposition by basis pursuit.
\newblock \textit{SIAM Journal on Scientific and Statistical Computing}
  \textbf{20} 33--61.

\bibitem[{Donoho(2006{\natexlab{a}})}]{Donoho:cs}
\textsc{Donoho, D.} (2006{\natexlab{a}}).
\newblock Compressed sensing.
\newblock \textit{IEEE Trans. Info. Theory} \textbf{52} 1289--1306.

\bibitem[{Donoho(2006{\natexlab{b}})}]{Donoho06}
\textsc{Donoho, D.} (2006{\natexlab{b}}).
\newblock For most large underdetermined systems of equations, the minimal
  $\ell_1$-norm solution is also the sparsest solution.
\newblock \textit{Communications in Pure and Applied Mathematics} \textbf{59}
  797--829.

\bibitem[{Donoho and Johnstone(1994)}]{Donoho:94}
\textsc{Donoho, D.~L.} and \textsc{Johnstone, I.~M.} (1994).
\newblock Ideal spatial adaptation by wavelet shrinkage.
\newblock \textit{Biometrika} \textbf{81} 425--455.

\bibitem[{Efron et~al.(2004)Efron, Hastie, Johnstone and Tibshirani}]{EHJ04}
\textsc{Efron, B.}, \textsc{Hastie, T.}, \textsc{Johnstone, I.} and
  \textsc{Tibshirani, R.} (2004).
\newblock Least angle regression.
\newblock \textit{Annals of Statistics} \textbf{32} 407--499.

\bibitem[{Foster and George(1994)}]{FG94}
\textsc{Foster, D.} and \textsc{George, E.} (1994).
\newblock The risk inflation criterion for multiple regression.
\newblock \textit{Ann. Statist.} \textbf{22} 1947--1975.

\bibitem[{Greenshtein and Ritov(2004)}]{GR04}
\textsc{Greenshtein, E.} and \textsc{Ritov, Y.} (2004).
\newblock Persistency in high dimensional linear predictor-selection and the
  virtue of over-parametrization.
\newblock \textit{Bernoulli} \textbf{10} 971--988.

\bibitem[{Huang et~al.(2008)Huang, Ma and Zhang}]{HMZ08}
\textsc{Huang, J.}, \textsc{Ma, S.} and \textsc{Zhang, C.-H.} (2008).
\newblock Adaptive {L}asso for sparse high-dimensional regression models.
\newblock \textit{Statist. Sinica} \textbf{18} 1603--1618.

\bibitem[{Koltchinskii(2009{\natexlab{a}})}]{Kol07}
\textsc{Koltchinskii, V.} (2009{\natexlab{a}}).
\newblock {D}antzig selector and sparsity oracle inequalities.
\newblock \textit{Bernoulli} \textbf{15} 799--828.

\bibitem[{Koltchinskii(2009{\natexlab{b}})}]{Kol09}
\textsc{Koltchinskii, V.} (2009{\natexlab{b}}).
\newblock Sparsity in penalized empirical risk minimization.
\newblock \textit{Ann. Inst. H. Poincare Probab. Statist.} \textbf{45} 7--57.

\bibitem[{Lounici(2008)}]{Lou08}
\textsc{Lounici, K.} (2008).
\newblock Sup-norm convergence rate and sign concentration property of {L}asso
  and {D}antzig estimators.
\newblock \textit{Electronic Journal of Statistics} \textbf{2} 90--102.

\bibitem[{Meinshausen and B\"{u}hlmann(2006)}]{MB06}
\textsc{Meinshausen, N.} and \textsc{B\"{u}hlmann, P.} (2006).
\newblock High dimensional graphs and variable selection with the {L}asso.
\newblock \textit{Annals of Statistics} \textbf{34} 1436--1462.

\bibitem[{Meinshausen and Yu(2009)}]{MY09}
\textsc{Meinshausen, N.} and \textsc{Yu, B.} (2009).
\newblock {L}asso-type recovery of sparse representations for high-dimensional
  data.
\newblock \textit{Annals of Statistics} \textbf{37} 246--270.

\bibitem[{Mendelson et~al.(2008)Mendelson, Pajor and
  Tomczak-Jaegermann}]{MPT08}
\textsc{Mendelson, S.}, \textsc{Pajor, A.} and \textsc{Tomczak-Jaegermann, N.}
  (2008).
\newblock Uniform uncertainty principle for bernoulli and subgaussian
  ensembles.
\newblock \textit{Constructive Approximation} \textbf{28} 277--289.

\bibitem[{Needell and Tropp(2008)}]{NT08}
\textsc{Needell, D.} and \textsc{Tropp, J.~A.} (2008).
\newblock Co{S}a{M}{P}: Iterative signal recovery from incomplete and
  inaccurate samples.
\newblock \textit{Applied and Computational Harmonic Analysis} \textbf{26}
  301--321.

\bibitem[{Needell and Vershynin(2009)}]{NV09}
\textsc{Needell, D.} and \textsc{Vershynin, R.} (2009).
\newblock Signal recovery from incomplete and inaccurate measurements via
  regularized orthogonal matching pursuit.
\newblock \textit{IEEE Journal of Selected Topics in Signal Processing, to
  appear} .

\bibitem[{Raskutti et~al.(2009)Raskutti, Wainwright and Yu}]{RWY09}
\textsc{Raskutti, G.}, \textsc{Wainwright, M.} and \textsc{Yu, B.} (2009).
\newblock Minimax rates of estimation for high-dimensional linear regression
  over $\ell_q$-balls.
\newblock In \textit{Allerton Conference on Control, Communication and
  Computer}.
\newblock Longer version in arXiv:0910.2042v1.pdf.

\bibitem[{Ravikumar et~al.(2008)Ravikumar, Wainwright and Lafferty}]{RWL08}
\textsc{Ravikumar, P.}, \textsc{Wainwright, M.} and \textsc{Lafferty, J.}
  (2008).
\newblock High-dimensional ising model selection using $\ell_1$-regularized
  logistic regression.
\newblock \textit{Annals of Statistics} To appear; Also in Technical Report
  750, Department of Statistics, U.C. Berkeley.

\bibitem[{Rudelson and Vershynin(2006)}]{RV06}
\textsc{Rudelson, M.} and \textsc{Vershynin, R.} (2006).
\newblock Sparse reconstruction by convex relaxation: Fourier and gaussian
  measurements.
\newblock In \textit{40th Annual Conference on Information Sciences and Systems
  (CISS 2006)}.

\bibitem[{Szarek(1991)}]{Szarek91}
\textsc{Szarek, S.} (1991).
\newblock Condition numbers of random matrices.
\newblock \textit{J. Complexity} \textbf{7} 131--148.

\bibitem[{Tibshirani(1996)}]{Tib96}
\textsc{Tibshirani, R.} (1996).
\newblock Regression shrinkage and selection via the {L}asso.
\newblock \textit{J. Roy. Statist. Soc. Ser. B} \textbf{58} 267--288.

\bibitem[{van~de Geer and Buhlmann(2009)}]{GB09}
\textsc{van~de Geer, S.} and \textsc{Buhlmann, P.} (2009).
\newblock On the conditions used to prove oracle results for the lasso.
\newblock \textit{Electronic Journal of Statistics} \textbf{3} 1360--1392.

\bibitem[{van~de Geer et~al.(2010)van~de Geer, B\"{u}hlmann and Zhou}]{GBZ10}
\textsc{van~de Geer, S.}, \textsc{B\"{u}hlmann, P.} and \textsc{Zhou, S.}
  (2010).
\newblock Prediction and variable selection with the adaptive lasso.
\newblock ArXiv:1001.5176v1.pdf.

\bibitem[{van~de Geer(2008)}]{vandeG08}
\textsc{van~de Geer, S.~A.} (2008).
\newblock High-dimensional generalized linear models and the {L}asso.
\newblock \textit{The Annals of Statistics} \textbf{36} 614--645.

\bibitem[{Wainwright(2009{\natexlab{a}})}]{Wai09b}
\textsc{Wainwright, M.} (2009{\natexlab{a}}).
\newblock Information-theoretic limitations on sparsity recovery in the
  high-dimensional and noisy setting.
\newblock \textit{IEEE Trans. Inform. Theory} \textbf{55} 5728--5741.

\bibitem[{Wainwright(2009{\natexlab{b}})}]{Wai08}
\textsc{Wainwright, M.} (2009{\natexlab{b}}).
\newblock Sharp thresholds for high-dimensional and noisy sparsity recovery
  using $\ell_1$-constrained quadratic programming.
\newblock \textit{IEEE Trans. Inform. Theory} \textbf{55} 2183--2202.

\bibitem[{Wasserman and Roeder(2009)}]{WR08}
\textsc{Wasserman, L.} and \textsc{Roeder, K.} (2009).
\newblock High dimensional variable selection.
\newblock \textit{The Annals of Statistics} \textbf{37} 2178--2201.

\bibitem[{Zhang and Huang(2008)}]{ZH08}
\textsc{Zhang, C.-H.} and \textsc{Huang, J.} (2008).
\newblock The sparsity and bias of the lasso selection in high-dimensional
  linear regression.
\newblock \textit{Annals of Statistics} \textbf{36} 1567--1594.

\bibitem[{Zhang(2009)}]{Zhang09}
\textsc{Zhang, T.} (2009).
\newblock Some sharp performance bounds for least squares regression with
  $\ell_1$ regularization.
\newblock \textit{Annals of Statistics} \textbf{37} 2109--2144.

\bibitem[{Zhao and Yu(2006)}]{ZY07}
\textsc{Zhao, P.} and \textsc{Yu, B.} (2006).
\newblock On model selection consistency of {L}asso.
\newblock \textit{Journal of Machine Learning Research} \textbf{7} 2541--2567.

\bibitem[{Zhou(2009{\natexlab{a}})}]{Zhou09c}
\textsc{Zhou, S.} (2009{\natexlab{a}}).
\newblock Restricted eigenvalue conditions on subgaussian random matrices.
\newblock ArXiv:0904.4723v2.

\bibitem[{Zhou(2009{\natexlab{b}})}]{Zhou09a}
\textsc{Zhou, S.} (2009{\natexlab{b}}).
\newblock Thresholding procedures for high dimensional variable selection and
  statistical estimation.
\newblock In \textit{Advances in Neural Information Processing Systems 22}. MIT
  Press.

\bibitem[{Zhou et~al.(2009)Zhou, van~de Geer and B\"{u}hlmann}]{ZGB09}
\textsc{Zhou, S.}, \textsc{van~de Geer, S.} and \textsc{B\"{u}hlmann, P.}
  (2009).
\newblock Adaptive {L}asso for high dimensional regression and gaussian
  graphical modeling.
\newblock ArXiv:0903.2515.

\bibitem[{Zou(2006)}]{Zou06}
\textsc{Zou, H.} (2006).
\newblock The adaptive {L}asso and its oracle properties.
\newblock \textit{Journal of the American Statistical Association} \textbf{101}
  1418--1429.

\end{thebibliography}

\end{document}